\documentclass{amsart}
\usepackage[T1]{fontenc}
\usepackage{amsmath}
\usepackage{amssymb}
\usepackage{amsthm}
\usepackage{mathrsfs}
\usepackage{enumitem}
\usepackage{xcolor}
\usepackage{url}
\usepackage[colorlinks=true, linkcolor=black, citecolor=black, urlcolor=black]{hyperref}
\usepackage[margin=1in]{geometry}
\usepackage{breqn}
\usepackage{mathtools}
\usepackage{adjustbox}
\usepackage[normalem]{ulem}
\usepackage{tabularx}
\usepackage{makecell}
\usepackage{tablefootnote}
\usepackage{comment}
\usepackage{tikz-cd} 
\tikzcdset{  cells={font=\everymath\expandafter{\the\everymath\displaystyle}},
} 
\usepackage{subcaption}
\usepackage{graphicx}

\newcommand{\Z}{\mathbb{Z}}
\newcommand{\Q}{\mathbb{Q}}
\newcommand{\F}{\mathbb{F}}
\renewcommand{\P}{\mathbb{P}}

\newcommand{\Y}{\mathcal{Y}}
\newcommand{\X}{\mathcal{X}}
\renewcommand{\O}{\mathcal{O}}
\newcommand{\A}{\mathbb{A}}
\newcommand{\G}{\mathbb{G}}
\renewcommand{\setminus}{\smallsetminus}
\newcommand{\cmod}[1]{\ (#1)}

\DeclareMathOperator{\lcm}{lcm}
\DeclareMathOperator{\Proj}{Proj}
\DeclareMathOperator{\Ht}{ht}
\DeclareMathOperator{\Spec}{Spec}

\newcommand{\p}{\mathfrak{p}}

\newcommand{\Op}{\O_{K,\p}}

\newcommand{\Frobp}{\mathrm{Frob}_{\p}}
\newcommand{\ProbK}{\Prob^{(K)}}
\newcommand{\Np}{N_K(\p)}

\newcommand{\Prob}{\rho}
\newcommand{\Probvals}[3]{\Prob_{#1,#2,#3}}
\newcommand{\Probzero}{\Prob_0}
\newcommand{\ProbA}{\Prob_A}
\newcommand{\ProbB}{\Prob_B}
\newcommand{\ProbC}{\Prob_C}
\newcommand{\ProbAB}{\Prob_{AB}}
\newcommand{\ProbAC}{\Prob_{AC}}
\newcommand{\ProbBC}{\Prob_{BC}}
\newcommand{\ProbBCnotpower}{\sigma}
\newcommand{\ProbvalsBCnotpower}[3]{\ProbBCnotpower_{#1,#2,#3}}

\newcommand{\glm}{g_{\ell m}}
\newcommand{\gln}{g_{\ell n}}
\newcommand{\gmn}{g_{mn}}

\newcommand{\bv}{\boldsymbol{v}}

\newcommand{\bu}{\boldsymbol{u}}

\numberwithin{equation}{section}
\numberwithin{table}{section}

\newtheorem{theorem}[equation]{Theorem}
\newtheorem{proposition}[equation]{Proposition}

\newtheorem{lemma}[equation]{Lemma}
\newtheorem{corollary}[equation]{Corollary}

\theoremstyle{definition}
\newtheorem{definition}[equation]{Definition}
\newtheorem{example}[equation]{Example}

\newtheorem{remark}[equation]{Remark}

\newcommand{\ZZ}{\mathbb{Z}}
\newcommand{\Gm}{\mathbb{G}_{m}}
\newcommand{\PP}{\mathbb{P}}
\newcommand{\PPP}{\mathcal{P}}
\renewcommand{\AA}{\mathbb{A}}
\newcommand{\OO}{\mathcal{O}}
\DeclareMathOperator{\sep}{sep}
\DeclareMathOperator{\Stab}{Stab}

\newcommand{\Rng}{R}
\newcommand{\x}{x}
\newcommand{\y}{y}
\newcommand{\z}{z}
\renewcommand{\l}{\ell}
\newcommand{\m}{m}
\newcommand{\n}{n}

\renewcommand{\m}{m}

\newcommand{\bfw}{\mathbf{w}}
\newcommand{\gfe}{A\x^{\l} + B\y^{\m} + C\z^{\n}}
\newcommand{\RR}{\mathcal{R}}
\newcommand{\UU}{\mathcal{U}}

\DeclarePairedDelimiter\abs{\lvert}{\rvert}

\newcommand{\ex}{\ell}
\newcommand{\ey}{m}
\newcommand{\ez}{n}
\newcommand{\Xcrs}{X}

\newcommand{\ptX}{P}
\newcommand{\ptY}{Q}
\newcommand{\ptcrs}[1]{\abs{#1}}

\newcommand{\canonicaldiv}{K}
\renewcommand{\L}{L}
\newcommand{\f}{f}
\renewcommand{\k}{k}
\newcommand{\K}{K} 

\newcommand{\pcond}[1]
{#1}
\newcommand{\expcond}[1]
{#1}

\title{On $p$-adic solubility of $Ax^\ell + By^m + Cz^n = 0$}

\author[C. Keyes]{Christopher Keyes}
\address{Christopher Keyes, Center for Communications Research, Princeton, NJ, USA}
\email{ckeyes.math@gmail.com}
\urladdr{\url{https://c-keyes.github.io}}

\author[A. Kobin]{Andrew Kobin}
\address{Andrew Kobin, Center for Communications Research, La Jolla, CA, USA}
\email{ajkobinmath@gmail.com}
\urladdr{\url{https://www.andrewkobin.com}}

\makeatletter
\let\@wraptoccontribs\wraptoccontribs
\makeatother

\contrib[with an appendix by]{Santiago Arango-Pi\~{N}Eros, Christopher Keyes and Andrew Kobin}
\address{Santiago Arango-Pi\~neros, Department of Mathematics, University of Massachusetts Amherst, Amherst, MA, USA}
\email{santiago.arango.pineros@gmail.com}
\urladdr{\url{https://sarangop1728.github.io}}

\subjclass{11D41, 11D88, 14D10, 14G05}

\keywords{Generalized Fermat equations, local solubility, fibrations}

\begin{document}

\begin{abstract}
    We study $p$-adic solubility of generalized Fermat equations $Ax^\ell + By^m + Cz^n = 0$ for positive integers $\ell,m,n$. For all but finitely many primes $p$, the probability of having a $p$-adic solution is described by a rational function in $p$ depending only on $\gcd(p-1,\ell,m)$, $\gcd(p-1,\ell,n)$, and $\gcd(p-1,m,n)$. When $\ell,m,n$ are pairwise coprime, we deduce that the proportion of these equations which are everywhere locally soluble is positive, given by a product of these local probabilities; when $\ell,m,n$ are not pairwise coprime, the proportion is 0\%. We then give several detailed examples demonstrating the explicit nature of the results.
\end{abstract}

\maketitle

\section{Introduction}
\label{sec:intro}

Fix $\ell,m,n \geq 1$. For nonzero integers $A,B,C$, the Diophantine equation
\begin{equation}\label{eq:gfe}
	Ax^\ell + By^m + Cz^n = 0
\end{equation}
is known as a \textit{generalized Fermat equation}. Of particular interest are the \textit{primitive} integer solutions to \eqref{eq:gfe}, i.e.\ $(x,y,z)$ with $xyz \neq 0$ and $\gcd(x,y,z) = 1$. A necessary condition for \eqref{eq:gfe} to have any primitive integer solutions is for it to have primitive $p$-adic solutions for all primes $p$. That is, for all $p$ there must exist $(x,y,z) \in \Z_p^3 \setminus (p\Z_p)^3$ satisfying \eqref{eq:gfe}.

Our first main result is that for all but finitely many primes $p$, the probability $\Prob_{\ell,m,n}(p)$ of \eqref{eq:gfe} having a primitive $\Z_p$-solution is computed by one of finitely many rational functions in $p$ (see Definition \ref{def:rho} for a precise definition). Moreover, this rational function expression depends only on $\gcd(p-1,\ell,m)$, $\gcd(p-1,\ell,n)$, and $\gcd(p-1,m,n)$. 

\begin{theorem}[see Theorem \ref{thm:rat_funcs_body}]\label{thm:rat_funcs}
    Fix $\ell,m,n \geq 1$ and set $\glm = \gcd(\ell,m)$, $\gln = \gcd(\ell,n)$, and $\gmn = \gcd(m,n)$. Suppose we have divisors $i \mid g_{\ell m}$, $j \mid g_{\ell n}$, and $k \mid g_{mn}$. There exists a rational function $R_{i,j,k}(t) \in \Q(t)$ such that for all primes $p$ satisfying \pcond{$p \nmid \glm\gln\gmn$} and 
    \[\pcond{p+1-\left (1 + \frac{1}{2}\left(\frac{\ell mn}{\lcm(\ell,m,n)} - (\glm + \gln + \gmn)\right)\right)\lfloor{2 \sqrt{p}} \rfloor - \max(i,j,k) > 0}, \]
    we have $\Prob_{\ell,m,n}(p) = R_{i,j,k}(p)$ whenever $i = \gcd(p-1,g_{\ell m}),\ j = \gcd(p-1,g_{\ell n}),\ k = \gcd(p-1,g_{mn})$.
\end{theorem}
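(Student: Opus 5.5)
The plan is to compute $\Prob_{\ell,m,n}(p)$ as the Haar measure of the set of $(A,B,C)\in\Z_p^3$ for which \eqref{eq:gfe} has a primitive solution (Definition \ref{def:rho}), by stratifying the coefficient space according to valuations. Write $A=p^a u$, $B=p^b v$, $C=p^c w$ with $u,v,w\in\Z_p^\times$. Substituting $x\mapsto p^{s}x$, $y\mapsto p^{t}y$, $z\mapsto p^{r}z$ and dividing by a common power of $p$ shows that whether a primitive solution exists depends only on the residues $(a \bmod \ell,\, b\bmod m,\, c \bmod n)$, up to a finite amount of bookkeeping about which variables are forced to be units, together with the unit parts $u,v,w$. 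Summing the measures $p^{-a-b-c}(1-p^{-1})^3$ over each residue class gives geometric series in $p^{-1}$. So $\Prob_{\ell,m,n}(p)$ is a finite $\Q(p)$-linear combination of conditional probabilities $\Prob(\text{soluble}\mid a,b,c)$. It therefore suffices to show that each of these is a rational function of $p$ depending only on $(i,j,k)$.

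I would classify the valuation strata into three types.

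\textbf{(1) Unit type.} After normalization all three coefficients are units. Here I would show the conditional probability is $1$ under the hypothesis. The reduced curve $\bar u x^\ell+\bar v y^m+\bar w z^n=0$ over $\F_p$ has a (possibly singular) projective model. The quantity $1+\tfrac12(\ell mn/\lcm(\ell,m,n)-(\glm+\gln+\gmn))$ bounds the genus-type constant appearing in Weil's bound for its components. Hence it has more than $\max(i,j,k)$ points, which is the maximum number of points with one coordinate zero that must be excluded. This yields an $\F_p$-point with $xyz\neq0$.

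Since $p\nmid\glm\gln\gmn$, $p$ divides at most one of $\ell,m,n$. So at such a point at least one partial derivative $\ell\bar u x^{\ell-1}$, $m\bar v y^{m-1}$, $n\bar w z^{n-1}$ is nonzero, and Hensel's lemma in that variable lifts the point to a primitive $\Z_p$-solution.

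\textbf{(2) Two-unit type.} Exactly two coefficients are units after normalization, say $A,B$, and $p\mid C$ in a way that forces $z\equiv0$ or makes the $C$-term negligible modulo $p$. Solubility then reduces to $\bar u x^\ell+\bar v y^m=0$ having a solution with $x,y\in\F_p^\times$. That is, $-\bar v/\bar u$ must lie in the subgroup $(\F_p^\times)^{\ell}(\F_p^\times)^{m}=(\F_p^\times)^{\gcd(\ell,m,p-1)}$, which has index $i$. Hensel lifting again applies because $p$ cannot divide both $\ell$ and $m$. This gives conditional probability $1/i$, or the analogous $1/j$, $1/k$, possibly times an explicit factor coming from whether $z$ may be a unit.

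\textbf{(3) One-unit or degenerate type.} A single dominant coefficient forces the corresponding variable into $p\Z_p$. Then either primitivity fails (probability $0$), or one rescales and lands in an earlier type.

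Assembling these pieces, $R_{i,j,k}(t)$ is obtained by replacing $p$ by $t$ in the resulting expression. Its coefficients involve only $1$, $1/i$, $1/j$, $1/k$ and the combinatorics of residue classes modulo $\ell,m,n$, which are independent of $p$.

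I expect two steps to be the main obstacle. The first is the exhaustive and correct bookkeeping of the valuation strata, especially those where the reduction moves between the three types and where $p$ divides one exponent, so that Hensel must be applied in a specific variable. The second is verifying that the displayed inequality is exactly what Weil's bound requires in the unit type, including possible reducibility of the curve when the exponents share factors. For the latter I would first decompose the curve over $\overline{\F}_p$ via the Fermat cover and count on a geometrically irreducible component defined over $\F_p$, which must exist whenever the relevant gcd index divides appropriately. Failing that, I would fall back on a Jacobi-sum character-sum estimate for the number of solutions with $xyz\neq0$.
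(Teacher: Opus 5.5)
Your central reduction is false: for these stacky curves, the existence of a primitive $\Z_p$-solution is \emph{not} determined by $(v_p(A)\bmod \ell,\, v_p(B)\bmod m,\, v_p(C)\bmod n)$ together with the unit parts. Replacing $C$ by $p^nC$ matches primitive solutions of the new equation with those solutions $(x,y,z')$ of the old one having $p\mid z'$ and $(x,y,z'/p)$ primitive. That is a different condition from primitivity of $(x,y,z')$; this is the failure of the valuative criterion for $\X_{A,B,C}$. Your reduction is fine when $\X_{A,B,C}$ is a scheme, e.g.\ $\ell=m=n$, where $\X_{A,B,C}(\Z_p)=X_{A,B,C}(\Q_p)$, but not in general. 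Concretely, take $(\ell,m,n)=(2,2,3)$, $p$ odd, and units $A,B$ with $-B/A$ a non-square mod $p$. If $C=pC_0$ there is no primitive solution, since $Ax^2+By^2\equiv 0$ forces $p\mid x,y$ and then $p\mid z$. If instead $C=p^4C_0$, then $(p^2x_1,0,z)$ with $x_1,z\in\Z_p^\times$ and $Ax_1^2+C_0z^3=0$ (Lemma \ref{lem:intermediate}) is primitive, although $4\equiv 1\pmod 3$. In fact Lemma \ref{lem:probC_l=m} shows that for $\ell=m$ the dependence on $v_p(C)$ has period $m$, not $n$. Valuation is moved between coefficients by substituting in the \emph{other} variables: e.g.\ Lemma \ref{lem:p_divides_one}(iii) sends valuations $(a,0,0)$ to $(a-m,0,n-m)$ while imposing $p\nmid x$. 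For the same reason your type (2) values $1/i,1/j,1/k$ are wrong in general. When the unit parts fail the power-residue test, solutions with $p$ dividing two coordinates may still exist. For instance, for $p\nmid BC$ and $v_p(A)=a\geq m$ the conditional probability is $\frac1k+\left(1-\frac1k\right)\ProbvalsBCnotpower{=a-m}{=0}{=n-m}^{(x)}$ (Lemma \ref{lem:probA}).

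What is missing is a correct finite set of states and a way to handle cycles among them. The paper's states record which coordinate must be a unit ($\Prob^{(x)}$, $\Prob^{(x\text{ or }y)}$) and whether the unit parts have already failed a power-residue test (the $\ProbBCnotpower$-probabilities). The reductions of Lemmas \ref{lem:probA}, \ref{lem:probAB}, \ref{lem:auxiliary} do not always land in an ``earlier type''; they are cyclic. For $(2,2,n)$ with $n$ even, $\ProbvalsBCnotpower{\geq 1}{=0}{=0}^{(x)}$ is expressed through $\ProbvalsBCnotpower{\geq 1}{=0}{=n-2}^{(x)}$, whose chain of relations leads back to $\ProbvalsBCnotpower{\geq 1}{=0}{=0}^{(x)}$. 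So you cannot just sum geometric series over residue classes. Instead the paper writes the relations as $M\bv=\bu_{i,j,k}$, where $M=I-N$ has entries in $\Q(p)$ independent of $(i,j,k)$, and shows $M$ is invertible (Lemma \ref{lem:matrix}). It notes that $\bu_{i,j,k}$ depends on $(i,j,k)$ only through $1/i,1/j,1/k$, and solving and substituting into \eqref{eq:rho} gives $R_{i,j,k}$.

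Two smaller points. First, the hypothesis with $\max(i,j,k)$ does not give an $\F_p$-point with $xyz\neq0$, since that could require excluding $i+j+k$ points. It gives, separately, points with $x\neq0$, with $y\neq0$, and with $z\neq0$. That is exactly what the recursion needs, namely $\Probvals{=0}{=0}{=0}^{(x)}=\Probvals{=0}{=0}{=0}^{(y)}=\Probvals{=0}{=0}{=0}^{(z)}=1$. Second, no component analysis or Jacobi sums are needed. For $p\nmid ABC\glm\gln\gmn$ the coarse curve is smooth and geometrically irreducible, with genus exactly the constant in the hypothesis.
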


The proof of Theorem \ref{thm:rat_funcs} gives an effective algorithm to compute $\Prob_{\ell,m,n}(p)$ explicitly for all but finitely many primes $p$;  implementation is available on GitHub \cite{github_gfedensity}. The remaining primes can often be handled via an ad hoc approach. 

\begin{example}[see \S\ref{subsec:332}]
Suppose $(\ell,m,n) = (3,3,2)$. For $p > 3$, we have $(i,j,k) = (3,1,1)$ or $(1,1,1)$, corresponding to $p \equiv 1 \pmod{3}$ and $p \equiv 2 \pmod{3}$, respectively. Following the proof of Theorem \ref{thm:rat_funcs_body}, we compute 
\begin{align*}
    R_{3,1,1}(t) &= 1 - \frac{2  t^{11} + 4  t^{10} + 14  t^{9} + 10  t^{8} + 14  t^{7} + 8  t^{6} + 4  t^{5} + 4  t^{4} - 8  t^{2} - 6  t - 6}{3  {\left(t^{5} + t^{4} + t^{3} + t^{2} + t + 1\right)} {\left(t^{2} + t + 1\right)}^{2} {\left(t^{2} + 1\right)} t}, \\
    R_{1,1,1}(t) &= 1 - \frac{4  t^{9} + 2  t^{8} + 4  t^{7} + 2  t^{6} + 2  t^{4} - 2  t^{2} - 2  t - 2}{{\left(t^{5} + t^{4} + t^{3} + t^{2} + t + 1\right)} {\left(t^{2} + t + 1\right)}^{2} {\left(t^{2} + 1\right)} t}.
\end{align*}
Then Theorem~\ref{thm:rat_funcs_body} shows that $\Prob_{3,3,2}(p) = R_{i,j,k}(p)$ for all \pcond{$p > 3$}. In \S \ref{subsubsec:332_p=3}, we also show that $\Prob_{3,3,2}(2) = R_{1,1,1}(2)$ and adapt the methods appropriately for $p=3$. Thus we conclude
\begin{equation*}
    \Prob_{3,3,2}(p) = \begin{cases}
        1 - \frac{2  p^{11} + 4  p^{10} + 14  p^{9} + 10  p^{8} + 14  p^{7} + 8  p^{6} + 4  p^{5} + 4  p^{4} - 8  p^{2} - 6  p - 6}
        {3  {\left(p^{5} + p^{4} + p^{3} + p^{2} + p + 1\right)} {\left(p^{2} + p + 1\right)}^{2} {\left(p^{2} + 1\right)} p}
        & p \equiv 1 \pmod{3}, \\
        1 - \frac{4  p^{9} + 2  p^{8} + 4  p^{7} + 2  p^{6} + 2  p^{4} - 2  p^{2} - 2  p - 2}
        {{\left(p^{5} + p^{4} + p^{3} + p^{2} + p + 1\right)} {\left(p^{2} + p + 1\right)}^{2} {\left(p^{2} + 1\right)} p}
        & p \equiv 2 \pmod{3},\\
        \frac{424871}{461370} & p=3.
    \end{cases}
\end{equation*}
\end{example}

\begin{remark}
    For fixed $\ell,m,n$ and a prime $p$, the corresponding triple $(i,j,k)$ is completely determined by how $p$ splits in the cyclotomic extension $\Q(\zeta)$, where $\zeta$ is a primitive $\lcm(\glm,\gln,\gmn)$-th root of unity.
    
	More generally, in \S\ref{subsec:NFs} we discuss an analogue of Theorem~\ref{thm:rat_funcs} that holds for a prime ideal $\p$ in an arbitrary number field $K$. For all but finitely many $\p$, the probability of \eqref{eq:gfe} having a primitive $\p$-adic integral solution is given by a rational function expression in the norm $N_K(\p)$, whose coefficients depend on the splitting type of $\p$ in the cyclotomic extension $K(\zeta)/K$. See Theorem \ref{thm:rat_funcs_NFs} and the preceding discussion for details.
    
	Much of this article can be read with a number field $K$ in place of $\Q$ and a prime ideal $\p$ in place of a rational prime $p$. For simplicity and accessibility, we choose to develop everything over $\Q$ until \S\ref{subsec:NFs}. 
\end{remark}

The special case of $(\ell,m,n) = (n,n,n)$ was addressed by Hirakawa and Kanamura, who describe more generally how to compute the probability of local solubility for diagonal hypersurfaces of degree $n$ in projective space \cite{HirakawaKanamura}. The explicit probabilities produced by Theorem \ref{thm:rat_funcs} recover their results \cite[Theorems 1.3(1) and 1.4(1)]{HirakawaKanamura}; see \S \ref{subsec:nnn} for a discussion.

The special case of $(\ell,m,n) = (2,2,n)$ for $n$ odd was also considered in a recent preprint due to Duque-Rosero, Roy, Sankar, Wang, and the authors, where a local solubility probability was obtained for \eqref{eq:gfe} with $A=1$ and $p \neq 2$ \cite[Remark 7.2]{DRKKRSW_22n}. In \S \ref{subsec:22n}, we give closed form expressions for $\Prob_{2,2,n}(p)$ for $p \neq 2$.

Theorem \ref{thm:rat_funcs} and the explicit probabilities it produces fit into a wider landscape of studying solubility in families, many of which arise from fibrations. A modern framework for these types of questions has been developed by several authors; see e.g.\ \cite{LoughranSmeets, BBL,LoughranSofos,LoughranRomeSofos2022,LoughranMatthiesen}. 

Fix exponents $(\ell,m,n)$. We say \eqref{eq:gfe} is \textit{everywhere locally soluble} if it has real solutions and primitive $\Z_p$-solutions for all primes $p$. For a rational point in the projective plane $P \in \P^2(\Q)$, write $P = [A:B:C]$ for integers $A,B,C$ with $\gcd(A,B,C)=1$ and let $\Ht(P) = \max\{|A|,|B|,|C|\}$ denote the usual height function on $\P^2(\Q)$. Note that the solubility of \eqref{eq:gfe} is invariant under scaling $A,B,$ and $C$, so it is well defined for any representative of $P \in \P^2(\Q)$.

For any real number $T > 0$, let
\begin{align*}
    N_{\ell,m,n}(T) &= \#\left\{ P \in \P^2(\Q) : \Ht(P) \leq T,\ \eqref{eq:gfe} \text{ has a primitive } \Z\text{-solution} \right\},\\
    N_{\ell,m,n}^{\mathrm{loc}}(T) &= \#\left\{ P \in \P^2(\Q) : \Ht(P) \leq T,\ \eqref{eq:gfe} \text{ is everywhere locally soluble} \right\}.
\end{align*}
We prove that the natural density of $(A,B,C)$ for which \eqref{eq:gfe} is everywhere locally soluble is positive when the exponents $\ell,m,n$ are pairwise coprime, and is 0 otherwise.

\begin{theorem}\label{thm:stats}
    The asymptotic growth of $N_{\ell,m,n}^{\mathrm{loc}}(T)$ can be described as follows.
    \begin{enumerate}[label = (\roman*)]
        \item If \expcond{$\ell,m,n$ are pairwise coprime},
        \[\frac{N_{\ell,m,n}^{\mathrm{loc}}(T)}{\#\{P \in \P^2(\Q) : \Ht(P) \leq T\}} \sim \prod_p \Prob_{\ell,m,n}(p) > 0.\]
        \item If instead \expcond{$\ell,m,n$ are not pairwise coprime},
        \[\limsup_{T \to \infty} \frac{N_{\ell,m,n}^{\mathrm{loc}}(T)}{\#\{P \in \P^2(\Q) : \Ht(P) \leq T\}} = 0.\]
    \end{enumerate}
\end{theorem}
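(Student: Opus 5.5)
The plan is to place this in the framework of local solubility in families, following Loughran--Smeets \cite{LoughranSmeets} and Bright--Browning--Loughran \cite{BBL}. Consider the family over $\P^2$ whose fibre over $[A:B:C]$ is the curve \eqref{eq:gfe}, compactified via a suitable weighted projective or stacky model. Part (i) will follow from the general principle (e.g.\ \cite[Theorem 1.3]{BBL}) that if the family has no non-pseudo-split fibres in codimension one, then the density of everywhere locally soluble members exists and equals the product of local densities. Part (ii) will come from the Loughran--Smeets upper bound: a non-pseudo-split fibre over a divisor forces density zero.

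\textbf{Part (i).} Assume $\ell,m,n$ are pairwise coprime. The codimension-one singular fibres lie over the lines $A=0$, $B=0$, $C=0$. Over $A=0$ the fibre is $By^m + Cz^n = 0$ together with the free variable $x$. Since $\gcd(m,n)=1$, this fibre contains a geometrically integral component; for instance $(y,z)=(t^n u, t^m v)$-type parametrisations give a rational curve defined over the base field. Hence the fibre is split, and in particular pseudo-split. The other two lines are handled the same way.

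The BBL sieve then gives
\[
\frac{N^{\mathrm{loc}}_{\ell,m,n}(T)}{\#\{P : \Ht(P)\le T\}} \to \prod_v \Prob_v .
\]
The real density is $1$, because some exponent is odd or a sign can be chosen freely; more precisely, with pairwise coprime exponents at most one of them is even, so a real point always exists.

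For positivity we need three things:
\begin{itemize}
\item $\Prob_{\ell,m,n}(p) > 0$ for every $p$, which holds since e.g.\ $A=-B-C$ gives the point $(1,1,1)$ on an open set of coefficients;
\item $\Prob_{\ell,m,n}(p) = 1 - O(p^{-2})$ for large $p$;
\item convergence of the product.
\end{itemize}
The second point follows from Theorem~\ref{thm:rat_funcs}. Here $i=j=k=1$, and failure of solubility requires $p$ to divide at least two of $A,B,C$, or some similar codimension-two condition. Hasse--Weil (Lang--Weil) supplies points whenever $p\nmid ABC$, and Hensel lifting handles the case $p \,\|\, A$ using a smooth point of $By^m+Cz^n=0$ over $\F_p$.

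\textbf{Part (ii).} Suppose instead that $d=\gcd(m,n)>1$. Then over $A=0$ the fibre is $By^m+Cz^n=0$, i.e.\ $\prod_{\zeta}(B'y^{m/d}-\zeta z^{n/d})$ up to the choice of a $d$-th root of $-C/B$. Its components are permuted by the Galois group of the function field extension $\Q(A,B,C)\big((-C/B)^{1/d}\big)$ restricted to the generic point of $A=0$. No component is fixed by every element of this Galois group, so the fibre is not pseudo-split.

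Loughran--Smeets \cite[Theorem 1.2]{LoughranSmeets} then gives
\[
N^{\mathrm{loc}} \ll \frac{T^3}{(\log T)^{\Delta}}, \qquad \Delta > 0,
\]
which is $o(T^3)$.

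Concretely, this says that for a positive-density set of primes $p$ (those with $p\equiv 1 \bmod q$ for a prime $q\mid d$), having $p\,\|\, A$ with $-C/B$ not a $q$-th power mod $p$ obstructs solubility. I then need to check that the remaining residual cases, such as $p\mid x$ or $x$ a unit, indeed admit no primitive point. Since $p\,\|\,A$ and $p\nmid BC$, we get $p\mid By^m+Cz^n$ with $y,z$ units. If $-C/B$ is not a $q$-th power mod $p$ this is impossible, so $y,z\equiv 0$, which forces $p^{\min(m,n)}\mid Ax^\ell$ and then $p\mid x$, contradicting primitivity.

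This elementary argument could even replace the Loughran--Smeets citation via a large sieve. For each such $p$ it gives a failure density of about $c/p$, and $\sum 1/p$ over primes $p\equiv 1 \bmod q$ diverges, so the product of local densities is $0$.

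\textbf{Main obstacle.} The hardest step will be the uniform tail estimate in (i). The geometric sieve needs the codimension-two contribution (primes dividing two coefficients) to be controlled uniformly, and the existence of the limit requires Ekedahl-type tail bounds. I would apply the BBL theorem directly after verifying its hypotheses. This requires a smooth proper model of the family, so I would construct the model as a root-stack / weighted projective bundle and check that the fibre over each line is pseudo-split.
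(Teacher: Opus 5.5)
\textbf{Gap in part (i).} The gap is the global step of part (i). \cite[Theorem 1.3]{BBL} and the split/pseudo-split formalism concern $\Q_p$-points on fibres of a morphism of \emph{varieties}. The local factors they produce are densities of $\Q_p$-solubility of those fibres, not $\Prob_{\ell,m,n}(p)$.

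Take the natural scheme model, the coarse family $X_{A,B,C}$. For pairwise coprime exponents, every fibre with $ABC\neq 0$ contains the rational point $[0:(C/B)^\alpha:(-B/C)^\beta]$ from Lemma~\ref{lem:intermediate}. So that theorem would return density $1$, whereas $\prod_p\Prob_{2,3,5}(p)\approx 0.782$.

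The difference between primitive $\Z_p$-solutions and $\Q_p$-points is exactly the failure of the valuative criterion for the stack $\X_{A,B,C}$. There is no integral-point or stacky version of \cite[Theorem 1.3]{BBL} that you could invoke after checking pseudo-splitness on a root-stack model. Your ``split fibre'' verification over $A=0$ is therefore not the relevant input.

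\textbf{What is needed instead.} Apply the abstract sieve \cite[Proposition 3.4]{BBL} directly to $\Omega_p=\{(A,B,C):\X_{A,B,C}(\Z_p)\neq\emptyset\}$. This requires two verifications your plan omits:
\begin{enumerate}
\item[(a)] $\mu_p(\partial\Omega_p)=0$. The paper gets this from semialgebraic geometry. Alternatively, $\Omega_p$ is closed, being the projection of a compact set, and it is open off $ABC=0$ by Hensel's lemma.
\item[(b)] The tail condition \eqref{eq:tech_cond}.
\end{enumerate}
For (b) you essentially have the local input already. By Lemma~\ref{lem:intermediate}, for pairwise coprime exponents $(A,B,C)\in\Omega_p$ for \emph{every} $p$ as soon as $p$ divides at most one of $A,B,C$, with any valuation and not only $p\,\|\,A$. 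So every exceptional triple has two coordinates divisible by some $p>M$. There are $O(T^3/M)+o(T^3)$ such triples in $[-T,T]^3$.

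Positivity then follows from $\mu_p(\Omega_p)>0$ and $1-\Prob(p)=O(p^{-2})$. For the first, note that $\{A=-B-C\}$ is a null set, so you need a Hensel neighbourhood of it, for instance $A\in\Z_p^\times$ and $A+B+C\equiv 0 \pmod{p^{2v_p(\ell)+1}}$.

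\textbf{Part (ii).} Your elementary argument is correct. For $p\equiv 1 \pmod q$, $p\,\|\,A$, $p\nmid BC$ and $-C/B\notin(\F_p^\times)^q$, there is no primitive $\Z_p$-solution. This is a congruence condition modulo $p^2$ of measure $\asymp(1-1/q)/p$. The Chinese remainder theorem over the finitely many primes $p<M$ then bounds the $\limsup$ by a constant times $\prod_{p<M,\ p\equiv 1\ (q)}(1-c/p)$, which tends to $0$ by Dirichlet; no large sieve is needed.

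This is the paper's argument in a slightly more direct form. The paper instead bounds $\Prob(p)$ via Lemma~\ref{lem:probA} and uses Mertens in progressions.

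Your Loughran--Smeets citation is legitimate only if applied to the coarse family. It then yields an upper bound, because $\X_{A,B,C}(\Z_p)\neq\emptyset$ implies $X_{A,B,C}(\Q_p)\neq\emptyset$.
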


As with Theorem~\ref{thm:rat_funcs}, these statements admit generalizations to number fields; see \S\ref{subsec:NFs}. 

To date, most of the families considered in the literature arise from fibrations of varieties over affine or projective space, but one may also pose various solubility questions for fibrations of stacks, as in \cite{DRKKRSW_22n}. Theorem~\ref{thm:stats} can be interpreted as a partial answer to the local solubility question for fibrations of stacky curves determined by generalized Fermat equations; see \S\ref{subsec:GFEs} and \cite[\S 2]{DRKKRSW_22n} for more details on this geometric interpretation. 

\begin{example}[see \S \ref{subsec:numerics}]
    Suppose $(\ell,m,n) = (2,3,5)$, which are pairwise coprime. Then we have
    \[0.78233 \leq \lim_{T \to \infty} \frac{N_{2,3,5}^{\mathrm{loc}}(T)}{\#\{P \in \P^2(\Q) : \Ht(P) \leq T\}} \leq 0.78237.\]
    That is, the probability (in the sense of density of $[A:B:C] \in \P^2(\Q)$) of \eqref{eq:gfe} being everywhere locally soluble is about 78.2\%. This is deduced from Theorem \ref{thm:stats} and an explicit description of $\Prob_{2,3,5}(p)$ obtained by Theorem \ref{thm:rat_funcs_body}. 
    
    Darmon and Granville \cite[p.\ 540]{DarmonGranville} speculate on whether everywhere local solubility suffices to guarantee existence of a primitive integral solution\footnote{That is, they ask whether the Hasse principle for integral points holds for \eqref{eq:gfe} when $(\ell,m,n) = (2,3,5)$.} in this case. A resolution to their question could yield a precise asymptotic formula for $N_{2,3,5}(T)$.
\end{example}

The proof of Theorem~\ref{thm:stats}(i) uses the sieve of Ekedahl, essentially following the argument in \cite[Theorem 1.3]{BBL} with minor modification. In the special case of $(\ell,m,n) = (n,n,n)$, much more is known than Theorem~\ref{thm:stats}(ii). Browning and Dietmann gave asymptotic upper bounds for $N_{n,n,n}^{\mathrm{loc}}(T)$ \cite{BrowningDietmann} which were recently supplanted by Koymans, Paterson, Santens, and Schute \cite{KPSS_locsolfermat}, who determine an asymptotic
\[N_{n,n,n}^{\mathrm{loc}}(T) \sim c_n T^3 (\log T)^{3\alpha_n - 3}\]
for explicitly described constants $\alpha_n < 1$ and $c_n > 0$. 

For the case when $(\ell,m,n) = (2,2,n)$, for $n$ odd, and $A = 1$ in \eqref{eq:gfe}, Duque-Rosero, Roy, Sankar, Wang, and the authors show that the number of $B,C \in (\Z \cap [-T,T])^2$ for which \eqref{eq:gfe} is everywhere locally soluble is bounded above and below by a constant times $T^2/\sqrt{\log T}$ \cite[Theorem C]{DRKKRSW_22n}. In forthcoming work, the same authors study local solubility for a broader family of stacks given by fibrations.

Density formulae and asymptotic results for local solubility similar to Theorems \ref{thm:rat_funcs} and \ref{thm:stats} for other families of interest appear in the literature: everywhere local solubility for quadric hypersurfaces is studied in \cite{BhargavaCremonaFisherJonesKeating}, for plane cubics in \cite{BhargavaCremonaFisher_plane_cubics}, for genus $1$ curves in $\P^{1}\times\P^{1}$ given by a form of bidegree $(2,2)$ in \cite{FisherHoPark}, for univariate polynomial equations of arbitrary degree in \cite{BhargavaCremonaFisherGajovic}, for genus $4$ trigonal superelliptic curves in \cite{BeneishKeyes2023}, and for cubic hypersurfaces in \cite{BeneishKeyes2025}. 

Some asymptotic results for \textit{global} solubility are also known, for example for quadrics, \cite{Guo,FriedlanderIwaniec,BhargavaCremonaFisherJonesKeating}, hyperelliptic curves \cite{bhargava2013hyperellipticcurvesqrational}, cubic surfaces \cite{Browning_cubic_surfaces}, hypersurfaces \cite{poonenvoloch,BLeBS,BeneishKeyes2025}, Ch\^{a}telet varieties \cite{BST2022, Diao2025}, and spherical stacky curves \cite{DRKKRSW_22n}. 
For an overview of this area of research, including many more references, see the introduction to \cite{LoughranRomeSofos2022}. 

\subsection{Organization}

In \S \ref{sec:prelim}, we give a geometric description of primitive solutions to generalized Fermat equations and some preliminary results for detecting $p$-adic solutions. In \S \ref{sec:probs} we define the local probabilities $\Prob_{\ell,m,n}(p)$ and describe them in terms of several conditional probabilities, which we then relate to one another in \S \ref{sec:relations}. The proofs of Theorems \ref{thm:rat_funcs} and \ref{thm:stats} are given in \S \ref{sec:proofs}. Several explicit examples are discussed in \S \ref{sec:examples}, with the relevant code hosted on GitHub \cite{github_gfedensity}; explicit formulae for $\Prob_{\ell,m,n}(p)$ are given for selected $(\ell,m,n)$ in \S \ref{sec:table}. Finally, in Appendix \ref{sec:appendix} (joint with Santiago Arango-Pi\~{n}eros), we describe the geometry of generalized Fermat equations in the language of stacky curves, deriving explicit Euler characteristic formulas and illustrating the general theory with several examples. 

\subsection*{Acknowledgments}

The authors would like to thank Santiago Arango-Pi\~{n}eros, Tim Browning and Dan Loughran for helpful conversations and feedback on an early draft of the main article. 

CK was partially supported by the Additional Funding Programme for Mathematical Sciences, delivered by EPSRC (EP/V521917/1) and the Heilbronn Institute for Mathematical Research, as well as an AMS-Simons Travel Grant.

\section{Preliminaries}
\label{sec:prelim}

\subsection{Notation}

Let $\ell,m,n$ denote positive integers throughout. We set the following notation.
\begin{align*}
	\glm &= \gcd(\ell,m) & \gln &= \gcd(\ell,n) & \gmn &= \gcd(m,n) \\
	w_0 &= \frac{\lcm(\ell,m,n)}{\ell} & w_1 &= \frac{\lcm(\ell,m,n)}{m} & w_\infty &= \frac{\lcm(\ell,m,n)}{n}
\end{align*}

\begin{definition}[Primitive]
\label{def:primitive}
    Let $R$ be a commutative ring with unity. A triple $(x,y,z) \in R^3$ is said to be \textit{primitive} if $x,y,z$ generate the unit ideal.
\end{definition}

When $R$ is a principal ideal domain, primitivity is equivalent to $\gcd(x,y,z) = 1$. We will primarily be interested in primitive integer triples $(x,y,z) \in \Z^3$ and primitive $p$-adic triples $(x,y,z) \in \Z_p^3$.

\subsection{Solutions to generalized Fermat equations}
\label{subsec:GFEs}

For a full discussion of generalized Fermat equations, especially the behavior of their primitive $\Z$-solutions, see \cite{DarmonGranville,Beukers,PSS,wilcox-grechuk24}. 

Following \cite{PSS,SantiThesis}, primitive solutions to \eqref{eq:gfe} can be studied geometrically as points on the quasi-affine surface
\begin{equation}\label{eq:SABC}
S_{A,B,C} \coloneqq \Spec\Z[x,y,z]/(Ax^{\ell} + By^{m} + Cz^{n})\setminus\{(0,0,0)\} \subset \A_{\Z}^{3}. 
\end{equation}
Alternatively, one may identify ``equivalent'' solutions by forming the quotient stack 
\begin{equation}\label{eq:XABC}
    \X_{A,B,C} \coloneqq \left[S_{A,B,C}/\G_{m}\right] \subset \left[(\A_{\Z}^{3}\setminus\{(0,0,0)\})/\G_{m}\right] = \mathcal{P}(w_0,w_1,w_\infty),
\end{equation}
where $\G_{m}$ acts on $\A_{\Z}^{3} \setminus \{(0,0,0)\}$ with weights $w_0 = \frac{\lcm(\ell,m,n)}{\ell}$, $w_1 = \frac{\lcm(\ell,m,n)}{m}$, and $w_\infty = \frac{\lcm(\ell,m,n)}{n}$. 

Geometrically, $\X_{A,B,C}$ is a relative stacky curve in the weighted projective stack $\mathcal{P}(w_0, w_1, w_\infty)$. Its coarse moduli space, denoted $X_{A,B,C}$, is given by $\Proj \left(\Spec \Z[x,y,z]/(Ax^\ell + By^m + Cz^n)\right)$ where $x,y,z$ have degrees $w_0, w_1, w_\infty$, respectively. The scheme $X_{A,B,C}$ is a relative curve with fibers of genus 
    \begin{equation}
    \label{eq:genus_coarse}
        g(X_{A,B,C}) = 1 + \frac12\left( \frac{\ell m n }{\lcm(\ell,m,n)} - (\glm + \gln + \gmn)\right)
    \end{equation}
(see Corollary~\ref{cor:GFE-euler-char}). Moreover, after base change to $R=\Z[\frac1{ABC\ell m n}]$, the coarse moduli map $(\X_{A,B,C})_R \to (X_{A,B,C})_R$ restricts to an isomorphism away from $xyz=0$ \cite[Lemma 3.2.2.b (iii)-(iv)]{SantiThesis}. The geometry of these curves is described further in the Appendix. 

\begin{lemma}
For any integers $\ell,m,n \geq 1$ and $A,B,C \neq 0$, and any prime $p$, 
\begin{enumerate}[label = (\roman*)]
    \item Equation \eqref{eq:gfe} has a primitive $\Z$-solution if and only if $\X_{A,B,C}(\Z)\not = \emptyset$; 
    \item Equation \eqref{eq:gfe} has a primitive $\Z_{p}$-solution if and only if $\X_{A,B,C}(\Z_{p})\not = \emptyset$. 
\end{enumerate}
\end{lemma}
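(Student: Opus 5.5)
The plan is to unwind the definition of $R$-points of the quotient stack $\X_{A,B,C} = [S_{A,B,C}/\G_m]$ and use that $\Z$ and $\Z_p$ are PIDs (indeed local or with trivial Picard group), so that every $\G_m$-torsor over $\Spec R$ is trivial. For a ring $R$, an object of $\X_{A,B,C}(R)$ is a $\G_m$-torsor $P \to \Spec R$ together with a $\G_m$-equivariant morphism $P \to S_{A,B,C}$. Since $\mathrm{Pic}(\Z) = \mathrm{Pic}(\Z_p) = 0$, every such torsor is trivial, $P \cong \G_{m,R}$. An equivariant map $\G_{m,R} \to S_{A,B,C}$ is then determined by the image of the identity section, i.e.\ by a morphism $\Spec R \to S_{A,B,C}$. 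Consequently $\X_{A,B,C}(R) \neq \emptyset$ if and only if $S_{A,B,C}(R) \neq \emptyset$, for $R = \Z$ or $R=\Z_p$. (Concretely, the groupoid $\X_{A,B,C}(R)$ is equivalent to the quotient groupoid of $S_{A,B,C}(R)$ by $R^\times$ acting with weights $w_0,w_1,w_\infty$.)

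It remains to identify $S_{A,B,C}(R)$ with primitive solutions. A morphism $\Spec R \to \A^3_\Z$ is a triple $(x,y,z) \in R^3$; it factors through the closed subscheme $\Spec \Z[x,y,z]/(Ax^\ell+By^m+Cz^n)$ exactly when $Ax^\ell+By^m+Cz^n = 0$ in $R$, and it lands in the open complement of the origin $V(x,y,z)$ exactly when the preimage of $V(x,y,z)$ in $\Spec R$ is empty, i.e.\ when the ideal $(x,y,z) \subseteq R$ is the unit ideal. By Definition~\ref{def:primitive} this is precisely primitivity, and since $\Z$, $\Z_p$ are PIDs this means $\gcd(x,y,z)=1$, i.e.\ $(x,y,z) \in \Z_p^3 \setminus (p\Z_p)^3$ in the $p$-adic case. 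Combining gives both (i) and (ii).

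The only step requiring care is the first: making sure the stacky $R$-points really reduce to scheme points, i.e.\ the triviality of $\G_m$-torsors over $\Spec \Z$ and $\Spec \Z_p$ (Hilbert 90 / vanishing Picard group of a PID), and that the quotient is taken in the fppf (or étale) topology, where $\G_m$-torsors are still classified by $H^1(\Spec R,\G_m) = \mathrm{Pic}(R)$. I would also note that the statement does not require $xyz \neq 0$, consistent with the definition of primitive $p$-adic solution given in the introduction.
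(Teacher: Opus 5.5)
Your proposal is correct and follows the same approach as the paper. The paper's one-line proof asserts that $S_{A,B,C}(R)\to\X_{A,B,C}(R)$ is essentially surjective for $R=\Z,\Z_p$; your argument that every $\G_m$-torsor over $\Spec R$ is trivial ($\mathrm{Pic}(R)=0$) is what justifies that assertion, and your identification of $S_{A,B,C}(R)$ with primitive triples in the sense of Definition~\ref{def:primitive} supplies the step the paper leaves implicit.
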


\begin{proof}
The quotient map $S\rightarrow\X$ induces essential surjections of groupoids $S(\Z)\to\X(\Z)$ and $S(\Z_{p})\to\X(\Z_{p})$ and in particular these are surjective on objects. Both statements follow immediately. 
\end{proof}

In most situations, solutions to \eqref{eq:gfe} are described equally well by $S_{A,B,C}$ and $\X_{A,B,C}$. For the purposes of detecting $\Z$- or $\Z_p$-points, however, the relative dimension $1$ object $\X_{A,B,C}$ is more natural to work with. Additionally, while the coarse space $X_{A,B,C}$ is sufficient for detecting \emph{rational} solutions to \eqref{eq:gfe}, due to the failure of the valuative criterion for properness for algebraic stacks, analyzing $X_{A,B,C}$ is insufficient for detecting integral (or $p$-adic integral) solutions; see e.g.\ \cite{BhargavaPoonen,santens23,DRKKRSW_22n}. 

For a prime $p$, let $\X_{A,B,C}(\Z_{p})$ (resp.~$X_{A,B,C}(\Z_{p})$) denote the set\footnote{This is an abuse of notation: for a stack $\X$, $\X(R)$ typically denotes the {\it groupoid} of $R$-points of $\X$, but we will only need to consider objects (= equivalence classes of solutions to \eqref{eq:gfe}) and so $\X_{A,B,C}(R)$ will just denote a set for us.} of $\Z_{p}$-points on $\X_{A,B,C}$ (resp.~$X_{A,B,C}$). Readers unfamiliar with the language of algebraic stacks may read the notation $\X_{A,B,C}(\Z_{p})$ as ``the set of $p$-adic integer solutions to the generalized Fermat equation \eqref{eq:gfe} up to scaling'' (with weights $w_0,w_1,w_\infty$) and not lose any insight into our results.

Furthermore, we will use $\X_{A,B,C}(\Z_p; *)$ to denote the subset of points satisfying some condition $*$, e.g.\ 
\[\X_{A,B,C}(\Z_p; p \nmid x) = \left\{[x:y:z] \in \X_{A,B,C}(\Z_p) : p \nmid x \right\}.\]
Here $[x:y:z]$ denotes the equivalence class of triples $(x,y,z) \in \Z_p^3$ under the \textit{weighted} $\G_m$-action identifying $(x,y,z)$ with $(\lambda^{w_0}x, \lambda^{w_1}y, \lambda^{w_\infty}z)$ for $\lambda \in \Z_p^\times$.
We will always use $*$ for divisibility conditions on the coordinates; such conditions are well defined on equivalence classes.

\subsection{\texorpdfstring{$p$-adic points on $\X_{A,B,C}$}{p-adic points on X_A,B,C}}
\label{subsec:padic_pts}

In this subsection, we assemble some useful technical lemmas for detecting $p$-adic solutions to \eqref{eq:gfe}, viewed as $\Z_{p}$-points on the stacky curve $\X_{A,B,C}$. 

\begin{lemma}\label{lem:intermediate}
	Suppose $\expcond{\gmn=1}$ and $p \nmid B,C$. Then there exist $y,z \in \Z_p^\times$ satisfying
	\[By^m + Cz^n = 0.\]
	In particular, for any $A$ we have $\X_{A,B,C}(\Z_p; p \nmid y)$ and $\X_{A,B,C}(\Z_p; p \nmid z)$ are both nonempty.
\end{lemma}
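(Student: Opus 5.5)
Since $\gcd(m,n)=1$, we can take $y$ and $z$ to be suitable powers of a single $p$-adic unit, which sidesteps any question of whether $-C/B$ is an $m$-th or $n$-th power. Set $u = -C/B \in \Z_p^\times$; this is a unit because $p \nmid B,C$. We look for solutions of the form $y = u^{a}$, $z = u^{b}$ with integers $a,b$. The equation $By^m + Cz^n = 0$ is equivalent to $y^m = u z^n$, i.e.\ $u^{am} = u^{1+bn}$. This holds whenever $am - bn = 1$.

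By B\'ezout's identity, $\gmn = 1$ gives integers $a,b$ with $am - bn = 1$. Then $y = u^a$ and $z = u^b$ are units in $\Z_p$, possibly with negative exponents, which is harmless since $u$ is a unit. They satisfy $B y^m + C z^n = B u^{am} + C u^{bn} = u^{bn}(Bu + C) = 0$, since $Bu = -C$. This proves the first assertion.

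For the second assertion, take $x = 0$ and $(y,z)$ as above. The triple $(0,y,z) \in \Z_p^3$ satisfies $Ax^\ell + By^m + Cz^n = 0$ for any $A$. It is primitive because $y$ (and $z$) is a unit. Hence $[0:y:z] \in \X_{A,B,C}(\Z_p)$, and both $p \nmid y$ and $p \nmid z$ hold for this point, so both subsets are nonempty.

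There is no real obstacle here. The only point to check is that allowing $x = 0$ is legitimate for $\Z_p$-points of $\X_{A,B,C}$. It is: primitivity only requires that $(x,y,z)$ generate the unit ideal, and the point lies in $S_{A,B,C}$, which excludes only the origin. Nonvanishing of $xyz$ is required only for the notion of a primitive integer solution in the introduction, not for points of $\X_{A,B,C}$.
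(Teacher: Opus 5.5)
Your proof is correct and follows essentially the same approach as the paper: use B\'ezout with $\gcd(m,n)=1$ to take $y$ and $z$ as integer powers of units built from $B$ and $C$, then use $[0:y:z]$ for the second assertion. Taking both $y$ and $z$ as powers of $u=-C/B$ is a slight streamlining. It absorbs the sign and so avoids the paper's parity normalization (``assume $\beta n$ is odd'') used with $y=(C/B)^\alpha$, $z=(-B/C)^\beta$.
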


\begin{proof}
	Write $\alpha m + \beta n = 1$ for $\alpha,\beta \in \Z$. Without loss of generality, assume $\beta n$ is odd. Then $y = (C/B)^\alpha$ and $z = (-B/C)^\beta$ satisfies the desired equality. For any value of $A$, we have
	\[\left[0:(C/B)^\alpha:(-B/C)^\beta\right] \in \X_{A,B,C}(\Z_p; p \nmid yz)\]
	so we are done.
\end{proof}

\begin{lemma}\label{lem:p_divides_none}
    Suppose $\pcond{p \nmid \glm\gln\gmn}$ and $p$ satisfies
    \begin{equation}\label{eq:pt_count_hyp}
        \pcond{p+1 - \left(1 + \frac{1}{2}\left (\frac{\ell mn}{\lcm(\ell,m,n)} - (g_{\ell m} + g_{\ell n} + g_{mn})\right )\right)\lfloor 2\sqrt{p} \rfloor - \gcd(p-1,\gmn) > 0.}
    \end{equation}
    If also $p \nmid A,B,C$, then $\X_{A,B,C}(\Z_p; p \nmid x) \neq \emptyset$.
\end{lemma}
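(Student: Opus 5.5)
Since $p \nmid A,B,C$, the idea is to reduce mod $p$, find a smooth $\F_p$-point on the special fiber with $x \neq 0$, and lift it by Hensel's lemma. For the lift to work we need a point $(x_0,y_0,z_0) \in \F_p^3$ with $x_0 \neq 0$, $Ax_0^\ell + By_0^m + Cz_0^n = 0$, and some partial derivative nonzero. The partials are $\ell A x^{\ell-1}$, $mBy^{m-1}$, $nCz^{n-1}$. So it suffices to find a point with $x_0y_0 \neq 0$ and $p\nmid m$, or $x_0z_0\ne0$ and $p\nmid n$, or simply $x_0\neq0$ and $p\nmid\ell$. I expect to use the affine points of the coarse curve where $xyz\neq 0$ and to handle any $p \mid \ell m n$ by taking care with which partial is nonzero. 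If $p$ divides some exponent, I would first treat the main case $p\nmid \ell mn$ and then adapt. Note the hypothesis only excludes $p \mid \glm\gln\gmn$, so $p$ divides at most one of $\ell,m,n$, and then one of the other partials is available once $y_0z_0\neq0$.

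The main step is point counting. Consider the curve $X_{A,B,C}$ over $\F_p$, which is the coarse space with genus $g$ given by \eqref{eq:genus_coarse}; the factor in \eqref{eq:pt_count_hyp} is exactly this $g$. I would argue the reduction is a smooth projective curve whose geometric components all have genus $\le g$. I expect to bound the genus instead of proving geometric irreducibility, which might fail when the gcds are nontrivial. More robustly, I would use that $Ax^\ell+By^m+Cz^n$ with $p\nmid \glm\gln\gmn$ gives a curve whose open part $xyz\neq0$ is isomorphic to the stack's open part, and apply Hasse--Weil (Weil bound) in the form $\#X(\F_p) \ge p+1-g\lfloor 2\sqrt p\rfloor$. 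This uses that $\#X(\F_p)$ is an integer, which is why the floor appears. Then I subtract the points with $x=0$. On the coarse space these are the solutions of $By^m+Cz^n=0$ up to weighted scaling, and their number over $\F_p$ is at most $\gcd(p-1,\gmn)$: the ratio $y^{m/\gmn}/z^{n/\gmn}$ is constrained to the $\gmn$-th roots of $-C/B$, giving at most $\gcd(p-1,\gmn)$ points. Hypothesis \eqref{eq:pt_count_hyp} then yields an $\F_p$-point of $X$ with $x\neq0$.

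Next I would lift this coarse point to a point of the stack, i.e.\ an honest triple $(x_0,y_0,z_0)\in\F_p^3$ with $x_0\neq0$. Points of the coarse space with $xyz\neq0$ correspond to stack points, since the coarse map is an isomorphism there; I would check this holds over $\F_p$ under $p\nmid ABC\,\glm\gln\gmn$, possibly by a direct argument. The gerbes have trivial stabilizer when $p\nmid$ the relevant weights gcds, and $\F_p$-points of $\mathbb{G}_m$-quotients lift by Hilbert 90. Points with $x\neq0$ but $yz=0$ are only finitely many, and can be handled directly: for example, $y=0$ gives $Ax^\ell=-Cz^n$ with $x\ne0$, which is again smooth at the $x$-partial when $p\nmid\ell$. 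I would then apply Hensel to get $(x,y,z)\in\Z_p^3$ with $p\nmid x$.

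The main obstacle is justifying the Weil bound with genus $g$ for the reduction mod $p$, given possible reducibility and singularities at the stacky points over $\F_p$, together with handling the case where $p$ divides an exponent. My first attempt would be to normalize the coarse curve and bound the arithmetic genus of each geometric component by $g$ via the Appendix's Euler characteristic formula (Corollary~\ref{cor:GFE-euler-char}).
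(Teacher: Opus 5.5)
Your outline is the paper's proof: apply the Weil bound to the reduction of the coarse curve $X_{A,B,C}$ of genus $g$, discard the at most $\gcd(p-1,\gmn)$ points on $x=0$, and Hensel-lift using that $p$ divides at most one exponent. You also convert a coarse $\F_p$-point into an actual triple in $\F_p^3$, a step the paper leaves implicit. That step goes through. Where $xyz\neq 0$, use Hilbert 90. On $y=0$ or $z=0$ the stabilizers $\mu_{\gcd(w_0,w_\infty)}$, $\mu_{\gcd(w_0,w_1)}$ are in general nontrivial whatever $p$ is, contrary to what you suggest, but a B\'ezout construction as in Lemma~\ref{lem:intermediate} still produces the triple.

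Two of your justifications are wrong, however, and would break the argument if carried out as written.

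\emph{The floor in the point count.} The inequality $\#X(\F_p)\ge p+1-g\lfloor 2\sqrt p\rfloor$ does not follow from Weil's bound plus integrality of $\#X(\F_p)$. Integrality only gives $p+1-\lfloor 2g\sqrt p\rfloor$. Since $g\lfloor 2\sqrt p\rfloor\le\lfloor 2g\sqrt p\rfloor$, often strictly (for $g=2$, $p=11$ the subtracted term is $13$ instead of $12$), this is weaker and does not match \eqref{eq:pt_count_hyp}. What you need is Serre's refinement of the Weil bound, $|\#X(\F_q)-q-1|\le g\lfloor 2\sqrt q\rfloor$, which is precisely the form in the hypothesis.

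\emph{Geometric irreducibility.} Bounding the genera of the geometric components is no substitute for it. A smooth projective curve over $\F_p$ none of whose geometric components is defined over $\F_p$ has no $\F_p$-points at all, so the Weil bound genuinely needs $X_{A,B,C}$ to be geometrically integral. This is easy to supply. Write
$By^m+Cz^n=B\prod_{\zeta^{\gmn}=-C/B}\bigl(y^{m/\gmn}-\zeta z^{n/\gmn}\bigr)$. Each factor is irreducible because $\gcd(m/\gmn,n/\gmn)=1$, and the factors are distinct because $p\nmid\gmn$. Hence $Ax^\ell+By^m+Cz^n$ is Eisenstein in $x$ at any one of these factors, so it is irreducible over $\overline{\F}_p$, and $X_{A,B,C}$ is geometrically integral.
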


\begin{proof}
    Over $\F_p$, the reduction of the coarse space $\overline{X}_{A,B,C}$ is isomorphic to a smooth curve of genus given by \eqref{eq:genus_coarse}.\footnote{If $w_0$, $w_1$, $w_\infty$ are not pairwise coprime, then a priori $X_{A,B,C}$ is not \textit{well-formed} in the sense of \cite{Iano-Fletcher_2000}. However, via an isomorphism of the underlying weighted projective space, we find $X_{A,B,C}$ is isomorphic to a well-formed curve and the genus formula is unchanged. Smoothness follows from $p \nmid ABC\glm\gln\gmn$.} Moreover, $X_{A,B,C}(\F_p; x=0)$ is in bijection with nontrivial solutions to $By^{\gmn} + Cz^{\gmn} = 0$, considered up to scaling. 
    
    There are at most $\gcd(p-1,\gmn)$ such points, so by the Hasse--Weil bound, when $p$ satisfies \eqref{eq:pt_count_hyp} we have $\#X_{A,B,C}(\F_p; x\neq 0) > 0$. Since such a point has at least two nonzero coordinates, and $p$ divides at most one of $\ell,m,n$, it may be lifted via Hensel's lemma to a $\Z_p$-solution to \eqref{eq:gfe}, i.e. a point in $\X_{A,B,C}(\Z_p; p \nmid x)$.
\end{proof}

\begin{remark}\label{rem:improvements_to_pt_ct_bound}
    We can weaken the hypothesis \eqref{eq:pt_count_hyp} in certain situations. For example, if $p > 2$, $p \nmid m$, and $\gcd(p-1,m) = 1$, then \eqref{eq:pt_count_hyp} is not necessary, since $(\F_p^\times)^m = \F_p^\times$. In this case, $X_{A,B,C}(\F_p; x \neq 0)$ always contains a point with nonzero $y$-coordinate, which may be lifted via Hensel's Lemma to $X_{A,B,C}(\Z_p; p \nmid x)$.
\end{remark}

\begin{remark}\label{rem:direct_proof}
    Interpreting solutions to \eqref{eq:gfe} as points on the stacky curve $\X_{A,B,C}$, or its coarse space, is not essential when proving a statement like Lemma \ref{lem:p_divides_none}. One could instead view solutions as points on the smooth quasi-affine surface $S\subset\mathbb{A}^3$ defined in \eqref{eq:SABC} and count $\F_p$-points away from $x=0$ (or $y=0$, $z=0$, respectively), using an effective version of the Lang--Weil theorem (see e.g.\ \cite{Slavov_2023}) in place of \eqref{eq:pt_count_hyp}. 
\end{remark}

\begin{lemma}\label{lem:p_divides_one}
    Suppose $p \mid A$ and $p \nmid BC$. Assume $\expcond{m \leq n}$.
    \begin{enumerate}[label = (\roman*)]
        \item If $\pcond{p \nmid \gmn}$, then $\X_{A,B,C}(\Z_p; p \nmid y) = \X_{A,B,C}(\Z_p; p \nmid z)$ are nonempty if and only if $-C/B \in (\F_p^\times)^{\gmn}$.

        \item If $p^m \nmid A$ then $\X_{A,B,C}(\Z_p; p \mid y,z) = \emptyset$.

        \item If $p^m \mid A$ then $\X_{A,B,C}(\Z_p; p \mid y,z)$ is nonempty if and only if $\X_{p^{-m}A,B,p^{n-m}C}(\Z_p; p \nmid x)$ is nonempty.
    \end{enumerate}
\end{lemma}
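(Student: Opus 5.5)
We treat the three parts separately. Throughout, $(x,y,z)\in\Z_p^3$ is primitive with $Ax^\ell+By^m+Cz^n=0$, $p\mid A$ and $p\nmid BC$.

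\textbf{Part (i).} Reducing mod $p$ gives $By^m+Cz^n\equiv 0$. Since $p\nmid BC$, $p\nmid y$ holds iff $p\nmid z$, which gives the claimed equality of the two sets.

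For necessity, suppose $y,z$ are units. Write $m=\gmn m'$ and $n=\gmn n'$. Then $-C/B \equiv y^m z^{-n} = (y^{m'}z^{-n'})^{\gmn}$ is a $\gmn$-th power in $\F_p^\times$.

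For sufficiency, suppose $-C/B\equiv u^{\gmn}$ with $u\in\F_p^\times$. Choose $\alpha,\beta\in\Z$ with $\alpha m'-\beta n'=1$. Then $y_0=u^\alpha$, $z_0=u^\beta$ satisfy
\[
y_0^m z_0^{-n} = u^{\gmn(\alpha m'-\beta n')} = u^{\gmn} = -C/B,
\]
so $By_0^m+Cz_0^n\equiv 0$.

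To lift, take $x=0$ and view $f(y)=By^m+Cz_0^n$ (with $z_0$ lifted arbitrarily to $\Z_p$) as a polynomial in $y$. Hensel's lemma applies if $p\nmid m$, since then $f'(y_0)=mBy_0^{m-1}$ is a unit. If instead $p\mid m$, then $p\nmid n$ because $p\nmid\gmn$, so we lift in $z$. This produces a primitive point $[0:y:z]$.

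This part is the main subtlety: one must choose which variable to Hensel-lift, and observe that $p$ cannot divide both $m$ and $n$.

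\textbf{Part (ii).} If $p\mid y$ and $p\mid z$, primitivity forces $p\nmid x$. Then $v_p(Ax^\ell)=v_p(A)$. On the other hand, $v_p(By^m+Cz^n)\ge \min(m,n)=m$, using the hypothesis $m\le n$. The equation forces $v_p(A)\ge m$, so $p^m\mid A$, contradicting the hypothesis.

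\textbf{Part (iii).} Suppose $p^m\mid A$, and let $[x:y:z]$ have $p\mid y,z$. Substitute $y=py'$ and $z=pz'$ and divide the equation by $p^m$. This gives
\[
p^{-m}A\,x^\ell+B\,y'^m+p^{n-m}C\,z'^n=0,
\]
with $p\nmid x$. Hence $(x,y',z')$ is primitive and represents a point of $\X_{p^{-m}A,B,p^{n-m}C}(\Z_p;p\nmid x)$.

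Conversely, given a primitive solution $(x,y',z')$ of the new equation with $p\nmid x$, the triple $(x,py',pz')$ solves the original equation. It is primitive because $x$ is a unit, and it lies in the set $p\mid y,z$.

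Both maps respect the weighted $\Z_p^\times$-scaling: scaling by $\lambda$ multiplies $y,y'$ by the same $\lambda^{w_1}$ and $z,z'$ by the same $\lambda^{w_\infty}$. So they are well defined on equivalence classes, and nonemptiness transfers in both directions.
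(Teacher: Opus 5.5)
Your proof is correct and follows the paper's argument essentially step for step. For (i), both use reduction mod $p$ to get the equality of the two sets and the necessity of $-C/B \in (\F_p^\times)^{\gmn}$, a B\'ezout construction for sufficiency, and Hensel lifting in whichever of $y,z$ has exponent prime to $p$; for (ii) and (iii), both use the valuation count and the rescaling $(x,y,z)\mapsto(x,y/p,z/p)$. Your version only spells out details the paper leaves implicit: you build $y_0 = u^\alpha$, $z_0 = u^\beta$ directly rather than citing the argument of Lemma~\ref{lem:intermediate}, you take $x=0$ in the lift, and you check compatibility with the weighted $\Z_p^\times$-action.
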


\begin{proof}
    Any solution to \eqref{eq:gfe} with $p \nmid y$ must have $p \nmid z$ and vice versa, establishing the equality in (i). A necessary condition for such a solution to exist is for 
    \begin{equation}\label{eq:BCcong}
    	By^m + Cz^n \equiv 0 \pmod{p}
    \end{equation} 
    to have a nontrivial solution in $\F_p$. Moreover, since $\pcond{p \nmid \gmn}$, we have $\pcond{p \nmid m}$ or $\pcond{p \nmid n}$, so any nontrivial $\F_p$-solution to \eqref{eq:BCcong} lifts via Hensel's lemma to a $\Z_p$-solution to \eqref{eq:gfe} with $p \nmid y,z$.

    Thus it suffices to determine when \eqref{eq:BCcong} has a nontrivial $\F_p$-solution. Rewriting \eqref{eq:BCcong} as $(y^{m/\gmn})^{\gmn} \equiv \frac{-C}{B} (z^{n/\gmn})^{\gmn} \pmod{p}$, we see that $-C/B \in (\F_p^\times)^{\gmn}$ is necessary. To see it suffices, suppose $-C/B = D^{\gmn}$ and observe
    \[y^{m/\gmn} \equiv D z^{n/\gmn} \pmod{p} \implies By^m + Cz^n \equiv 0 \pmod{p}.\]
    A solution to the former with $p \nmid y,z$ always exists since $\gcd(m/\gmn, n/\gmn) = 1$, by the argument in the proof of Lemma \ref{lem:intermediate}. This establishes (i).

    For (ii) and (iii), we see that $[x:y:z] \in \X_{A,B,C}(\Z_p; p \mid y,z)$ satisfies $p^m \mid Ax^\ell$. If $p^m \nmid A$, this forces $p \mid x$, a contradiction. If $p^m \mid A$, then $[x:y/p:z/p] \in \X_{p^{-m}A,B,p^{n-m}C}(\Z_p; p \nmid x)$. The reverse implication is similar.
\end{proof}

\begin{lemma}\label{lem:p_divides_two}
    Suppose $p \mid A,B$ and $p \nmid C$. Set $i = \min\{v_p(A), v_p(B), n\}$. 
    \begin{enumerate}[label = (\roman*)]
        \item $\X_{A,B,C}(\Z_p) = \X_{A,B,C}(\Z_p; p \mid z)$.
        \item $\X_{A,B,C}(\Z_p; p \mid z)$ is nonempty if and only if $\X_{p^{-i}A, p^{-i}B, p^{n-i}C}(\Z_p : p \nmid x \text{ or } p \nmid y)$ is nonempty.
    \end{enumerate}
\end{lemma}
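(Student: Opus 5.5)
The plan is a direct, elementary argument: (i) is a reduction modulo $p$, and (ii) is a change of variables $z = pz'$ followed by dividing out a common power of $p$. No Hensel lifting or point counting is needed.

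For (i), I would take any $[x:y:z] \in \X_{A,B,C}(\Z_p)$ and reduce \eqref{eq:gfe} modulo $p$. Since $p \mid A$ and $p \mid B$, we get $Cz^n \equiv 0 \pmod{p}$. As $p \nmid C$, this forces $p \mid z$. So every $\Z_p$-point has $p \mid z$, which is the claimed equality. Divisibility conditions are well defined on weighted equivalence classes, so this is a statement about the set of classes, as required.

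For (ii), I would set up mutually inverse maps between representatives and then check they respect primitivity and scaling.

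\emph{Forward direction.} Let $(x,y,z)$ be a primitive solution with $p \mid z$, and write $z = pz'$ with $z' \in \Z_p$. The equation becomes $Ax^\ell + By^m + p^nCz'^n = 0$. By the choice $i = \min\{v_p(A), v_p(B), n\}$, each of $p^{-i}A$, $p^{-i}B$ and $p^{n-i}$ lies in $\Z_p$. Dividing by $p^i$ therefore gives
\[p^{-i}A\,x^\ell + p^{-i}B\,y^m + p^{n-i}C\,z'^n = 0.\]
Because $(x,y,z)$ is primitive and $p \mid z$, we have $p \nmid x$ or $p \nmid y$. Hence $(x,y,z')$ is primitive and lies in $\X_{p^{-i}A, p^{-i}B, p^{n-i}C}(\Z_p; p \nmid x \text{ or } p \nmid y)$.

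\emph{Converse.} Given $(x,y,z')$ in that set, put $z = pz'$ and multiply the equation by $p^i$ to recover \eqref{eq:gfe}. The triple $(x,y,z)$ is primitive because $p \nmid x$ or $p \nmid y$, and it has $p \mid z$.

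\emph{Scaling.} Both maps are compatible with the weighted $\G_m$-action by $\lambda \in \Z_p^\times$, since $\lambda^{w_\infty}z/p = \lambda^{w_\infty}(z/p)$. They therefore descend to mutually inverse bijections on classes, which gives the nonemptiness equivalence.

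The only point needing care is the choice of $i$: it must make all three new coefficients $p$-integral, including $p^{n-i}C$. This is exactly why $i$ is capped at $n$. The rest is routine.
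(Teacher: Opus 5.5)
Your proof is correct and follows the paper's argument exactly: (i) by reducing the equation modulo $p$, and (ii) via the correspondence $[x:y:z] \mapsto [x:y:z/p]$ after dividing by $p^i$. You also spell out details the paper leaves implicit, namely the $p$-integrality of the new coefficients, the preservation of primitivity (using $p \nmid x$ or $p \nmid y$), and compatibility with the weighted scaling.
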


\begin{proof}
    The hypotheses force any $[x:y:z] \in \X_{A,B,C}(\Z_p)$ to have $p \mid z$, giving (i). For (ii), we see that if $[x:y:z] \in \X_{A,B,C}(\Z_p; p \mid z)$, then $[x:y:z/p] \in \X_{p^{-i}A, p^{-i}B, p^{n-i}C}(\Z_p; p \nmid x \text{ or } p \nmid y)$, and vice-versa.
\end{proof}

\section{Probabilities}
\label{sec:probs}

For a prime $p$, let $\mu_p$ denote the natural Haar measure on $\Z_p^3$, normalized so $\mu_p(\Z_p^3) = 1$.

\begin{definition}\label{def:rho}
	Let $\Prob_{\ell,m,n}(p)$ denote the probability that $\X_{A,B,C}$ has a $\Z_p$-point,
    \[\Prob_{\ell,m,n}(p) = \mu_p\left(\left\{ (A,B,C) \in \Z_p^3 : \X_{A,B,C}(\Z_p) \neq \emptyset \right\}\right).\]
	For brevity, we will often drop the subscripts and $p$-dependence and write $\Prob = \Prob_{\ell,m,n}(p)$. 
\end{definition}

We also define several conditional probabilities.

\begin{definition}\label{def:rhocond}
    Let
    \begin{align*}
        \Probzero &= \frac{\mu_p\left(\left\{ (A,B,C) \in (\Z_p^\times)^3 : \X_{A,B,C}(\Z_p) \neq \emptyset\right\}\right)}{(p-1)^3/p^3},\\
        \ProbA &= \frac{\mu_p\left(\left\{ (A,B,C) \in p\Z_p \times (\Z_p^\times)^2 : \X_{A,B,C}(\Z_p) \neq \emptyset\right\}\right)}{(p-1)^2/p^3},\\
        \ProbAB &= \frac{\mu_p\left(\left\{ (A,B,C) \in (p\Z_p)^2 \times \Z_p^\times : \X_{A,B,C}(\Z_p) \neq \emptyset\right\}\right)}{(p-1)/p^3}
    \end{align*}
    denote the probabilities that $\X_{A,B,C}$ has a $\Z_p$-point given $p \nmid ABC$, $p$ divides $A$ only, and $p$ divides $A$ and $B$ only, respectively.\footnote{The denominator of $\Probzero$ is $\mu_p((\Z_p^\times)^3)$, the probability of $p \nmid ABC$. Similarly, the denominators of $\ProbA$ and $\ProbAB$ are, respectively, $\mu_p(p\Z_p \times (\Z_p^\times)^2)$ and $\mu_p((p\Z_p)^2 \times \Z_p^\times)$.}
    We define probabilities $\ProbB$, $\ProbC$, $\ProbAC$, and $\ProbBC$ similarly, conditional on $p$ dividing the coefficient(s) in the subscript, but not the others.
\end{definition}

Combining the definitions above, we have
\begin{equation}\label{eq:rho}
	\Prob = \frac{(p-1)^3}{p^3} \Probzero + \frac{(p-1)^2}{p^3} \left( \ProbA + \ProbB + \ProbC \right) + \frac{p-1}{p^3} \left( \ProbAB + \ProbAC + \ProbBC \right) + \frac1{p^3} \Prob.
\end{equation}

\begin{remark}\label{rem:different_def_rho}
    In our definition of $\Prob$, we allow for $(A,B,C) \in (p\Z_p)^3$ to align with the local factors that occur in \cite[Proposition 3.4]{BBL}, which we will use in the proof of Theorem \ref{thm:stats}. It would also be natural to define $\Prob$ by first removing common factors of $p$: 
    \[\widetilde{\Prob}_{\ell,m,n}(p) = 
        \frac{\mu_p\left(\left\{ (A,B,C) \in \Z_p^3 \setminus (p\Z_p)^3 : \X_{A,B,C}(\Z_p) \neq \emptyset \right\}\right)}
        {\mu_p\left(\Z_p^3 \setminus (p\Z_p)^3\right)}.\]
    This essentially amounts to using a Haar measure $\widetilde{\mu}_p$, renormalized so that $\widetilde{\mu}_p\left(\Z_p^3 \setminus (p\Z_p)^3\right) = 1$. After redefining the conditional probabilities in Definition \ref{def:rhocond} with respect to this normalization and forming a relation analogous to \eqref{eq:rho}, it follows from $\mu_p\left(\Z_p^3 \setminus (p\Z_p)^3\right) = 1-1/p^3$ that $\Prob_{\ell,m,n}(p) = \widetilde{\Prob}_{\ell,m,n}(p)$.
\end{remark}

We will also need some auxiliary probabilities that incorporate conditions on the valuations of the coefficients and divisibility restrictions on the coordinates of $[x:y:z] \in \X_{A,B,C}(\Z_p)$.

\begin{definition}\label{def:aux}
    Let $\Probvals{*_A}{*_B}{*_C}$ be the probability that \eqref{eq:gfe} has a $\Z_p$-solution given that $A,B,C$ satisfy the prescribed conditions on their $p$-adic valuations. For example, for $a,b,c \geq 0$, $\Probvals{=a}{=b}{=c}$ denotes the probability of a $\Z_{p}$-solution given that $v_{p}(A) = a,v_{p}(B) = b$ and $v_{p}(C) = c$: 
    \[\Probvals{=a}{=b}{=c} = \frac{\mu_p\left(\left\{ (A,B,C) \in \Z_p^3 : (v_p(A),v_p(B),v_p(C)) = (a,b,c),\ \X_{A,B,C}(\Z_p) \neq \emptyset \right\}\right)}{\mu_p\left(\left\{ (A,B,C) \in \Z_p^3 : (v_p(A),v_p(B),v_p(C)) = (a,b,c)\right\}\right)}.\]
    We may also prescribe an inequality, as in
    \[\Probvals{\geq a}{=b}{=c} = \frac{\mu_p\left(\left\{ (A,B,C) \in \Z_p^3 : v_p(A) \geq a,\ (v_p(B),v_p(C)) = (b,c),\ \X_{A,B,C}(\Z_p) \neq \emptyset \right\}\right)}{\mu_p\left(\left\{ (A,B,C) \in \Z_p^3 : v_p(A) \geq a,\ (v_p(B),v_p(C)) = (b,c)\right\}\right)}.\]
    Furthermore, let $\Probvals{*_A}{*_B}{*_C}^{(x)}$ denote the probability that $\X_{A,B,C}(\Z_p; p \nmid x) \neq \emptyset$, given the prescribed conditions on $A,B,C$ (and similarly for $\Probvals{*_A}{*_B}{*_C}^{(y)}$, $\Probvals{*_A}{*_B}{*_C}^{(z)}$) and $\Probvals{*_A}{*_B}{*_C}^{(x \text{ or } y)}$ denote the conditional probability that $\X_{A,B,C}(\Z_p; p \nmid x \text{ or } p \nmid y) \neq \emptyset$ (and similarly for $\Probvals{*_A}{*_B}{*_C}^{(x \text{ or } z)}$, $\Probvals{*_A}{*_B}{*_C}^{(y \text{ or } z)}$).
\end{definition}

These auxiliary probabilities sometimes overlap with those introduced in Definition \ref{def:rhocond}; for example, $\Probzero = \Probvals{=0}{=0}{=0}$ and $\ProbA = \Probvals{\geq 1}{=0}{=0}$.

Several of these probabilities are straightforward to compute using the results from the previous section.

\begin{lemma}\label{lem:probzero}
	Suppose $\pcond{p \nmid \glm\gln\gmn}$ and \pcond{$p$ satisfies \eqref{eq:pt_count_hyp}}. Then 
	\[\Probzero = \Probvals{=0}{=0}{=0}^{(x)} = 1.\]
\end{lemma}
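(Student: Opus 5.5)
The plan is to read the lemma straight off Lemma~\ref{lem:p_divides_none}. By Definition~\ref{def:rhocond} and Definition~\ref{def:aux}, $\Probzero = \Probvals{=0}{=0}{=0}$ is the conditional measure of the set of $(A,B,C) \in (\Z_p^\times)^3$ with $\X_{A,B,C}(\Z_p) \neq \emptyset$. Likewise $\Probvals{=0}{=0}{=0}^{(x)}$ is the conditional measure of the subset where $\X_{A,B,C}(\Z_p; p \nmid x) \neq \emptyset$. Since $\X_{A,B,C}(\Z_p; p \nmid x) \subseteq \X_{A,B,C}(\Z_p)$, we have the chain
\[\Probvals{=0}{=0}{=0}^{(x)} \leq \Probzero \leq 1,\]
so it will suffice to show $\Probvals{=0}{=0}{=0}^{(x)} = 1$.

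For this I will take an arbitrary $(A,B,C) \in (\Z_p^\times)^3$, so that $p \nmid A,B,C$. The hypotheses of the present lemma are exactly the remaining hypotheses of Lemma~\ref{lem:p_divides_none}: namely $p \nmid \glm\gln\gmn$ and the point-count inequality \eqref{eq:pt_count_hyp}. That lemma then gives $\X_{A,B,C}(\Z_p; p \nmid x) \neq \emptyset$ for every such triple. Thus the set whose measure appears in the numerator of $\Probvals{=0}{=0}{=0}^{(x)}$ is all of $(\Z_p^\times)^3$. Its measure is $(p-1)^3/p^3$, which is the denominator, so the ratio is $1$. The chain of inequalities then forces $\Probzero = 1$ as well.

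There is no real obstacle here, since all of the substance (Hasse--Weil and Hensel lifting) is already contained in Lemma~\ref{lem:p_divides_none}. The only point to check is that the hypotheses match exactly. In particular, \eqref{eq:pt_count_hyp} involves $\gcd(p-1,\gmn)$ and does not depend on $A,B,C$, so the conclusion holds uniformly over the whole unit cube rather than merely almost everywhere.
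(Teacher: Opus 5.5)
Your proposal is correct and matches the paper's proof: you apply Lemma~\ref{lem:p_divides_none} to every $(A,B,C) \in (\Z_p^\times)^3$ to get $\Probvals{=0}{=0}{=0}^{(x)} = 1$, and $\Probzero = 1$ then follows from the containment $\X_{A,B,C}(\Z_p; p \nmid x) \subseteq \X_{A,B,C}(\Z_p)$. Your sandwich $\Probvals{=0}{=0}{=0}^{(x)} \leq \Probzero \leq 1$ simply writes out a step the paper leaves implicit.
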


\begin{proof}
	By Lemma \ref{lem:p_divides_none}, whenever such $p \nmid ABC$ we have $\X_{A,B,C}(\Z_p ; p \nmid x) \neq \emptyset$.
\end{proof}

\begin{lemma}\label{lem:probAyz}
	Suppose $\pcond{p \nmid \gmn}$. For all $a \geq 1$ we have
    \[\Probvals{=a}{=0}{=0}^{(y)} = \Probvals{\geq 1}{=0}{=0}^{(y)} = \Probvals{=a}{=0}{=0}^{(z)} =  \Probvals{\geq 1}{=0}{=0}^{(z)} =  \frac1{\gcd(p-1,g_{mn})}.\]
\end{lemma}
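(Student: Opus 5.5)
By Lemma \ref{lem:p_divides_one}(i), whenever $p \mid A$ and $p \nmid BC$ (and $p \nmid \gmn$) the conditions $\X_{A,B,C}(\Z_p; p \nmid y) \neq \emptyset$ and $\X_{A,B,C}(\Z_p; p \nmid z) \neq \emptyset$ are equivalent. Both hold exactly when $-C/B \in (\F_p^\times)^{\gmn}$. That lemma was stated under the assumption $m \leq n$, but part (i) is symmetric in $(y,m)$ and $(z,n)$, so no assumption on the order of $m,n$ is needed here. The key observation is that this criterion depends only on the residues of $B$ and $C$ modulo $p$. It does not involve $A$ at all, beyond the condition $p \mid A$. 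So the plan is to reduce all four probabilities to a single probability about $B,C$, and then compute that probability by counting.

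First I would handle the independence in $A$. The sets $\{v_p(A) = a,\ v_p(B)=v_p(C)=0\}$ and $\{v_p(A)\geq 1,\ v_p(B)=v_p(C)=0\}$ are both products of a set of $A$'s with $(\Z_p^\times)^2$. The event in question is (set of $A$'s) $\times$ $E$, where
\[E = \{(B,C) \in (\Z_p^\times)^2 : -C/B \in (\F_p^\times)^{\gmn}\}.\]
Since $\mu_p$ is a product measure, in each case the conditional probability equals $\mu_p(E)/\mu_p((\Z_p^\times)^2)$. This shows all four quantities are equal to one common value.

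Next I would compute that value. The set $E$ is a union of residue classes modulo $p$, so the ratio can be computed in $(\F_p^\times)^2$. For each fixed $\bar B \in \F_p^\times$, the map $\bar C \mapsto -\bar C/\bar B$ is a bijection of $\F_p^\times$. Hence the proportion of $\bar C$ that land in $E$ equals the index-reciprocal $\#(\F_p^\times)^{\gmn}/\#\F_p^\times$. Because $\F_p^\times$ is cyclic of order $p-1$, the subgroup of $\gmn$-th powers has index $\gcd(p-1,\gmn)$. The proportion is therefore $1/\gcd(p-1,\gmn)$, as claimed.

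There is no real obstacle; this is routine. The only point that needs care is verifying that Lemma \ref{lem:p_divides_one}(i) applies uniformly for every $a \geq 1$. In particular, the Hensel lifting step uses only $p \nmid \gmn$, so that $p\nmid m$ or $p \nmid n$; it does not depend on the size of $v_p(A)$, because the $x$-term vanishes modulo $p$ regardless of $a$. One can also simply take $x=0$.
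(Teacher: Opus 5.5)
Your proposal is correct and follows the paper's argument: reduce via Lemma~\ref{lem:p_divides_one}(i) to the density of $(B,C)\in(\Z_p^\times)^2$ with $-C/B\in(\F_p^\times)^{\gmn}$, then count, since $-C/B$ runs over all of $\F_p^\times$ for each fixed $B$. Your additional remarks are accurate and fill in what the paper leaves implicit: the product-measure argument explaining why the answer is the same for $v_p(A)=a$ and for $v_p(A)\geq 1$, and the observation that part (i) of that lemma does not need $m\leq n$.
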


\begin{proof}
    By Lemma \ref{lem:p_divides_one}(i), it suffices to compute the density of $B,C \in \Z_p^\times$ for which $-C/B \in (\F_p^\times)^{\gmn}$. Since for each fixed $B$, $-C/B$ runs through every element of $\F_p^\times$, this is equal to $\#(\F_p^\times)^{\gmn} / \#\F_p^\times = \frac1{\gcd(p-1,g_{mn})}$.
\end{proof}

\begin{remark}\label{rem:probBxz}
    After reordering the variables, Lemma \ref{lem:probzero} can be used to compute $\Probvals{=0}{=0}{=0}^{(y)} = \Probvals{=0}{=0}{=0}^{(z)} =1$ for $p$ sufficiently large, with $\gcd(p-1,\gmn)$ in \eqref{eq:pt_count_hyp} replaced by $\gcd(p-1,\gln)$ and $\gcd(p-1,\glm)$, respectively. 
    Similarly, Lemma \ref{lem:probAyz} can be used to compute 
\begin{align*}
   \Probvals{=0}{=b}{=0}^{(x)} = \Probvals{=0}{\geq 1}{=0}^{(x)} = \Probvals{=0}{=b}{=0}^{(z)} = \Probvals{=0}{\geq 1}{=0}^{(z)} &=  \frac1{\gcd(p-1,g_{\ell n})} \text{ for all  } b \geq 1 \text{ if } \pcond{p \nmid \gln},\\
   \Probvals{=0}{=0}{=c}^{(x)} = \Probvals{=0}{=0}{\geq 1}^{(x)} = \Probvals{=0}{=0}{=c}^{(y)} = \Probvals{=0}{=0}{\geq 1}^{(y)} &=  \frac1{\gcd(p-1,g_{\ell m})}  \text{ for all  } c \geq 1 \text{ if } \pcond{p \nmid \glm}.
\end{align*}
\end{remark}

As a warmup, we can compute $\ProbC$ directly in the special case of $\ell = m$ with $\gmn = 1$. We will tackle the general case in the next section (see Lemma \ref{lem:probA}).

\begin{lemma}\label{lem:probC_l=m}
    Suppose \expcond{$\ell = m$, $\gmn=1$}, and \pcond{$p \nmid m$}. Then 
    \begin{align*}
        \Probvals{=0}{=0}{=c} = \Probvals{=0}{=0}{=c}^{(z)} &= \begin{cases}
            \frac1{\gcd(p-1,m)} & 1 \leq c < m,\\
            1 & c=m,\\
            \Probvals{=0}{=0}{=c-m}^{(z)} & c > m.
        \end{cases} 
    \end{align*}
    It follows that
    \[\ProbC = \Probvals{=0}{=0}{\geq 1}^{(z)} = \left(1 - \frac1{p^m}\right)^{-1}\left(\left(1 - \frac1{p^{m-1}}\right)\frac1{\gcd(p-1,m)} + \frac{p-1}{p^m} \right).\]
\end{lemma}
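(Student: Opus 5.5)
The plan is a case analysis on $c = v_p(C)$, with $p \nmid AB$ and $\ell = m$ throughout. Since $p \mid C$ while $p \nmid A,B$, no point of $\X_{A,B,C}(\Z_p)$ can have $p \mid x$ and $p \nmid y$, or vice versa. If $p$ divided both $x$ and $y$, primitivity would force $p \nmid z$, so $p^{m} \mid Cz^n$ would require $c \geq m$. I will organise everything around this observation.

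\emph{Case $1 \le c < m$.} By the observation, every point has $p \nmid x,y$, and then $p \nmid z$ as well: if $p \mid z$ we would be in the situation of Lemma~\ref{lem:p_divides_one}(ii) (with the roles of $A$ and $C$ swapped), whose argument rules out $p \mid x,y$. The remaining points have $p \nmid xyz$ after scaling, and these match the set $\X(\Z_p;p\nmid z)$ in the equality $\Probvals{=0}{=0}{=c} = \Probvals{=0}{=0}{=c}^{(z)}$. I will then apply Lemma~\ref{lem:p_divides_one}(i) with the variables permuted (roles $(A;B,C) \to (C;A,B)$, gcd $\glm = m$), which gives solubility iff $-B/A \in (\F_p^\times)^m$. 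Here Hensel applies because $p \nmid m$. Counting as in Lemma~\ref{lem:probAyz} gives $1/\gcd(p-1,m)$.

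\emph{Case $c = m$.} Points with $p \nmid xy$ exist iff $-B/A$ is an $m$-th power, exactly as above. Points with $p \mid x,y$ correspond, as in Lemma~\ref{lem:p_divides_one}(iii), to $A x'^m + B y'^m + p^{-m}C z^n = 0$ with $v_p(p^{-m}C) = 0$ and $p \nmid z$. By Lemma~\ref{lem:intermediate} (using $\gcd(m,n)=1$) we can take $x' = 0$ and solve $By'^m + C' z^n = 0$ with $y',z$ units; this gives a point with $p\nmid z$. Hence the probability is $1$. The main point to check is that such a point genuinely lies in $\X(\Z_p;p\nmid z)$; it does, since $p \nmid z$.

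\emph{Case $c > m$.} Points with $p \nmid xy$ again require $-B/A \in (\F_p^\times)^m$. Points with $p \mid x,y$ reduce via the substitution $(x,y)\mapsto (x/p,y/p)$ to $\X_{A,B,p^{-m}C}(\Z_p;p\nmid z)$, with valuation $c-m$. Conversely, $p \nmid xy$ points for $C$ do not contribute anything beyond the reduced problem. The reason is that $-B/A$ being an $m$-th power makes the reduced problem (if $c - m \ge 1$) soluble too, by the first case applied at valuation $c-m$; if $c-m=0$ it is soluble anyway. Hence the probability equals $\Probvals{=0}{=0}{=c-m}^{(z)}$. The only subtle step in the whole argument is confirming this monotonicity, namely that the event for $c$ equals the event for $c-m$. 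This is exactly the check just made.

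\emph{Summing.} Write $c = qm + r$. For $r \neq 0$ the probability is $1/\gcd(p-1,m)$, and for $r=0$ it is $1$. Weighting each $c\ge1$ by $\mu(v_p(C)=c)/\mu(v_p(C)\ge1) = (p-1)p^{-c}\cdot p/(p-1)\cdot(p-1)/p = (p-1)p^{-c}\cdot\frac{p}{1}\cdot\frac1p$, more precisely $(1-1/p)p^{-(c-1)}$, we get a geometric series. The total weight on $c \equiv 0 \pmod m$ is $(1-1/p)p^{-(m-1)}/(1-p^{-m}) = \frac{p-1}{p^m}(1-p^{-m})^{-1}$. The remaining weight is $(1-p^{-(m-1)})(1-p^{-m})^{-1}$. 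Combining these yields the stated formula for $\ProbC$.
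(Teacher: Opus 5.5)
Your plan is essentially the paper's: the same three cases in $c$, the same bijection $(x,y)\mapsto(x/p,y/p)$ for $c>m$, and Lemma~\ref{lem:intermediate} with $x'=0$ for $c=m$. Your summation by the residue of $c$ modulo $m$ is just an unrolled version of the paper's self-referential equation for $\ProbC$ in terms of $\Probvals{=0}{=0}{\geq 1}^{(z)}$. There is, however, one false step.

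In the case $1\le c<m$ you assert that every point of $\X_{A,B,C}(\Z_p)$ has $p\nmid z$. This is wrong. If $-B/A\in(\F_p^\times)^m$, then, since $p\nmid m$, we have $-B/A=D^m$ for some $D\in\Z_p^\times$, and $[D:1:0]$ is a primitive point with $z=0$. Your justification never addresses $z$: the permuted Lemma~\ref{lem:p_divides_one}(ii) only excludes $p\mid x,y$. Rescaling by $\Z_p^\times$ cannot change which coordinates are divisible by $p$ either. So the equality $\Probvals{=0}{=0}{=c}=\Probvals{=0}{=0}{=c}^{(z)}$, which is part of the statement, is not yet justified. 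What is true, and what you need, is that whenever some point exists, \emph{some} point has $p\nmid z$.

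The missing ingredient is a lift with $z$ fixed, which is what the paper uses. Take $x_0,y_0\in\Z_p^\times$ with $Ax_0^m+By_0^m\equiv 0\pmod p$, set $z=1$, and apply Hensel's lemma in $x$ to $f(x)=Ax^m+By_0^m+C$. Then $f(x_0)\equiv 0$ because $p\mid C$, and $f'(x_0)=mAx_0^{m-1}\not\equiv 0$ because $p\nmid mA$. This gives a point $[x:y_0:1]$ for every $c\ge 1$.

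The same lift also repairs your $c>m$ step. There you cite ``the first case applied at valuation $c-m$'' to produce a point with $p\nmid z$ on $\X_{A,B,p^{-m}C}$. That citation only covers $1\le c-m<m$, so it is out of range for, e.g., $c=2m+1$, and it inherits the false claim in any case. With this lift in place, the rest of your argument, including the geometric-series bookkeeping, goes through and matches the paper.
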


\begin{proof}
    Suppose $1 \leq c < m$. Then by Lemma \ref{lem:p_divides_one}(ii) (with the coordinates permuted), for any $A,B,C \in \Z_p$ satisfying $(v_p(A),v_p(B),v_p(C)) = (0,0,c)$, we have $\X_{A,B,C}(\Z_p) = \X_{A,B,C}(\Z_p; p \nmid x)$. By Lemma \ref{lem:p_divides_one}(i), it suffices to compute the density of $A,B \in \Z_p^\times$ with $-B/A \in (\F_p^\times)^m$, which is $\frac1{\gcd(p-1,m)}$ by Lemma \ref{lem:probAyz}. Moreover, any nontrivial solution to $Ax^m + By^m \equiv 0 \pmod{p}$ can be lifted to $[x:y:z] \in \X_{A,B,C}$ with $p \nmid z$, so we have $\Probvals{=0}{=0}{=c} = \Probvals{=0}{=0}{=c}^{(z)}$ in this case.

    If $(v_p(A),v_p(B),v_p(C)) = (0,0,m)$, then $\X_{A,B,C}(\Z_p; p \mid y,z)$ is in bijection with $\X_{A,B,p^{-m}C}(\Z_p; p \nmid z)$. However, we can apply Lemma \ref{lem:intermediate} to see the latter is nonempty. Thus $\Probvals{=0}{=0}{=m} = \Probvals{=0}{=0}{=m}^{(z)} = 1$.

    Finally, we claim if $(v_p(A),v_p(B),v_p(C)) = (0,0,c)$ for $c > m$, then $\X_{A,B,C}(\Z_p; p \nmid z) \neq \emptyset$ if and only if $\X_{A,B,p^{-m}C}(\Z_p; p \nmid z) \neq \emptyset$. This is because both have points with $p \nmid x,y$ if and only if the congruence $Ax^m + By^m \equiv 0 \pmod{p}$ has a solution. For points with $p \mid y,z$, we use the bijection in Lemma \ref{lem:p_divides_one}(iii).

    For $C \in p\Z_p$, the probability that $v_p(C)=i > 0$ is $\frac{p-1}{p^i}$. The probability $v_p(C) > i$ is $\frac1{p^i}$. Thus we have    
    \begin{align*}
        \ProbC &= \sum_{i\geq 1} \frac{p-1}{p^i} \Probvals{=0}{=0}{=i}\\
        &= \sum_{1 \leq i < m} \frac{p-1}{p^i} \Probvals{=0}{=0}{=i} + \frac{p-1}{p^m}\Probvals{=0}{=0}{=m} + \frac1{p^m} \Probvals{=0}{=0}{\geq m+1}\\
        &=  \left(1 - \frac1{p^{m-1}}\right)\frac1{\gcd(p-1,m)} + \frac{p-1}{p^m} + \frac1{p^m} \Probvals{=0}{=0}{\geq 1}^{(z)}.
    \end{align*}
    Repeating this calculation for $\Probvals{=0}{=0}{\geq 1}^{(z)}$, we find $\ProbC = \Probvals{=0}{=0}{\geq 1}^{(z)}$. Rearranging gives the result.
\end{proof}

\section{Relations between probabilities}
\label{sec:relations}

If we can determine $\ProbA$ and $\ProbAB$, then we can find $\ProbB$, $\ProbC$, $\ProbAC$, and $\ProbBC$ by permuting the variables. In this section we establish formulae for $\ProbA$ and $\ProbAB$ in terms of the auxiliary probabilities from Definition \ref{def:aux}. We will also need an additional auxiliary probability with an extra condition.

\begin{definition}\label{def:aux_notpower}
    For $a,b,c \geq 1$, let $\ProbvalsBCnotpower{=a}{=b}{=c}^{(x)}$ be the probability that $\X_{A,B,C}(\Z_p; p \nmid x) \neq \emptyset$
   	given that 
    \[(A,B,C) = \left(p^a A_0, p^b B_0, p^c C_0\right) \text{ for } A_0, B_0, C_0 \in \Z_p^\times\]
    and 
    \[-C_0/B_0 \notin (\F_p^\times)^{\gmn}.\]
    That is,
    \[\ProbvalsBCnotpower{=a}{=b}{=c}^{(x)} = \frac{\mu_p \left(\left\{ (A,B,C) \in \Z_p^3 : \substack{(v_p(A), v_p(B),v_p(C)) = (a,b,c),\\ -C_0/B_0 \notin (\F_p^\times)^{\gmn},\\ \X_{A,B,C}(\Z_p; p \nmid x) \neq \emptyset}  \right\} \right)}{\mu_p \left(\left\{ (A,B,C) \in \Z_p^3 : \substack{(v_p(A), v_p(B),v_p(C))=(a,b,c),\\ -C_0/B_0 \notin (\F_p^\times)^{\gmn}}  \right\} \right)}.\]
    Similarly, let
    \[\ProbvalsBCnotpower{\geq 1}{=b}{=c}^{(x)} = \frac{\mu_p \left(\left\{ (A,B,C) \in \Z_p^3 : \substack{p \mid A,\ (v_p(B),v_p(C))=(b,c),\\ -C_0/B_0 \notin (\F_p^\times)^{\gmn},\\ \X_{A,B,C}(\Z_p; p \nmid x) \neq \emptyset}  \right\} \right)}{\mu_p \left(\left\{ (A,B,C) \in \Z_p^3 : \substack{p \mid A,\ (v_p(B),v_p(C))=(b,c),\\ -C_0/B_0 \notin (\F_p^\times)^{\gmn}}  \right\} \right)}.\]
\end{definition}

\begin{lemma}\label{lem:ProbBCnotpower_easy}
    Suppose \pcond{$p \nmid \glm \gln$} and \pcond{$\Probvals{=0}{=0}{=0}^{(x)} = 1$}. Then we have
    \begin{align*}
        \ProbvalsBCnotpower{=0}{=0}{=0}^{(x)} &= 1,\\
        \ProbvalsBCnotpower{=a}{=0}{=0}^{(x)} &= 0 & \text{for } 1 \leq a < \min\{m,n\},\\        
        \ProbvalsBCnotpower{=0}{=b}{=0}^{(x)} &= \frac1{\gcd(p-1,g_{\ell n})} & \text{for } b \geq 1,\\
        \ProbvalsBCnotpower{=0}{=0}{=c}^{(x)} &= \frac1{\gcd(p-1,g_{\ell m})} & \text{for } c \geq 1.
    \end{align*}
\end{lemma}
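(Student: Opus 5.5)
The plan is to treat the four identities separately. In each case we fix the residues of the unit parts and use the fact that the extra condition $-C_0/B_0 \notin (\F_p^\times)^{\gmn}$ constrains only $B_0$ and $C_0$. When the $x$-solubility criterion depends on the residue of $A$, the two conditions are independent. Throughout we interpret $\ProbvalsBCnotpower{=a}{=b}{=c}^{(x)}$ for $a,b,c \geq 0$ by the same formula as in Definition~\ref{def:aux_notpower}. We assume the conditioning set has positive measure, i.e.\ $\gcd(p-1,\gmn) > 1$; otherwise the statement is vacuous.

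For the first identity, the hypothesis $\Probvals{=0}{=0}{=0}^{(x)} = 1$ says the set of $(A,B,C) \in (\Z_p^\times)^3$ with $\X_{A,B,C}(\Z_p; p \nmid x) = \emptyset$ has measure zero. Restricting to the positive-measure subset where $-C_0/B_0 \notin (\F_p^\times)^{\gmn}$, the conditional probability is still $1$.

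For the second identity, let $v_p(A) = a$ with $1 \leq a < \min\{m,n\}$, and $B,C$ units with $-C/B \notin (\F_p^\times)^{\gmn}$. Suppose $[x:y:z]$ is a solution with $p \nmid x$. Reducing modulo $p$ gives $By^m + Cz^n \equiv 0 \pmod p$. If $p \mid y$, then $p \mid z$, so $p^{\min\{m,n\}} \mid By^m + Cz^n = -Ax^\ell$. This contradicts $v_p(Ax^\ell) = a < \min\{m,n\}$. Hence $p \nmid y,z$. As in the proof of Lemma~\ref{lem:p_divides_one}(i), we can write $(y^{m/\gmn})^{\gmn} \equiv (-C/B)(z^{n/\gmn})^{\gmn}$, which forces $-C/B \in (\F_p^\times)^{\gmn}$. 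This is a contradiction. The necessity direction uses no hypothesis on $p \mid \gmn$, so the probability is $0$.

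For the third identity, take $v_p(B) = b \geq 1$ with $A,C$ units. Lemma~\ref{lem:p_divides_one}(i), with the roles of the variables permuted and using $p \nmid \gln$, shows that $\X_{A,B,C}(\Z_p; p\nmid x) \neq \emptyset$ if and only if $-C/A \in (\F_p^\times)^{\gln}$. This is exactly the computation in Remark~\ref{rem:probBxz}. Now fix $B_0$ and $C_0$ satisfying the non-power condition. As $A$ ranges over $\Z_p^\times$, the residue of $-C/A$ is uniformly distributed in $\F_p^\times$, and the conditioning does not involve $A$. Integrating over $A$ first therefore gives conditional probability $1/\gcd(p-1,\gln)$ for every admissible $(B_0,C_0)$, and hence overall.

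The fourth identity follows in the same way. For $v_p(C) = c \geq 1$ and $p \nmid \glm$, $x$-solubility is equivalent to $-B/A \in (\F_p^\times)^{\glm}$. Again we fix $B_0,C_0$ and let $A$ vary uniformly. The only point needing care in the whole argument is this independence step, namely checking that the non-power condition on $C_0/B_0$ does not bias the criterion; since the criterion depends on $A$ and the condition does not, the step is immediate by Fubini.
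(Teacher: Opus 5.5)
Your proof is correct and follows essentially the same route as the paper. The first identity comes from the hypothesis. The second uses the content of Lemma~\ref{lem:p_divides_one}(i)--(ii), which you rederive directly, correctly noting that the necessity direction does not need $p \nmid \gmn$. The last two use the independence of the $A$-dependent power-residue criterion from the non-power condition on $-C_0/B_0$.
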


\begin{proof}
    The first statement follows from the hypothesis $\Probvals{=0}{=0}{=0}^{(x)} = 1$. The second follows from Lemma \ref{lem:p_divides_one}, since $-C/B \notin (\F_p^\times)^{\gmn}$ as part of the condition defining $\ProbvalsBCnotpower{*_A}{*_B}{*_C}$.
    
    For the third, we follow a similar argument as in Lemma \ref{lem:probAyz}: we need to compute the probability that $A,C \in \Z_p^\times$ satisfy $-C/A \in (\F_p^\times)^{\gln}$, but now subject to the condition that $-C/B_0 \notin (\F_p^\times)^{\glm}$. However, these conditions are independent from one another, so we get $\frac1{\gcd(p-1,g_{\ell n})}$ as in Lemma \ref{lem:probAyz} (with the variables appropriately permuted). The last statement follows after swapping the roles of $y$ and $z$.
\end{proof}

We now establish relations between $\ProbA$, $\ProbvalsBCnotpower{=0}{=0}{n-m}^{(x)}$, and $\ProbvalsBCnotpower{\geq 1}{=0}{=n-m}^{(x)}$, and between $\ProbAB$, $\Probvals{\geq 1}{=0}{=c}^{(x)}$, $\Probvals{=0}{\geq 1}{=c}^{(y)}$, $\Probvals{=0}{=0}{=0}^{(x \text{ or } y)}$,  $\Probvals{\geq 1}{=0}{=0}^{(x)}$, and $\Probvals{=0}{\geq 1}{=0}^{(y)}$.

\begin{lemma}\label{lem:probA}
	Suppose $\pcond{p \nmid \glm\gln\gmn}$ and assume $\expcond{m \leq n}$. We have
	\[
	\Probvals{=a}{=0}{=0} = \Probvals{=a}{=0}{=0}^{(x)} = 
		\begin{cases}
			\frac1{\gcd(p-1,g_{mn})} & 1 \leq a < m, \\
			\frac1{\gcd(p-1,g_{mn})} + \left(1 - \frac1{\gcd(p-1,g_{mn})}\right)\ProbvalsBCnotpower{=a-m}{=0}{=n-m}^{(x)} & a \geq m.
		\end{cases}
	\]
	It follows that 
	\begin{align*}
		\ProbA = \Probvals{\geq 1}{=0}{=0}^{(x)} &= \frac{1}{\gcd(p-1,g_{mn})} + \left(1 - \frac{1}{\gcd(p-1,g_{mn})} \right)\left(\frac{p-1}{p^m}\ProbvalsBCnotpower{=0}{=0}{=n-m}^{(x)} + \frac1{p^m}  \ProbvalsBCnotpower{\geq 1}{=0}{=n-m}^{(x)} \right),\\
        \ProbvalsBCnotpower{\geq 1}{=0}{=0}^{(x)} &= \frac{p-1}{p^m} \ProbvalsBCnotpower{=0}{=0}{=n-m}^{(x)} + \frac1{p^m} \ProbvalsBCnotpower{\geq 1}{=0}{=n-m}^{(x)}.
	\end{align*}
\end{lemma}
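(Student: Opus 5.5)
Fix $a\ge 1$ and $(A,B,C)$ with $(v_p(A),v_p(B),v_p(C))=(a,0,0)$, and write $A=p^aA_0$. A $\Z_p$-point has either $p\nmid y$ (equivalently $p\nmid z$) or $p\mid y,z$. In the first case $p\nmid x$ as well; otherwise $p\mid x,y,z$, contradicting primitivity. So I will split according to whether $-C/B\in(\F_p^\times)^{\gmn}$.

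If $-C/B\in(\F_p^\times)^{\gmn}$, Lemma \ref{lem:p_divides_one}(i) gives a point with $p\nmid y,z$. I then need to check that it can be taken with $p\nmid x$. Since $p\nmid \gmn$, the Hensel argument in that lemma lets me fix any $x\in\Z_p^\times$ and solve $By^m+Cz^n=-Ax^\ell$ with $p\nmid y,z$, because the congruence mod $p$ is the same. If $-C/B\notin(\F_p^\times)^{\gmn}$, there are no points with $p\nmid y$, so every point has $p\mid y,z$ and hence $p\nmid x$. For $a<m$, Lemma \ref{lem:p_divides_one}(ii) shows there are no such points. For $a\ge m$, Lemma \ref{lem:p_divides_one}(iii) puts them in bijection with $\X_{p^{a-m}A_0,B,p^{n-m}C}(\Z_p;p\nmid x)$; the bijection $[x:y:z]\mapsto[x:y/p:z/p]$ preserves $p\nmid x$. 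The transformed triple has valuations $(a-m,0,n-m)$ and the same units $B_0=B$, $C_0=C$, so the non-power condition is exactly the one in Definition \ref{def:aux_notpower}. The event $-C/B\in(\F_p^\times)^{\gmn}$ has probability $1/\gcd(p-1,\gmn)$ by Lemma \ref{lem:probAyz}, and it is independent of $A_0$. Conditioning on this event, and using that the pushforward of Haar measure under the rescaling is again the uniform conditional measure, gives both stated cases and shows that $\Probvals{=a}{=0}{=0}$ equals its $(x)$-version.

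To get $\ProbA$, I will sum over $a$ with weights $\frac{p-1}{p^a}\big/\frac{1}{p}\cdot\frac1p$. Equivalently, conditional on $p\mid A$, $v_p(A)=a$ has probability $(p-1)/p^{a}$ relative to $\mu_p(p\Z_p)=1/p$. After this normalization the weights for $a\ge1$ sum to $1$, giving the constant term $1/\gcd(p-1,\gmn)$. For the non-power part, the range $a<m$ contributes $0$. The range $a\ge m$ contributes a sum I will regroup: the term $a=m$ gives $\frac{p-1}{p^m}\ProbvalsBCnotpower{=0}{=0}{=n-m}^{(x)}$, and the tail $a>m$ has total weight $\frac1{p^m}$ and averages $\ProbvalsBCnotpower{=a-m}{=0}{=n-m}^{(x)}$ over $a-m\ge1$ with exactly the conditional weights defining $\ProbvalsBCnotpower{\geq1}{=0}{=n-m}^{(x)}$. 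The same computation, run entirely under the non-power condition on $(B_0,C_0)$, yields the last identity. In that setting the first branch is absent, and the $a<m$ terms vanish as in Lemma \ref{lem:ProbBCnotpower_easy}.

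The main obstacle is the measure-theoretic bookkeeping: verifying that the rescaling $(A,B,C)\mapsto(p^{-m}A,B,p^{n-m}C)$ carries the conditional Haar measure on the slice $v_p(A)=a$ to that on $v_p(A)=a-m$, and that the non-power condition is preserved. Both hold because only $A$'s valuation and $C$'s valuation shift while the units $A_0,B_0,C_0$ stay uniform. A smaller point is the degenerate case $n=m$, where $n-m=0$; the argument is the same.
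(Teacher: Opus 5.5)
Your proof is correct and follows the paper's argument. You split on whether $-C/B \in (\F_p^\times)^{\gmn}$ and use Lemma \ref{lem:p_divides_one}(i)--(iii), with the rescaling $A \mapsto p^{-m}A$, $C \mapsto p^{n-m}C$, to get the per-valuation formula and its equality with the $(x)$-version; you then sum against the weights $\frac{p-1}{p^a}$ and regroup the tail $a>m$ into $\frac{1}{p^m}\ProbvalsBCnotpower{\geq 1}{=0}{=n-m}^{(x)}$, exactly as the paper does. One cosmetic slip: ``in the first case $p\nmid x$ as well'' should refer to the second case ($p \mid y,z$), and your later Hensel argument, which fixes an arbitrary unit $x$, correctly handles the first case.
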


\begin{proof}
    We have by Lemma \ref{lem:p_divides_one}(i) and the definition of $\ProbvalsBCnotpower{=a}{=0}{=0}^{(x)}$
    \[\Probvals{=a}{=0}{=0} = \frac1{\gcd(p-1,g_{mn})} + \left(1 - \frac1{\gcd(p-1,g_{mn})}\right)\ProbvalsBCnotpower{=a}{=0}{=0}^{(x)}.\]
    If $1 \leq a < m$ then Lemma \ref{lem:ProbBCnotpower_easy} gives $\ProbvalsBCnotpower{=a}{=0}{=0}^{(x)} = 0$. If $a \geq m$, then Lemma \ref{lem:p_divides_one}(iii) gives $\ProbvalsBCnotpower{=a}{=0}{=0}^{(x)} = \ProbvalsBCnotpower{=a-m}{=0}{=n-m}^{(x)}$. The same argument gives the same formulae for $\Probvals{=a}{=0}{=0}^{(x)}$.

    For the second statement, we use that the probability of $v_p(A) = a$ is $\frac{p-1}{p^a}$ to write
    \begin{align*}
        \ProbA &= \sum_{a \geq 1} \frac{p-1}{p^a} \Probvals{=a}{=0}{=0}\\
        &= \frac{1}{\gcd(p-1,g_{mn})} + \left(1 - \frac1{\gcd(p-1,g_{mn})}\right) \sum_{a \geq 1} \frac{p-1}{p^a} \ProbvalsBCnotpower{=a}{=0}{=0}^{(x)} \\
        &= \frac{1}{\gcd(p-1,g_{mn})} + \left(1 - \frac{1}{\gcd(p-1,g_{mn})} \right)\left(\frac{p-1}{p^m}\ProbvalsBCnotpower{=0}{=0}{=n-m}^{(x)} + \frac1{p^m}  \ProbvalsBCnotpower{\geq 1}{=0}{=n-m}^{(x)} \right).
    \end{align*}
    The same calculation gives the same result for $\Probvals{\geq 1}{=0}{=0}^{(x)}$.

    Similarly,
    \begin{align*}
        \ProbvalsBCnotpower{\geq 1}{=0}{=0}^{(x)} &= \sum_{a \geq 1} \frac{p-1}{p^a} \ProbvalsBCnotpower{=a}{=0}{=0}^{(x)} =  \frac{p-1}{p^m} \ProbvalsBCnotpower{=0}{=0}{=n-m} + \frac1{p^m} \ProbvalsBCnotpower{\geq 1}{=0}{=n-m}^{(x)},
    \end{align*}
    giving the last statement.
\end{proof}

\begin{lemma}\label{lem:probAB}
    We have
	\begin{align*}
		\ProbAB = \sum_{1 \leq i < n} & \left( \frac{(p-1)^2}{p^{2i}} \Probvals{=0}{=0}{=n-i}^{(x)} + \frac{p-1}{p^{2i}} \left( \Probvals{\geq 1}{=0}{=n-i}^{(x)} + \Probvals{=0}{\geq 1}{=n-i}^{(y)} \right) \right) \\
		&+ \frac{(p-1)^2}{p^{2n}} \Probvals{=0}{=0}{=0} + \frac{p-1}{p^{2n}}\left( \Probvals{\geq 1}{=0}{=0}^{(x)} + \Probvals{=0}{\geq 1}{=0}^{(y)} \right) + \frac1{p^{2n}}\ProbAB.
	\end{align*}
\end{lemma}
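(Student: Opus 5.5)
We condition on $(A,B,C)$ with $p \mid A$, $p \mid B$, $p \nmid C$, and set $i = \min\{v_p(A), v_p(B), n\}$. By Lemma~\ref{lem:p_divides_two}, $\X_{A,B,C}(\Z_p)\neq\emptyset$ if and only if
\[
\X_{p^{-i}A,\,p^{-i}B,\,p^{n-i}C}(\Z_p;\ p\nmid x \text{ or } p\nmid y)\neq\emptyset .
\]
So we partition the conditioning set according to the value of $i$ and the pair $(v_p(A)-i,\, v_p(B)-i)$, and apply this reduction on each piece.

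\textbf{The case $1\le i<n$.} Here $\min\{v_p(A),v_p(B)\}=i$, so after the reduction at least one of $A'=p^{-i}A$, $B'=p^{-i}B$ is a unit, and $C'=p^{n-i}C$ has valuation exactly $n-i\ge 1$. There are three subcases.
\begin{enumerate}[label=(\alph*)]
    \item $v_p(A')=v_p(B')=0$. We need a point with $p\nmid x$ or $p\nmid y$. Since $p\mid C'$, any point with $p\nmid x$ forces $p\nmid y$ (reducing the equation mod $p$), and vice versa. So the relevant probability is exactly $\Probvals{=0}{=0}{=n-i}^{(x)}$.
    \item $v_p(A')\ge1$ and $v_p(B')=0$. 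Any point has $p\mid By^m$'s complement, i.e.\ $p\nmid y$ would force $p\nmid x$ or $p\mid$ a unit multiple of $y^m$ mod $p$. More precisely, $p\nmid x$ would force $p\mid B'y^m$, hence $p\mid y$; checking both ways, the condition ``$p\nmid x$ or $p\nmid y$'' reduces to ``$p\nmid x$''. This gives $\Probvals{\geq 1}{=0}{=n-i}^{(x)}$. (The exact reduction will be verified from the definition via the congruence mod $p$, which I expect to sort out in the write-up.)
    \item $v_p(A')=0$ and $v_p(B')\ge1$. Symmetrically, this gives $\Probvals{=0}{\geq 1}{=n-i}^{(y)}$.
\end{enumerate}
In each subcase the pushforward of Haar measure under $(A,B,C)\mapsto(A',B',C')$ is the normalized Haar measure on the target valuation class, so conditional probabilities transfer.

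\textbf{The case $i=n$.} Here $v_p(A),v_p(B)\ge n$, and $C'=C$ is a unit. Splitting by whether $v_p(A)=n$ and whether $v_p(B)=n$:
\begin{itemize}
    \item If both equal $n$, every point of $\X_{A',B',C}$ trivially has $p\nmid x$ or $p\nmid y$ unless $x,y$ are both divisible by $p$, which forces $p\mid z$. So the probability is the unconditioned $\Probvals{=0}{=0}{=0}$.
    \item If exactly one of them exceeds $n$, the same argument as in (b), (c) gives $\Probvals{\geq1}{=0}{=0}^{(x)}$ and $\Probvals{=0}{\geq1}{=0}^{(y)}$.
    \item If both exceed $n$, then $A',B'\in p\Z_p$ and $C$ is a unit, which is again the $\ProbAB$ situation, producing the self-referential term.
\end{itemize}

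\textbf{Weights.} Conditional on $p\mid A,B$, the variable $v_p(A)-1$ is geometric, and similarly for $B$, independently. Hence:
\begin{itemize}
    \item the probability that $v_p(A)=v_p(B)=i$ is $(p-1)^2/p^{2i}$ after renormalization;
    \item the probability that $v_p(A)>i=v_p(B)$ is $(p-1)/p^{2i}$, and likewise with $A$ and $B$ swapped;
    \item the probability that both exceed $n$ is $1/p^{2n}$.
\end{itemize}
The valuation $v_p(A)>i$ is $1/p^{i}$ times the tail, which matches the stated coefficients. Summing over the cases yields the formula.

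\textbf{Main obstacle.} I expect the main obstacle to be the careful coordinate reduction in the mixed subcases, namely justifying that ``$p\nmid x$ or $p\nmid y$'' collapses to a single coordinate condition. The plan is to argue directly from the equation mod $p$, using that exactly one of $A',B'$ is a unit and that $p\mid C'$ (respectively $p\nmid C$).
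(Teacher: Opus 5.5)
Your decomposition is the same as the paper's: split by $i=\min\{v_p(A),v_p(B),n\}$, reduce with Lemma~\ref{lem:p_divides_two}, and weight by the geometric valuation distributions. The following parts are all correct:
\begin{itemize}
\item your weights;
\item the cases $1\le i<n$;
\item the case $v_p(A)=v_p(B)=n$;
\item the self-referential term.
\end{itemize}
In (b) your reasoning is muddled, but the conclusion is right. The clean reason is that $p\mid A'$ and $p\mid C'$ force $p\mid y$ for every point, independently of $x$. So ``$p\nmid x$ or $p\nmid y$'' is literally ``$p\nmid x$''.

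The step that fails as written is the mixed case with $i=n$, say $v_p(A)>n=v_p(B)$. You appeal to ``the same argument as in (b), (c)'', but that argument used $p\mid C'$. Here $C'=C$ is a unit, so nothing forces $p\mid y$. In fact $\X_{A',B',C}(\Z_p;\,p\nmid x\text{ or }p\nmid y)$ can contain points with $p\mid x$ and $p\nmid y$, e.g.\ $[0:y:z]$ with $B'y^m+Cz^n=0$.

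What the congruence modulo $p$ actually gives is $p\mid y\iff p\mid z$. By primitivity every point then qualifies, so the conditional probability is the unrestricted $\Probvals{\geq 1}{=0}{=0}=\ProbA$, not a priori $\Probvals{\geq 1}{=0}{=0}^{(x)}$.

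Closing the gap needs a lifting input, not just a mod-$p$ argument. You can either:
\begin{itemize}
\item cite the identity $\ProbA=\Probvals{\geq 1}{=0}{=0}^{(x)}$ from Lemma~\ref{lem:probA}; or
\item argue directly: if $B'y_0^m+Cz_0^n\equiv 0\pmod p$ with $y_0,z_0$ units, then for any unit $x$ you can Hensel-lift in $y$ or $z$ to a point with $p\nmid x$.
\end{itemize}
The direct argument uses $p\nmid\gcd(m,n)$, and symmetrically $p\nmid\gcd(\ell,n)$ for the other mixed case. These are part of the standing hypotheses under which the lemma is applied.

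The paper's own proof also records this case only as $\Probvals{=a-n}{=b-n}{=0}^{(x\text{ or }y)}$ and makes the identification silently. Your stated justification, however, would not supply it.
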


\begin{proof}
    For $a,b \geq 1$ and $i = \min\{a,b,n\}$, Lemma \ref{lem:p_divides_two} gives that
    \[\Probvals{=a}{=b}{0} = \Probvals{=a-i}{=b-i}{=n-i}^{(x \text{ or } y)} = \begin{cases}
        \Probvals{=0}{=0}{=n-a}^{(x)} = \Probvals{=0}{=0}{=n-a}^{(y)} & \text{if } \min\{a,b,n\} = a = b < n,\\[.5em]
        \Probvals{=0}{=b-a}{=n-a}^{(y)} & \text{if } \min\{a,b,n\} = a < b,\\[.5em]
        \Probvals{=a-b}{=0}{=n-b}^{(x)} & \text{if } \min\{a,b,n\} = b < a,\\[.5em]
        \Probvals{=a-n}{=b-n}{=0}^{(x \text{ or } y)} & \text{if } \min\{a,b,n\} = n.
    \end{cases}\]
    We also observe $\Probvals{=0}{=0}{=0}^{(x \text{ or } y)} = \Probzero$. Taking
    \begin{align*}
        \ProbAB &= \sum_{1 \leq i < n} \left( \frac{(p-1)^2}{p^{2i}} \Probvals{=i}{=i}{=0} + \sum_{a > i} \frac{(p-1)^2}{p^{i+a}} \Probvals{=a}{=i}{=0} + \sum_{b > i} \frac{(p-1)^2}{p^{i+b}} \Probvals{=i}{=b}{=0} \right) + \frac1{p^{2n}} \Probvals{\geq n+1}{\geq n+1}{=0},
    \end{align*}
    applying the relations above, and rearranging we arrive at the desired formula. 
\end{proof}

Finally, we establish relations between auxiliary probabilities of the form $\Probvals{\geq 1}{=0}{=c}^{(x)}$ and $\ProbvalsBCnotpower{\geq 1}{=0}{=c}^{(x)}$ for $0 < c < n$, and $\Probvals{\geq 1}{=b}{=0}^{(x)}$ and $\ProbvalsBCnotpower{\geq 1}{=b}{=0}^{(x)}$ for $0 < b < m$. It suffices to do so for the former, from which we can deduce formulae for the latter by swapping $y$ and $z$. The proof is another application of Lemma \ref{lem:p_divides_two}.

\begin{lemma}\label{lem:auxiliary}
	For $0 < c < n$ we have
	\begin{align*}
		\Probvals{\geq 1}{=0}{=c}^{(x)} &= \begin{cases}
			\frac{p-1}{p^c} \Probvals{=0}{=m-c}{=0}^{(x)} + \frac1{p^c} \Probvals{\geq 1}{=m-c}{=0}^{(x)} & \text{if } c < m\\[.5em]
			\frac{p-1}{p^m} \Probvals{=0}{=0}{=0}^{(x)} + \frac1{p^m} \Probvals{\geq 1}{=0}{=0}^{(x)} & \text{if } c = m\\[.5em]
			\frac{p-1}{p^m} \Probvals{=0}{=0}{=c-m}^{(x)} + \frac1{p^m} \Probvals{\geq 1}{=0}{=c-m}^{(x)} & \text{if } c > m
		\end{cases}\\
		\ProbvalsBCnotpower{\geq 1}{=0}{=c}^{(x)} &= \begin{cases}
			\frac{p-1}{p^c} \ProbvalsBCnotpower{=0}{=m-c}{=0}^{(x)} + \frac1{p^c} \ProbvalsBCnotpower{\geq 1}{=m-c}{=0}^{(x)} & \text{if } c < m\\[.5em]
			\frac{p-1}{p^m} \ProbvalsBCnotpower{=0}{=0}{=0}^{(x)} + \frac1{p^m} \ProbvalsBCnotpower{\geq 1}{=0}{=0}^{(x)} & \text{if } c = m\\[.5em]
			\frac{p-1}{p^m} \ProbvalsBCnotpower{=0}{=0}{=c-m}^{(x)} + \frac1{p^m} \ProbvalsBCnotpower{\geq 1}{=0}{=c-m}^{(x)} & \text{if } c > m. 
		\end{cases}
	\end{align*}
\end{lemma}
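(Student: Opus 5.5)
Throughout we fix $0<c<n$ and condition on $v_p(B)=0$, $v_p(C)=c$, $p\mid A$. We decompose according to the exact value $a=v_p(A)\geq 1$ and use Lemma \ref{lem:p_divides_two}, with the roles of the variables permuted so that the two coefficients divisible by $p$ are $A$ and $C$ and the unit coefficient is $B$. In this setting, any point $[x:y:z]\in\X_{A,B,C}(\Z_p)$ has $p\mid y$, because $By^m\equiv 0 \pmod p$. Set $i=\min\{a,c,m\}$. For a point with $p\nmid x$, write $y=py'$; dividing the equation by $p^i$ gives a point $[x:y':z]$ on $\X_{p^{-i}A,\,p^{m-i}B,\,p^{-i}C}$. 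The condition $p\nmid x$ carries over to this point, since the $x$-coordinate is unchanged. The substitution is reversible, so we obtain a bijection between $\X_{A,B,C}(\Z_p;p\nmid x)$ and $\X_{p^{-i}A,p^{m-i}B,p^{-i}C}(\Z_p;p\nmid x)$.

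First, the bijection does more than show one set is nonempty exactly when the other is. It also identifies the two conditional measures. Multiplying $A$ and $C$ by fixed powers of $p$, and $B$ by $p^{m-i}$, is a measure-scaling bijection between the relevant valuation strata. Since the unit parts $A_0,B_0,C_0$ are unchanged, the conditional probabilities agree. In particular, the extra condition $-C_0/B_0\notin(\F_p^\times)^{\gmn}$ is preserved, so the same identities hold for $\ProbBCnotpower$. Next we sum over $a$, with weight $\frac{p-1}{p^a}$ for $v_p(A)=a$. There are three cases.

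\textbf{Case $c<m$.} For $a\geq c$ we have $i=c$, and the point lands in the stratum $(a-c,\,m-c,\,0)$. For $a<c$ we have $i=a$, so the new coefficients have $v_p(A')=0$, $v_p(B')=m-a>0$ and $v_p(C')=c-a>0$. Then every point has $p\mid y,z$, hence $p^{\min}\mid Ax^\ell$, which forces $p\mid x$. So these strata contribute $0$; the same argument is Lemma \ref{lem:p_divides_one}(ii) in permuted form. Summing over $a\geq c$, the term $a=c$ gives $\frac{p-1}{p^c}\Probvals{=0}{=m-c}{=0}^{(x)}$. The terms $a>c$ regroup, after reindexing $a'=a-c\geq 1$, as $\frac1{p^c}\Probvals{\geq 1}{=m-c}{=0}^{(x)}$.

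\textbf{Case $c=m$.} Here $i=m$ for $a\geq m$, and the strata with $a<m$ vanish by the same argument as in the previous case. Summing gives the stated formula with the stratum $(0,0,0)$ and $\Probvals{\geq 1}{=0}{=0}^{(x)}$.

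\textbf{Case $c>m$.} For $a\geq m$ we have $i=m$, and the point lands in the stratum $(a-m,\,0,\,c-m)$. Strata with $a<m$ again contribute $0$. Summing gives the formula with $\Probvals{=0}{=0}{=c-m}^{(x)}$ and $\Probvals{\geq 1}{=0}{=c-m}^{(x)}$.

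The main obstacle is the vanishing of the strata with $a<\min\{c,m\}$. Here I must check carefully that after reduction, $p$ divides both $B'$ and $C'$ but not $A'$. Then a point with $p\nmid x$ would give $A'x^\ell\equiv 0\pmod p$, which is impossible. A second point to check is that the measure weights $\frac{p-1}{p^a}$ regroup correctly into the $\geq 1$ conditional probability after the shift, in particular that the normalizations match. The argument for $\ProbBCnotpower$ is verbatim the same, since the residue condition on $-C_0/B_0$ is untouched by the rescaling.
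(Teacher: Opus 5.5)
Your proposal is correct and is essentially the paper's argument. The paper only remarks that the lemma is another application of Lemma~\ref{lem:p_divides_two}, and you carry that out in detail: you stratify on $a=v_p(A)$ and reduce with $i=\min\{a,c,m\}$, observing that the $x$-coordinate and the unit parts $B_0,C_0$ are untouched, so both the condition $p\nmid x$ and the $\sigma$-condition transfer. You then discard the strata with $a<\min\{c,m\}$ and regroup the weights $\frac{p-1}{p^a}$ correctly. One small slip: in the vanishing step only $p\mid y$ (not $p\mid z$) is forced in the original coordinates, but the argument in your last paragraph on the reduced equation ($A'$ a unit, $p\mid B',C'$, hence $p\mid x$) is the correct one.
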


\section{Proofs of main theorems}
\label{sec:proofs}

\subsection{Proof of Theorem \ref{thm:rat_funcs}}

We begin with an elementary intermediate result about invertibility of certain matrices. Let $I_r$ denote the $r\times r$ identity matrix.

\begin{lemma}\label{lem:matrix}
    Let $N$ be an $r\times r$ matrix with entries in $[0,1)$ with at most one nonzero entry per row. Then $M = I_{r} - N$ is invertible. 
\end{lemma}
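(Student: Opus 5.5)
The plan is to show that $M = I_r - N$ is strictly diagonally dominant by rows, or equivalently that $\|N\|_\infty < 1$ in the operator norm induced by the sup-norm on $\mathbb{R}^r$; invertibility then follows.

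First I will bound the maximum absolute row sum. Since each row of $N$ has at most one nonzero entry, and that entry lies in $[0,1)$, the absolute row sum of row $s$ is either $0$ or the single entry $N_{s,t(s)} < 1$. Because there are only finitely many rows, the maximum
\[ q \coloneqq \max_{1\le s\le r} \sum_{t} |N_{s,t}| \]
is a maximum of finitely many numbers in $[0,1)$. Hence $q<1$, and so $\|N\|_\infty = q < 1$.

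Next I will conclude invertibility, and there are two equivalent routes. The first is a direct kernel argument. Suppose $Mv = 0$ with $v \neq 0$, and choose an index $s$ with $|v_s| = \max_t |v_t| > 0$. Then $v_s = (Nv)_s = N_{s,t(s)} v_{t(s)}$ (or $0$ if row $s$ vanishes). This gives $|v_s| \le q |v_s| < |v_s|$, a contradiction. The second route uses the Neumann series $\sum_{k\ge0} N^k$, which converges because $\|N^k\|_\infty \le q^k$; its sum is a two-sided inverse of $I_r - N$. I will present the kernel argument, since it is shortest, and mention the Neumann series because it also shows $M^{-1}$ has nonnegative entries. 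That positivity may be useful later when the relations of Lemmas~\ref{lem:probA}--\ref{lem:auxiliary} are solved as a linear system $Mv = b$.

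There is essentially no obstacle here. The only point needing care is that the strict inequality $q<1$ requires finiteness of the matrix, and the hypothesis ``at most one nonzero entry per row'' is exactly what makes the row sum equal a single entry. Without that hypothesis, entries in $[0,1)$ could sum to at least $1$.
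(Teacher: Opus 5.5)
Your proof is correct and is essentially the paper's argument: both use the one-nonzero-entry-per-row hypothesis to get $\|N\|_\infty<1$ for the induced $\ell^\infty$-norm, and then rule out $1$ as an eigenvalue of $N$. Your maximal-coordinate kernel argument is the explicit proof of the spectral bound $|\lambda|\le\|N\|_\infty$ that the paper invokes, and it also spells out the final step ($1$ is not an eigenvalue of $N$, so $I_r-N$ is invertible), which the paper leaves implicit.
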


\begin{proof}
    The hypotheses ensure that the the matrix norm of $N = (n_{ij})$ induced by the $\ell^\infty$-norm is bounded,
    \[||N||_\infty = \max_{1 \leq i \leq r} \sum_{j=1}^r |n_{ij}| < 1.\]
    Since $||\cdot||_\infty$ is a submultiplicative matrix norm, any eigenvalue $\lambda$ of $N$ satisfies $\lambda \leq ||N||_\infty < 1$.
\end{proof}

Theorem \ref{thm:rat_funcs} follows from Lemma \ref{lem:probzero} (see also Remark \ref{rem:probBxz}) together with the following result.

\begin{theorem}\label{thm:rat_funcs_body}
    Fix $\ell,m,n$. There exists a rational function $R_{i,j,k}(t) \in \Q(t)$
    such that for all primes $p$ satisfying \pcond{$p \nmid \glm\gln\gmn$, $\Probzero(p) = \Probvals{=0}{=0}{=0}^{(x)}(p) = \Probvals{=0}{=0}{=0}^{(y)}(p) =\Probvals{=0}{=0}{=0}^{(z)}(p) = 1$,} and $\gcd(p-1,g_{\ell m}) = i$, $\gcd(p-1,g_{\ell n}) = j$, $\gcd(p-1,g_{mn}) = k$, we have
    \[\Prob_{\ell,m,n}(p) = R_{i,j,k}(p).\]    
\end{theorem}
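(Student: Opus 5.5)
The plan is to show that the relations of \S\ref{sec:relations} form a finite, closed linear system. Its unknowns are finitely many auxiliary probabilities, and its coefficients are rational functions of $p$ depending only on $(i,j,k)$. We then check that the system has a unique solution, so that Cramer's rule expresses every unknown, and hence $\Prob_{\ell,m,n}(p)$ via \eqref{eq:rho}, as a rational function of $p$.

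Concretely, we take the unknowns to be $\Prob$, $\ProbA,\ProbB,\ProbC,\ProbAB,\ProbAC,\ProbBC$, together with all auxiliary quantities of the forms $\Probvals{\geq 1}{=0}{=c}^{(x)}$, $\Probvals{\geq 1}{=b}{=0}^{(x)}$, $\ProbvalsBCnotpower{\geq 1}{=0}{=c}^{(x)}$, $\ProbvalsBCnotpower{\geq 1}{=b}{=0}^{(x)}$ with $0\le b<m$ and $0\le c<n$, and their analogues under the permutations of $(x,y,z)$ used in Lemma \ref{lem:probAB}. This is a finite list because the valuations involved are bounded by $\max(\ell,m,n)$.

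The equations come from the earlier results as follows.
\begin{itemize}
\item Equation \eqref{eq:rho} expresses $\Prob$ in terms of the conditional probabilities.
\item Lemma \ref{lem:probA} and its permutations give $\ProbA,\ProbB,\ProbC$. We reorder so that $m\le n$ as that lemma requires.
\item Lemma \ref{lem:probAB} and its permutations give $\ProbAB,\ProbAC,\ProbBC$.
\item Lemma \ref{lem:auxiliary} reduces each $\Probvals{\geq 1}{=0}{=c}^{(x)}$ and each $\ProbvalsBCnotpower{\geq 1}{=0}{=c}^{(x)}$ to terms with some coordinate of valuation zero, or to quantities of the form $\geq 1$, $0$, smaller $c$.
\end{itemize}
Every term in which the valuation condition on $A$ is ``$=0$'' is a known constant. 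Such terms are evaluated by the hypothesis $\Probvals{=0}{=0}{=0}^{(x)}=1$ (and its $y,z$ versions), by Lemma \ref{lem:probAyz} and Remark \ref{rem:probBxz}, which give $1/j$ or $1/k$ and so on, and by Lemma \ref{lem:ProbBCnotpower_easy}, which gives $1$, $0$, $1/j$ or $1/i$. For instance, $\ProbvalsBCnotpower{=0}{=0}{=n-m}^{(x)}$ equals $1$ if $n=m$ and $1/i$ otherwise. After substitution, every equation has the form $u=\sum_v c_{uv}(p)\,v+d_u(p)$, where $c_{uv},d_u\in\Q(p)$ are built from $1/i,1/j,1/k$ and powers of $p$. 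We must check that the hypotheses actually used in these substitutions, namely $p\nmid \glm\gln\gmn$ and the $\Probvals{=0}{=0}{=0}^{(\cdot)}=1$ conditions, are exactly those of the theorem. In particular no dependence on $p$ other than through $(i,j,k)$ should enter.

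Next we write the system as $(I-N(p))\mathbf{u}=\mathbf{d}(p)$. By construction, each row of $N(p)$ has at most one entry coming from each relation, of size $1/p^{c}$, $1/p^m$, $1/p^{2n}$ or $1/p^3$. The main obstacle is that some rows, such as those from \eqref{eq:rho} or Lemma \ref{lem:probAB}, contain several unknowns, so Lemma \ref{lem:matrix} does not apply verbatim.

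My first attempt would be to eliminate in stages. Lemma \ref{lem:auxiliary} and Lemma \ref{lem:probA} involve a single self-referential term per row, with coefficient $1/p^{m}$ or $1/p^{c}<1$. These rows fit Lemma \ref{lem:matrix} exactly, which yields $I-N$ invertible for the auxiliary block over $\Q(p)$, since the determinant is nonzero at the actual prime and hence nonzero as a rational function. Next, Lemma \ref{lem:probAB} has the single self-term $\ProbAB/p^{2n}$ once the auxiliaries are known, so it can be solved. Finally, \eqref{eq:rho} has the single self-term $\Prob/p^3$. Alternatively, I could bound row sums directly: since the coefficients in each relation are probabilities of disjoint events summing to at most $1$, with a strictly positive mass going to constants, the $\ell^\infty$-norm argument of Lemma \ref{lem:matrix} still applies.

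Invertibility of the determinant as an element of $\Q(t)$ then gives a unique solution $R_{i,j,k}(t)$ via Cramer's rule. Specializing at $t=p$ recovers the true probabilities, because these satisfy the system and the specialized determinant is nonzero.
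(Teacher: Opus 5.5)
Your proposal is correct and takes essentially the same route as the paper. The paper first solves the $(m+n-1)$-variable system for the $\ProbvalsBCnotpower{\geq 1}{*}{*}^{(x)}$ coming from Lemmas \ref{lem:probA} and \ref{lem:auxiliary}, which is invertible over $\Q(p)$ by Lemma \ref{lem:matrix} applied at a specialization; it then solves the analogous $\Probvals{\geq 1}{*}{*}^{(x)}$ system, then Lemma \ref{lem:probAB} with its single self-term $\ProbAB/p^{2n}$, and finally \eqref{eq:rho} with self-term $\Prob/p^3$, which is exactly your staged elimination. Your observation that the specialized determinant is nonzero at every admissible $p$ makes explicit a point the paper leaves implicit: why the $\Q(p)$-solution specializes correctly.
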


\begin{proof}
    Fix a choice of $(i,j,k)$ and let $p$ be an arbitrary prime satisfying the hypotheses. We argue first that the $\ProbvalsBCnotpower{\geq 1}{=0}{=c}^{(x)}$ for $0 \leq c < n$ and $\ProbvalsBCnotpower{\geq 1}{=b}{=0}^{(x)}$ for $0 \leq b < m$ can be described by rational functions \textit{independent of $p$} (but depending on $(i,j,k)$). By Lemma \ref{lem:probA}, this will allow us to compute $\ProbA$, also as a rational function independent of $p$. Repeating the same process produces rational function expressions for $\ProbB$ and $\ProbC$. A similar argument for the $\Probvals{\geq 1}{=0}{=c}^{(x)}$ and $\Probvals{\geq 1}{=b}{=0}^{(x)}$ allows us to compute $\ProbAB$ (repeating this gives $\ProbBC$ and $\ProbAC$). Once all probabilities appearing in \eqref{eq:rho}, save for $\Prob$ itself, are described in terms of rational functions in $p$ that depend only on $(i,j,k)$, the result follows.

    By Lemmas \ref{lem:probA} and Lemma \ref{lem:auxiliary}, we have a system of $m+n-1$ linear equations in the $m+n-1$ variables $\ProbvalsBCnotpower{\geq 1}{=0}{=0}^{(x)}$, $\ProbvalsBCnotpower{\geq 1}{=0}{=c}^{(x)}$ for $0 < c < n$, and $\ProbvalsBCnotpower{\geq 1}{=b}{=0}^{(x)}$ for $0 < b < m$. Write this system as 
    \begin{equation}\label{eq:linearsystem}
        M\bv = \bu_{i,j,k},
    \end{equation}
    where the entries of $\bv$ correspond to the variables. Observe that the entries of $M$ live in $\Q(p)$: they depend only on the fixed values $\ell,m,n$ and not on $i,j,k$. Let us now examine the entries of $\bu_{i,j,k}$ more closely. By Lemma \ref{lem:probA} we have the following equation describing $\ProbvalsBCnotpower{\geq 1}{=0}{=0}^{(x)}$ (in the case $m \leq n$): 
    \[\ProbvalsBCnotpower{\geq 1}{=0}{=0}^{(x)} - \frac1{p^m} \ProbvalsBCnotpower{\geq 1}{=0}{=n-m}^{(x)} = \frac{p-1}{p^m} \ProbvalsBCnotpower{=0}{=0}{=n-m}^{(x)},\]
    so the corresponding entry in $\bu_{i,j,k}$ is 
    \[\frac{p-1}{p^m} \ProbvalsBCnotpower{=0}{=0}{=n-m}^{(x)} = \begin{cases}
        1 & m=n,\\
        1/i & n > m,
    \end{cases}\]
    by Lemma \ref{lem:ProbBCnotpower_easy}. 
    A similar analysis reveals that the constant terms in the relations coming from Lemma \ref{lem:auxiliary} are also rational functions in $p$ depending only on $i$ or $j$.

    For any fixed prime $p_0$, Lemma \ref{lem:matrix} ensures that the specialization $M(p_0)$ is invertible, thus $M$ must be invertible over $\Q(p)$. Therefore for any $i,j,k$, we can solve \eqref{eq:linearsystem} in $\Q(p)^{m+n-1}$, giving rational functions which specialize to $\ProbvalsBCnotpower{\geq 1}{=0}{=0}^{(x)}(p)$, $\ProbvalsBCnotpower{\geq 1}{=0}{=c}^{(x)}(p)$ for $0 < c < n$, and $\ProbvalsBCnotpower{\geq 1}{=b}{=0}^{(x)}(p)$ for $0 < b < m$, for all primes $p$ satisfying the hypotheses in the theorem statement. By Lemma \ref{lem:probA}, we can give similar descriptions of $\ProbA$ and $\Probvals{\geq 1}{=0}{=0}^{(x)}$. Repeating this process, we solve for $\ProbB$, $\Probvals{=0}{\geq 1}{=0}^{(y)}$, $\ProbC$, and $\Probvals{=0}{=0}{\geq 1}^{(z)}$, again as rational functions in $\Q(p)$ (depending only on $i,j,k$).
    
    Repeating this approach for the $\Probvals{\geq 1}{=0}{=c}^{(x)}$ for $0 < c < n$ and $\Probvals{\geq 1}{=b}{=0}^{(x)}$ for $0 < b < m$, we solve for these auxiliary probabilities as rational functions in $\Q(p)$ depending only on $i,j,k$. The same can be done to determine the relevant $\Probvals{=a}{\geq 1}{=c}^{(y)}$ and $\Probvals{=a}{=b}{\geq 1}^{(z)}$. Thus we obtain all the necessary values, as rational functions in $p$ depending only on $(i,j,k)$, to solve for $\ProbAB,\ProbAC,\ProbBC$ by Lemma \ref{lem:probAB}. Finally, evaluating \eqref{eq:rho} gives $\Prob$, and hence the expression for the rational functions $R_{i,j,k} \in \Q(p)$ in the statement of the theorem, completing the proof.
\end{proof}

The proof of Theorem \ref{thm:rat_funcs_body} also constitutes an effective algorithm to compute the rational functions $R_{i,j,k}(p)$ (for all but finitely many primes $p$). Sage code implementing this algorithm is available on GitHub \cite{github_gfedensity}. We compute a number of examples in \S \ref{sec:examples}.

\subsection{Proof of Theorem \ref{thm:stats}}

We now turn to analyzing the asymptotic behavior of $N_{\ell,m,n}^{\mathrm{loc}}(T)$. 

\begin{lemma}\label{lem:boundary}
    Let $\Omega_p = \left\{ (A,B,C) \in \Z_p^3 : \X_{A,B,C}(\Z_p) \neq \emptyset \right\}$. Then $\mu_p(\Omega_p) > 0$ and $\mu_p(\partial \Omega_p) = 0$.
\end{lemma}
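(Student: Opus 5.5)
To get $\mu_p(\Omega_p) > 0$, I will exhibit an explicit open set of coefficients lying in $\Omega_p$. Take the set $U$ of $(A,B,C) \in \Z_p^3$ with $A \equiv 1$, $B \equiv -1 \pmod{p^{N}}$ and $C \in \Z_p$ arbitrary, for a suitable $N$. For every such triple the point $(x,y,z) = (1,1,0)$ satisfies $Ax^\ell + By^m \equiv 0 \pmod{p^N}$, and I want to lift it to an honest primitive solution. The lifting should be done in the $x$-coordinate while keeping $y=1$ and $z=0$. We need $x$ with $x^\ell = -B/A$, where $-B/A \equiv 1 \pmod{p^N}$. By Hensel's lemma (or the convergence of the binomial series), every element of $1 + p^{N}\Z_p$ is an $\ell$-th power once $N > 2v_p(\ell)$; for $p=2$ one takes $N > 2v_2(\ell)+1$ or so. Hence $[x:1:0] \in \X_{A,B,C}(\Z_p)$, so $U \subseteq \Omega_p$ and $\mu_p(\Omega_p) \ge \mu_p(U) = p^{-2N} > 0$. (One could also use Lemma~\ref{lem:intermediate}-type constructions, but this direct argument avoids any coprimality hypotheses.)

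For $\mu_p(\partial\Omega_p)=0$, my plan is to show that $\Omega_p$ is open in $\Z_p^3$ away from the closed null set $Z = \{ABC = 0\}$, and that its complement is likewise open away from $Z$. Then $\partial \Omega_p \subseteq Z$, and $Z$ is a finite union of coordinate planes, each of Haar measure zero.

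Openness of $\Omega_p \setminus Z$ follows from Hensel's lemma. Suppose $ABC \neq 0$ and $(x,y,z)$ is a primitive solution. Then some coordinate is a unit, say WLOG a coordinate whose partial derivative $\ell A x^{\ell-1}$ (or its analogue) is nonzero. I expect the main obstacle to be that the derivative at the given point might vanish, for instance when $x=0$. To get around this, I will first perturb the solution so that all of $x,y,z$ are nonzero. The points with $xyz \ne 0$ are dense in the $p$-adic points of the smooth locus of the surface $S_{A,B,C}$, since $S_{A,B,C}$ is smooth away from the origin in characteristic zero. Once $xyz \neq 0$, we have $\partial_x F = \ell A x^{\ell-1} \neq 0$, with some finite valuation $e$. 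For $(A',B',C')$ congruent to $(A,B,C)$ modulo a sufficiently high power of $p$, the value $F'(x,y,z)$ has valuation $> 2e$, and Hensel's lemma produces $x'$ close to $x$ with $F'(x',y,z)=0$. This new solution is still primitive, because primitivity is an open condition.

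Openness of the complement away from $Z$ is the more delicate step. Fix $(A,B,C)$ with $ABC \neq 0$. Scaling by a power of $p$ does not change solubility, and a small perturbation fixes the valuations $v_p(A)$, $v_p(B)$, $v_p(C)$. So it suffices to show that solubility depends only on $(A,B,C)$ modulo $p^{M}$ for some $M$ depending on these valuations. I will argue this as follows. A primitive solution forces, by valuation considerations, bounded valuations on the unit-normalized terms. By the descent steps of Lemmas~\ref{lem:p_divides_one} and~\ref{lem:p_divides_two}, the problem reduces after finitely many steps to asking whether $\X_{A',B',C'}(\Z_p; p\nmid \cdot)$ is nonempty for related coefficients. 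That question is a Hensel-type condition determined modulo $p^{2v_p(\ell m n)+1}$ times bounded powers. Consequently the soluble set is locally constant on $\Z_p^3 \setminus Z$, which gives both openness statements and hence $\mu_p(\partial\Omega_p) \le \mu_p(Z) = 0$.
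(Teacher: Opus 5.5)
Your positivity argument is correct and is essentially the paper's. Both exhibit an explicit box of coefficients on which a primitive solution lifts by Hensel's lemma; you lift $[x:1:0]$ in the $x$-coordinate, while the paper lifts $[0:y:z]$.

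For the boundary you take a genuinely different route. The paper shows $\Omega_p$ is semialgebraic, as the projection of a semialgebraic subset of $\Z_p^6$ (quantifier elimination, \cite{Ponomarev}). It then quotes the fact that semialgebraic sets have boundary of measure zero, following \cite[Lemma 3.9]{BBL}. You instead show that membership in $\Omega_p$ is locally constant off the null set $\{ABC=0\}$, so $\partial\Omega_p\subseteq\{ABC=0\}$. Your route is more elementary and gives more information, namely an explicit modulus of local constancy. The paper's argument is a black box, but it would apply unchanged to any family cut out by polynomial conditions, including ones whose degeneracy locus is harder to describe.

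However, your treatment of the step you call delicate is only a sketch. The detour through Lemmas \ref{lem:p_divides_one} and \ref{lem:p_divides_two} is unnecessary, and it is not quite available: part (i) of Lemma \ref{lem:p_divides_one} assumes $p\nmid\gmn$, whereas this lemma must hold for every $p$. No descent is needed. Suppose $ABC\neq 0$, put $M=1+\max\{2v_p(\ell A),\,2v_p(mB),\,2v_p(nC)\}$, and let $(A',B',C')\equiv(A,B,C)\pmod{p^M}$. Take a primitive solution of the original equation, say with $p\nmid x$. The polynomial $X\mapsto A'X^\ell+B'y^m+C'z^n$ has value divisible by $p^M$ at $X=x$, and its derivative there has valuation exactly $v_p(\ell A)$. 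Since $M>2v_p(\ell A)$, Hensel's lemma yields a root $x'\equiv x\pmod p$, giving a primitive solution for $(A',B',C')$. The cases $p\nmid y$ and $p\nmid z$ are identical, and the argument is symmetric in the two triples.

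This single argument gives both of your openness claims, so your density/perturbation step is also superfluous. In any event, at a primitive solution with $x=0$ the partial derivative $mBy^{m-1}$ is already nonzero, so no perturbation is needed there. Alternatively, $\Omega_p$ is closed in all of $\Z_p^3$ because it is the projection of a compact set. With that, only openness off $\{ABC=0\}$ needs proof.
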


\begin{proof}
    Consider $\Spec \Q_p[A,B,C,x,y,z] \simeq \mathbb{A}^6_{\Q_p}$ and the affine variety $V \subset \mathbb{A}^6_{\Q_p}$ given by
    \[V \colon Ax^\ell + By^m + Cz^n = 0.\]
    There is a projection morphism $\mathrm{pr} \colon V \to \Spec \Q_p[A,B,C] \simeq \mathbb{A}_{\Q_p}^3$.

    The set
    \[\{(A,B,C,x,y,z) \in \Z_p^6 : p \nmid x\}\]
    is a semialgebraic subset of $\Q_p^6$, and similarly for $p \nmid y$ and $p \nmid z$. This follows from the fact that $\Z_p \subset \Q_p$ is semialgebraic (see \cite[p.\ 185]{Ponomarev}) as are residue disks and their complements.
    
    In particular, 
    \[W = V(\Q_p) \cap \left\{ (A,B,C,x,y,z) \in \Z_p^6 : (x,y,z) \in \Z_p^3 \setminus (p\Z_p)^3 \right\}\]
    is semialgebraic, with $\mathrm{pr}(W) = \Omega_p$. At this point we essentially follow the proof of \cite[Lemma 3.9]{BBL}: by \cite[Theorem 1']{Ponomarev}, $\mathrm{pr}(W)$ is semialgebraic, and therefore $\Omega_p$ is measurable with $\mu_p(\partial \Omega_p) = 0$.

    To see that $\mu_p(\Omega_p) > 0$ for all $p$, it suffices to modify the proofs of Lemmas \ref{lem:p_divides_one}(i) and \ref{lem:probAyz} slightly to allow for $p \mid n$. Let $k > 2v_p(n)$ be an integer. Then for all $B,C \in \Z_p^\times$ such that $B \equiv -C \pmod{p^k}$, we have $-C/B \in (\Z_p/p^k\Z_p)^{\glm}$, so there exists a solution $(y,z) \in (\Z_p/ p^k\Z_p)^\times$ to $By^m + Cz^n \equiv 0 \pmod{p^k}$. Since $k > 2v_p(n)$, we can lift via Hensel's lemma in the $z$-coordinate to $(0,y,z) \in \Z_p^3 \setminus (p\Z_p)^3$. This immediately gives a lower bound of $\mu_p(\Omega_p) \geq \frac{p-1}{p^{k+1}}$.
\end{proof}

\begin{proof}[Proof of Theorem \ref{thm:stats}]
    For the first statement, suppose \expcond{$\ell,m,n$ are pairwise coprime}. In particular, at least one is odd, so \eqref{eq:gfe} always has real solutions. We aim to apply \cite[Proposition 3.4]{BBL} to see that the density of $[A:B:C] \in \P^2(\Z)$ for which $\X_{A,B,C}$ is everywhere locally soluble is computed by the product of $\Prob_{\ell,m,n}(p)$.

    We claim 
    \begin{equation}\label{eq:tech_cond}
        \lim_{M \to \infty} \limsup_{T \to \infty} \frac{\#\{(A,B,C) \in \Z^3 \cap [-T,T]^3 : \exists\ p > M \text{ s.t. } (A,B,C) \notin \Omega_p \}}{T^3} = 0.
    \end{equation}
    For each $M > 0$, the quantity inside the limit superior can be bounded above by
    \begin{equation}\label{eq:tail}
        \frac1{T^3} \sum_{p > M} (1 - \Prob(p)).
    \end{equation}
    Since we are taking a limit in $M$, we may assume $p$ is sufficiently large, so the hypotheses of Theorem \ref{thm:rat_funcs_body} are satisfied and $\Prob(p)$ is given uniformly by a \textit{single} rational function. Moreover, Lemma \ref{lem:probA} shows that $\ProbA = \ProbB = \ProbC = 1$ when $\ell,m,n$ are pairwise coprime, so $1 - \Prob(p) = O(1/p^2)$. Thus \eqref{eq:tail} is the tail of a convergent series, so \eqref{eq:tech_cond} goes to zero as claimed.

    Now we apply \cite[Proposition 3.4]{BBL}. Lemma \ref{lem:boundary} ensures the required boundary conditions on $\Omega_p$ are satisfied. For any bounded $\Psi \subset \Q^3$ with positive measure and boundary of measure zero, \cite[Equation 3.5]{BBL} follows from \eqref{eq:tech_cond}. Explicitly, if $\Psi \subseteq [-\lambda,\lambda]^3$, then we have
    \begin{align*}
         \lim_{M \to \infty} &\limsup_{T \to \infty} \frac{\#\{(A,B,C) \in \Z^3 \cap T\Psi : \exists\ p > M \text{ s.t. } (A,B,C) \notin \Omega_p \}}{T^3} \\
        & \leq \lim_{M \to \infty} \limsup_{T \to \infty} \frac{\#\{(A,B,C) \in \Z^3 \cap [\lambda T, \lambda T]^3 : \exists\ p > M \text{ s.t. } (A,B,C) \notin \Omega_p \}}{T^3} \\
        & = \lim_{M \to \infty} \lambda^3 \limsup_{T \to \infty} \frac{\#\{(A,B,C) \in \Z^3 \cap [\lambda T, \lambda T]^3 : \exists\ p > M \text{ s.t. } (A,B,C) \notin \Omega_p \}}{(\lambda T)^3} = 0
    \end{align*}
    by \eqref{eq:tech_cond}. Thus the quantity in the statement of Theorem \ref{thm:stats} is equal to
    \[\lim_{T \to \infty} \frac{\#\left\{P \in \P^2(\Q) : \Ht(P) \leq T,\ P \in \Omega_p \text{ for all } p\right\}}{\#\left\{P \in \P^2(\Q) : \Ht(P) \leq T\right\}} = \prod_p \mu_p(\Omega_p) = \prod_p\Prob(p).\]
    Note that it was essential that $1- \Prob(p) = O(1/p^2)$ for all $p$. This is not the case when \expcond{$\ell,m,n$ are not pairwise coprime}. 

    Suppose now that \expcond{$\ell,m,n$ are not pairwise coprime}. Without loss of generality, \expcond{suppose $\gmn > 1$.} Fix $M > 0$ and define
    \[\Omega_p' = \begin{cases}
        \Omega_p & p \equiv 1 \pmod{\gmn} \text{ and } p < M,\\
        \Z_p^3 & \text{otherwise}.
    \end{cases}\]
    Another application of \cite[Proposition 3.4]{BBL} shows\footnote{This time \cite[Equation 3.5]{BBL} holds trivially since $\Omega_p' = \Z_p^3$ for all but finitely many primes $p$.}
    \[\lim_{T \to \infty} \frac{\#\left\{P \in \P^2(\Q) : \Ht(P) \leq T,\ P \in \Omega_p' \text{ for all } p\right\}}{\#\left\{P \in \P^2(\Q) : \Ht(P) \leq T\right\}} = \prod_p \mu_p(\Omega_p') = \prod_{\substack{p < M \\ p \equiv 1 \ (\gmn)}} \Prob(p),\]
    which is an upper bound for the desired limit superior.
    
    Among sufficiently large primes $p \equiv 1 \pmod{\gmn}$, we can give a uniform upper bound 
    \[\Prob(p) \leq 1 - \frac{\gmn - 1}{\gmn p} + O(1/p^2)\]
    using Lemma \ref{lem:probA} and \eqref{eq:rho}.  By an application of Mertens' product theorem for primes in arithmetic progressions \cite{Williams}, we have
    \begin{equation}
    \label{eq:mertens_step}
        \prod_{\substack{p < M \\ p \equiv 1 \ (\gmn)}} \Prob(p) \ll \left(\prod_{\substack{p < M \\ p \equiv 1 \ (\gmn)}} \left( 1 - \frac1p\right)\right)^{(\gmn - 1)/\gmn} \ll \left(\frac1{\log M}\right)^{\frac{\gmn - 1}{\gmn \varphi(\gmn)}}
    \end{equation}
    where $\varphi$ denotes Euler's totient function. This approaches zero as $M \to \infty$, finishing the proof. 
\end{proof}

\subsection{Number fields}
\label{subsec:NFs}

Our main results, Theorems \ref{thm:rat_funcs} and \ref{thm:stats}, may be extended to number fields as follows. Let $K/\Q$ be a number field and $\O_K$ its ring of integers. For a prime ideal $\p \subset \O_K$, we let $K_\p$ denote the completion, with ring of integers $\Op$ and residue field $\F_\p = \Op/\p$. The norm of $\p$ is $N_K(\p) = \# \F_\p$.

For $\ell,m,n \geq 1$ and $A,B,C \in \O_K$, primitive $\Op$-solutions to \eqref{eq:gfe} correspond to $\Op$-points\footnote{Caution: the same is not quite true for $\O_K$-points if the class group is nontrivial.} on the stacky curve $\X_{A,B,C}$, defined as in \S \ref{subsec:GFEs}. Since detecting $\Op$-points on $\X_{A,B,C}$ boils down to studying valuations of the coefficients and lifting solutions over $\F_\p$, the intermediate results of \S \ref{subsec:padic_pts} carry over to this setting unchanged.

Defining the analogous local solubility density,
\[\ProbK_{\ell,m,n}(\p) = \mu_\p \left( \left\{ (A,B,C) \in \Op^3 : \X_{A,B,C}(\Op) \neq \emptyset \right\}\right),\]
the techniques of \S \ref{sec:probs} and \S \ref{sec:relations} can be used to define auxiliary probabilities and find relations, with $N_K(\p)$ playing the role of (the size of) $p$. Moreover, since e.g.\ $\gcd(\Np - 1, \glm)$ is controlled by how $\p$ splits in a cyclotomic extension of $K$, we can more cleanly describe the resulting rational functions as depending on the Frobenius of $\p$ in the extension. 

Let $\zeta$ be a primitive $\lcm(\glm,\gln,\gmn)$-th root of unity. The extension $K(\zeta)/K$ is Galois and cyclic, so for $\p$ unramified (i.e.\ with $\Np$ coprime to $\glm\gln\gmn$) the Frobenius $\Frobp$ is an element of the Galois group $\mathrm{Gal}(K(\zeta)/K)$. Moreover, $\Frobp$ determines the value of $\Np$ modulo $\lcm(\glm,\gln,\gmn)$, hence also those of $\gcd(\Np - 1, \glm)$, $\gcd(\Np - 1, \gln)$, and $\gcd(\Np - 1, \gmn)$. Thus, following the proof of Theorem \ref{thm:rat_funcs_body}, we may extend Theorem \ref{thm:rat_funcs} to the number field setting.

\begin{theorem}\label{thm:rat_funcs_NFs}
	With the notation above, for each $\tau \in \mathrm{Gal}(K(\zeta)/K)$ there exists a rational function $R_\tau(t) \in \Q(t)$ such that for all but finitely many primes $\p \subset \O_K$, we have $\Prob^{(K)}_{\ell,m,n}(\p) = R_{\Frobp}(\Np)$.
\end{theorem}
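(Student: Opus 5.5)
The proof follows that of Theorem~\ref{thm:rat_funcs_body}, with $q = \Np$ in place of $p$ and the Frobenius playing the role of the triple $(i,j,k)$. First we attach a triple to each $\tau \in \mathrm{Gal}(K(\zeta)/K)$. This Galois group embeds in $(\Z/L\Z)^\times$ with $L = \lcm(\glm,\gln,\gmn)$, via $\tau(\zeta) = \zeta^{a_\tau}$. Set
\[(i_\tau, j_\tau, k_\tau) = \bigl(\gcd(a_\tau - 1, \glm),\ \gcd(a_\tau-1,\gln),\ \gcd(a_\tau-1,\gmn)\bigr).\]
For $\p$ unramified in $K(\zeta)$, we have $\Frobp(\zeta) \equiv \zeta^{\Np} \pmod{\p}$, so $\Np \equiv a_{\Frobp} \pmod L$. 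Hence $\gcd(\Np-1,\glm) = i_{\Frobp}$, and likewise for the other two gcds. Define $R_\tau := R_{i_\tau,j_\tau,k_\tau}$, where the latter is the rational function produced by the argument of Theorem~\ref{thm:rat_funcs_body}. It remains to check that this argument works verbatim over $\Op$ with $q$ in place of $p$.

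Next we redo \S\ref{subsec:padic_pts}--\S\ref{sec:relations} over $\Op$, fixing a uniformizer $\pi$ in place of $p$. The key inputs transfer as follows.
\begin{enumerate}[label = (\alph*)]
\item Lemmas \ref{lem:intermediate}, \ref{lem:p_divides_one} and \ref{lem:p_divides_two} involve only valuations, rescaling by powers of $\pi$, and Hensel's lemma for residue-field solutions when $\p \nmid \glm\gln\gmn$.
\item In Lemma~\ref{lem:p_divides_none}, the Hasse--Weil bound over $\F_\p$ gives \eqref{eq:pt_count_hyp} with $p$ replaced by $q$. This holds for all $q$ large, hence for all but finitely many $\p$, so $\Probzero = \Probvals{=0}{=0}{=0}^{(x)} = \Probvals{=0}{=0}{=0}^{(y)} = \Probvals{=0}{=0}{=0}^{(z)} = 1$ there.
\item In Lemma~\ref{lem:probAyz}, the density becomes $\#(\F_\p^\times)^{\gmn}/\#\F_\p^\times = 1/\gcd(q-1,\gmn)$.
\item The Haar measure of $\{v_\p(A) = a\}$ is $(q-1)/q^{a+1}$, so every weight $(p-1)/p^a$, $1/p^a$, $(p-1)^3/p^3$, etc.\ in \eqref{eq:rho} and Lemmas \ref{lem:probA}, \ref{lem:probAB}, \ref{lem:auxiliary} is replaced by the same expression in $q$.
\end{enumerate}

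With these substitutions, the linear system \eqref{eq:linearsystem} becomes $M(q)\bv = \bu_{i,j,k}(q)$, where $M$ and $\bu$ are literally the same elements of $\Q(t)$ evaluated at $t=q$. This is the one point needing care: the coefficient matrix and constant terms must depend only on $q$ and the gcds, with no hidden dependence on $p$ or on the degree $[\F_\p:\F_p]$. That holds by construction, since every coefficient arises from measures of valuation shells, and every constant term from Lemma~\ref{lem:ProbBCnotpower_easy} or Lemma~\ref{lem:probAyz}.

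Lemma~\ref{lem:matrix} applies to $M(q)$ for any prime power $q$, since its entries lie in $[0,1)$ with at most one nonzero entry per row. Hence $M$ is invertible over $\Q(t)$, and $\det M(q) \neq 0$ at every specialization used. Solving the system, then feeding the results into Lemma~\ref{lem:probA}, Lemma~\ref{lem:probAB} and \eqref{eq:rho} (solving for $\Prob$ via the factor $1 - 1/q^3 \neq 0$), gives $\Prob^{(K)}_{\ell,m,n}(\p) = R_{i,j,k}(q)$ for the correct triple.

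The excluded set consists of the $\p$ dividing $\glm\gln\gmn$ (which contains the ramified primes) together with the $\p$ of small norm failing \eqref{eq:pt_count_hyp}. Since only finitely many $\p$ have bounded norm, this set is finite. For all other $\p$ we get $\Prob^{(K)}_{\ell,m,n}(\p) = R_{\Frobp}(\Np)$, as required.
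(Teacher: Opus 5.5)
Your proposal is correct and follows the paper's approach. The paper likewise notes that $\Frobp$ determines $\Np$ modulo $\lcm(\glm,\gln,\gmn)$, hence the triple of gcds, and then reruns the proof of Theorem~\ref{thm:rat_funcs_body} with $\Np$ in place of $p$, asserting that the lemmas of \S\ref{subsec:padic_pts} and the relations of \S\ref{sec:probs}--\S\ref{sec:relations} carry over unchanged. Your write-up spells out the details the paper leaves implicit (the valuation-shell weights in $q$, the specialization argument via Lemma~\ref{lem:matrix}, and the finiteness of the excluded set), with one small slip: it is $I - M(q)$, not $M(q)$, whose entries lie in $[0,1)$.
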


For the counting results, we define $N^{\mathrm{loc}}_{\ell,m,n}(K;T)$ to count $P = [A:B:C] \in \P^2(K)$ with bounded height (see e.g.\ \cite[\S 3.2]{BBL} for the definition of height in this setting) for which $\X_{A,B,C}(\Op) \neq \emptyset$ for all $\p$. The methods of \cite{BBL} work over number fields and Lemma \ref{lem:boundary} generalizes readily to this setting. Modifying the proof of Theorem~\ref{thm:stats} appropriately, we obtain the following results.

\begin{theorem}\label{thm:stats_NFs}
    For a number field $K$, the asymptotic growth of $N^{\mathrm{loc}}_{\ell,m,n}(K;T)$ can be described as follows.
    \begin{enumerate}[label = (\roman*)]
        \item If \expcond{$\ell,m,n$ are pairwise coprime},
        \[\frac{N^{\mathrm{loc}}_{\ell,m,n}(K;T)}{\#\left\{P \in \P^2(K) : \Ht(P) \leq T\right\}} \sim \prod_{\p} \ProbK_{\ell,m,n}(\p) > 0.\]
        \item If instead \expcond{$\ell,m,n$ are not pairwise corpime},
        \[\limsup_{T \to \infty} \frac{N^{\mathrm{loc}}_{\ell,m,n}(K;T)}{\#\left\{P \in \P^2(K) : \Ht(P) \leq T\right\}} = 0.\]
    \end{enumerate}
\end{theorem}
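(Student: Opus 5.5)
The plan is to run the proof of Theorem \ref{thm:stats} over $K$, checking that each ingredient transfers. The two parts are handled separately; in both, the key input is the asymptotic shape of $\ProbK_{\ell,m,n}(\p)$ as $\Np \to \infty$, which comes from Theorem \ref{thm:rat_funcs_NFs}.

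\emph{Part (i).} Since $\ell,m,n$ are pairwise coprime, one of them is odd. Hence \eqref{eq:gfe} has a nontrivial solution in every real completion. At complex places there is no condition. The first step is to apply the number field version of \cite[Proposition 3.4]{BBL} with $\Omega_\p = \{(A,B,C)\in\Op^3 : \X_{A,B,C}(\Op)\neq\emptyset\}$.
\begin{itemize}
\item The boundary condition $\mu_\p(\partial\Omega_\p)=0$ and positivity $\mu_\p(\Omega_\p)>0$ come from the number field analogue of Lemma \ref{lem:boundary}. The same argument works, since $\Op$ is semialgebraic in $K_\p$ and Ponomarev's quantifier elimination holds for finite extensions of $\Q_p$.
\item For the tail condition \eqref{eq:tech_cond}, I would bound the proportion of bad $(A,B,C)$ in a box by $\sum_{\Np>M}(1-\ProbK(\p))$. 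For $\Np$ large, Lemma \ref{lem:p_divides_none} over $\F_\p$ (Hasse--Weil with $\Np$ in place of $p$) gives $\Probzero=1$.
\item By the $K$-analogue of Lemma \ref{lem:probA}, with $\gmn=\gln=\glm=1$, we get $\ProbA=\ProbB=\ProbC=1$. Then \eqref{eq:rho} with $\Np$ in place of $p$ gives $1-\ProbK(\p)=O(\Np^{-2})$.
\item Since $\sum_\p \Np^{-2}$ converges (Dedekind zeta at $2$), the tail tends to $0$.
\end{itemize}
The density is then $\prod_\p\ProbK(\p)$, which is positive because each factor is positive and the product converges.

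\emph{Part (ii).} Suppose without loss of generality that $\gmn>1$. Set $\Omega'_\p=\Omega_\p$ for $\Np<M$ with $\Np\equiv1 \pmod{\gmn}$, and $\Omega'_\p=\Op^3$ otherwise. Another application of \cite[Proposition 3.4]{BBL} bounds the limsup by $\prod_{\Np<M,\ \Np\equiv 1\,(\gmn)}\ProbK(\p)$.

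For such $\p$ with $\Np$ large, $\gcd(\Np-1,\gmn)=\gmn$. Lemma \ref{lem:probA} then gives
\[\ProbA\le \frac1{\gmn}+O(\Np^{-1}).\]
Feeding this into \eqref{eq:rho} yields
\[\ProbK(\p)\le 1-\frac{\gmn-1}{\gmn\,\Np}+O(\Np^{-2}).\]

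The main obstacle is replacing Mertens' theorem in arithmetic progressions. The condition $\Np\equiv1\pmod{\gmn}$ holds exactly when $\p$ splits completely in $K(\zeta_{\gmn})/K$ (up to finitely many ramified $\p$). I would use the Chebotarev density theorem in its Mertens form, which follows from analytic properties of Hecke $L$-functions (equivalently, a Mertens theorem for $\zeta_{K(\zeta_{\gmn})}$). This gives
\[\sum_{\Np<M,\ \p \text{ split}}\Np^{-1}=\frac{1}{[K(\zeta_{\gmn}):K]}\log\log M+O(1).\]
Consequently the product is
\[\ll(\log M)^{-\frac{\gmn-1}{\gmn[K(\zeta_{\gmn}):K]}},\]
which tends to $0$ as $M\to\infty$. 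Degree-one primes dominate and higher-degree primes contribute $O(1)$ to the sum, so there is no issue from them.
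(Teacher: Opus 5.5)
Your proposal is correct and follows essentially the same route as the paper. The paper also runs the proof of Theorem~\ref{thm:stats} over $K$ using the number-field versions of \cite[Proposition 3.4]{BBL} and Lemma~\ref{lem:boundary}, and replaces Williams' Mertens theorem for arithmetic progressions by a Chebotarev--Mertens estimate for primes with trivial Frobenius. The only differences are cosmetic: the paper cites \cite[Theorem A]{APKK_Mertens} for $K(\zeta)$ with $\zeta$ of order $\lcm(\glm,\gln,\gmn)$, while you derive the estimate from the prime ideal theorem for $K(\zeta_{\gmn})$, which works equally well.
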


Note that for the last step in the proof of Theorem \ref{thm:stats_NFs}(ii), the invocation of \cite{Williams} for $p \equiv 1 \pmod \gmn$ to achieve \eqref{eq:mertens_step} needs to be replaced by an application of \cite[Theorem A]{APKK_Mertens} for prime ideals $\p$ with $\Frobp$ equal to the identity element of $\mathrm{Gal}(K(\zeta)/K)$.

\begin{remark}\label{rem:prodNloc_bigsmall}
    As $K$ varies, the density of $P \in \P^2(K)$ for which $\X_P$ is everywhere locally soluble can be arbitrarily close to 0 or 1. See the discussion in \S \ref{subsubsec:numericsNFs}, in particular Proposition \ref{prop:liminf_limsup_ProbK}.
\end{remark}

\section{Explicit examples}
\label{sec:examples}

Sage code demonstrating the examples worked out in this section is available at \cite{github_gfedensity}.

\subsection{Diagonal plane curves}
\label{subsec:nnn}

When $\ell=m=n$, \eqref{eq:gfe} describes a diagonal plane curve of degree $n$, as studied in \cite{HirakawaKanamura}. In particular, $\X_{A,B,C}(\Z_p) = X_{A,B,C}(\Z_{p}) = X_{A,B,C}(\Q_p)$. For $n \in \{2,3,4,5\}$, Theorem \ref{thm:rat_funcs} produces the following density functions: 
\begin{align}
    \label{eq:222}\Prob_{2,2,2}(p) &= 
        1 - \frac{3p}{2(p+1)^2}
        &\pcond{(p \neq 2)}\\
    \label{eq:333}\Prob_{3,3,3}(p)    &= 
    \begin{cases}
        1 - \frac{2p}{p^2 + p + 1},
        & p \equiv 1 \pmod{3} \\ 
        1 - \frac{6p^3}{{\left(p^{2} + p + 1\right)}^{3}},
        & p \equiv 2 \pmod{3}
    \end{cases}
    & \pcond{(p \neq 3)}\\
    \label{eq:444}\Prob_{4,4,4}(p) &= \begin{cases}
        1 - \frac{3  {\left(3  p^{4} + 3  p^{3} + 5  p^{2} + 3  p + 3\right)} p}{4  {\left(p^{2} + 1\right)}^{2} {\left(p + 1\right)}^{2}},
        & p \equiv 1 \pmod{4}\\
        1 - \frac{3  {\left(p^{4} + p^{3} + 3  p^{2} + p + 1\right)} p}{2  {\left(p^{2} + 1\right)}^{2} {\left(p + 1\right)}^{2}}, 
        & p \equiv 3 \pmod{4}
    \end{cases}
    & \pcond{(p > 31)}\\
    \label{eq:555}\Prob_{5,5,5}(p) &= \begin{cases}
        1 - \frac{6  {\left(2  p^{4} + 2  p^{3} + p^{2} + 2  p + 2\right)} {\left(p^{2} + 1\right)} p}{5  {\left(p^{4} + p^{3} + p^{2} + p + 1\right)}^{2}},
        & p \equiv 1 \pmod{5} \\
        1 - \frac{6  {\left(p^{2} + 1\right)} p^{3}}{{\left(p^{4} + p^{3} + p^{2} + p + 1\right)}^{2}},
        & p \not\equiv 0,1 \pmod{5}
    \end{cases}
    & \pcond{(p > 131)}
\end{align}
The densities \eqref{eq:222} and \eqref{eq:333} recover those presented in \cite[Theorems 1.3(1) and 1.4(1)]{HirakawaKanamura}. Their methods generalize to diagonal hypersurfaces in $\P^n$. Another description of local solubility in this case is given in \cite[Proposition 2.7]{KPSS_locsolfermat} in terms of the valuations of the coefficients modulo $n$. Writing $A = p^aA_0$, $B = p^bB_0$, $C = p^cC_0$ for $A_0,B_0,C_0 \in \Z_p^\times$ and $a \equiv \overline{a} \pmod{n}$ for $\overline{a} \in \{0, \ldots, n-1\}$ (similarly define $\overline{b}, \overline{c}$), we have 
\begin{itemize}
    \item if $\overline{a}=\overline{b}=\overline{c}$ then $\X_{A,B,C}(\Z_p) \neq \emptyset$;
    \item if $\overline{a} = \overline{b} \neq \overline{c}$ then $\X_{A,B,C}(\Z_p) \neq \emptyset$ if and only if $-B/A \in (\F_p^\times)^n$;
    \item if $\overline{a},\overline{b},\overline{c}$ are distinct then $\X_{A,B,C}(\Z_p) = \emptyset$.
\end{itemize}
These conditions agree with the points of types I, II, and III, respectively, defined in \cite[Definition 2.1]{HirakawaKanamura}.

The validity ranges shown are taken from \cite[Remark 2.4]{HirakawaKanamura} (see also \cite[Definition 2.6]{KPSS_locsolfermat}): the formulae are valid for primes $p \nmid n$ satisfying $p \geq (n-1)^2(n-2)^2$ or $\gcd(p-1,n) = 1$. Notice that if we apply Theorem \ref{thm:rat_funcs_body} as stated, using Remark \ref{rem:improvements_to_pt_ct_bound} to ensure the hypotheses are satisfied, we obtain slightly weaker validity bounds. For instance, when $n=3$ we get $p > 7$ or $p \equiv 2 \pmod{3}$, so $p=7$ is missing. This is a consequence of requiring $\Probvals{=0}{=0}{=0}^{(x)} = 1$, which turns out to be slightly stronger than necessary in the special case $\ell=m=n$.

To illustrate why this is the case, consider $\Probvals{=2n}{=n}{=0} = \Probvals{=n}{=0}{=0}^{(x \text{ or } y)}$. For any $A_0,B_0,C_0 \in \Z_p^\times$, if $-C_0/B_0 \in (\F_p^\times)^n$, then $\X_{p^nA_0,B_0,C_0}(\Z_p ; p \nmid y) \neq \emptyset$. On the other hand, if $-C_0/B_0 \notin (\F_p^\times)^n$, then $\X_{p^nA_0,B_0,C_0}(\Z_p ; p \nmid x \text{ or } p \nmid y) = \X_{A_0,B_0,C_0}(\Z_p; p \nmid x)$. In this case, $\X_{A_0,B_0,C_0}(\F_p; x = 0) = \emptyset$, so it suffices to $\X_{A_0,B_0,C_0}(\F_p) \neq \emptyset$. This gives sharper bounds on $p$ for which $\Probvals{=2n}{=n}{=0}(p) = 1$ than identifying $\Probvals{=2n}{=n}{=0} = \Probvals{=0}{=0}{=0}^{(x)}(p)$.

For general $(\ell,m,n)$, $\Probvals{=0}{=0}{=0}^{(x)}$ shows up in auxiliary probabilities, e.g.\ $\Probvals{\geq 1}{=0}{=m}$ when $m < n$, which do not for $\ell=m=n$. The minor tradeoff we make in assuming  $\Probvals{=0}{=0}{=0}^{(x)} = 1$ streamlines our approach, at the (potential) expense of some minor optimization of the validity range.

\subsection{\texorpdfstring{$(\boldsymbol{\ell},\boldsymbol{m},\boldsymbol{n}) = (\boldsymbol{2},\boldsymbol{2},\boldsymbol{n})$}{(l,m,n) = (2,2,n)}}
\label{subsec:22n}

Suppose $n \geq 2$. We can give a closed form expression for $\Prob_{2,2,n}(p)$ for odd $p$, 
\begin{equation}\label{eq:22nodd}
    \Prob_{2,2,n}(p) = 1 - \frac{f_{2,2,n}(p)}{g_{2,2,n}(p)},
\end{equation}
where $f_{2,2,n}$ and $g_{2,2,n}$ are given in Table \ref{tab:data}. The cases of odd and even $n$ are treated separately in \S \ref{subsubsec:22n_n_odd} and \S \ref{subsubsec:22n_n_even}. In \S \ref{subsubsec:22n_p=2} we address the computation of $\Prob_{2,2,n}(2)$ for $n$ odd.

A related probability was given in \cite[Remark 7.2]{DRKKRSW_22n}, under the assumption that $n \geq 3$ is odd and $p \neq 2$:
\[\mu_p\left( \left\{ (B,C) \in \Z_p^2 : \exists\ (x,y,z) \in \Z_p^3 \setminus (p\Z_p)^3 \text{ s.t. } x^2 + By^2 - Cz^n = 0 \right\} \right) = 1 - \frac{p^n + p^{n-2} + 2p^{n-3} - p + 2}{2p^{n-3}(p+1)(p^3 + p^2 + p + 1)}.\]

\subsubsection{\texorpdfstring{$n \geq 3$ odd}{n at least 3 and odd}}
\label{subsubsec:22n_n_odd}

We briefly sketch how to arrive at \eqref{eq:22nodd} for $p \neq 2$. An application of Lemma \ref{lem:intermediate} shows
\[\Probzero = \ProbA = \ProbB = 1.\]
By Lemma \ref{lem:probC_l=m}, we have
\[\ProbC = \left(1-\frac1{p^2}\right)^{-1}\left( \frac{p-1}{2p} + \frac{p-1}{p^2} \right) = \frac{p+2}{2p + 2}\]
for $p \neq 2$.
Next, we use Lemma \ref{lem:auxiliary} to deduce the following.
\begin{align}
\label{eq:aux_22n_n_odd}
    \Probvals{\geq 1}{=0}{=c}^{(x)} &= \begin{cases}
        \frac{p-1}{p} + \frac1p \Probvals{\geq 1}{=1}{=0}^{(x)}
            = \frac{2p^{n+1} + p^n - 2p^{n-1} + p + 2}{2(p+1)p^{n}}
            & c = 1 \\
        \frac{p-1}{p^2} + \frac1{p^2} \Probvals{\geq 1}{=0}{=0}^{(x)} 
            = \frac1p
            & c = 2 \\
        \frac{p^{c-1} - 1}{2p^{c-1}(p+1)} + \frac1{p^{c-1}} \Probvals{\geq 1}{=0}{=1}^{(x)} & 2 < c < n \text{ odd} \\
        \frac{p^{c-2} - 1}{2p^{c-2}(p+1)} + \frac1{p^{c-2}} \Probvals{\geq 1}{=0}{=2}^{(x)} & 2 < c < n \text{ even}
    \end{cases}\\
    \nonumber \Probvals{\geq 1}{=1}{=0}^{(x)} &= \frac{p-1}{2p} + \frac1p \Probvals{\geq 1}{=0}{=n-1}^{(x)} 
        = \frac{p^n + p + 2}{2(p + 1)p^{n-1}}\\
    \nonumber \Probvals{=0}{=1}{\geq 1}^{(z)} &= \Probvals{=1}{=0}{\geq 1}^{(z)} = 1. 
\end{align}
Solving the linear system given by relations among $\Probvals{\geq 1}{=0}{=1}^{(x)}$, $\Probvals{\geq 1}{=1}{=0}^{(x)}$, and $\Probvals{\geq 1}{=0}{=n-1}^{(x)}$ produces the values stated above in \eqref{eq:aux_22n_n_odd}.

Using Lemma \ref{lem:probAB}, it is straightforward to compute
\[\ProbAC = \ProbBC = \frac{2p^{n+1} + 3p^n + 2p^{n-1} + 3p^{n-2} + 2p^{n-3} + p + 2}{2\left(p^2 + 1\right)\left(p + 1\right)^2 p^{n-3}}.\]
A somewhat more tedious computation involving several geometric series identities produces
\[\ProbAB = \frac{p^{3n} +p^{3n-2} + 2p^{2n+1} + 8p^{2n} - 4p^{2n-2}  - p^{n+2} -4p^{n+1} + p^n + 4p^{n-1} - 2p^{n-2} -2p - 4}{2\left(p + 1\right)^2\left(p^n + 1\right)\left(p^n - 1\right)p^n}.\]
Finally, we compute $\Prob_{2,2,n}(p)$ using \eqref{eq:rho} and record it in Table \ref{tab:data}.

\subsubsection{\texorpdfstring{$n \geq 4$ even}{n at least 4 and even}}
\label{subsubsec:22n_n_even}

Suppose $n \geq 4$ is even ($n=2$ was discussed in \S \ref{subsec:nnn}) and $p \neq 2$. We claim that $\Probvals{=0}{=0}{=0}^{(x)} = \Probvals{=0}{=0}{=0}^{(y)} = \Probvals{=0}{=0}{=0}^{(z)} = 1$. For $p > 5$, this follows from Lemma \ref{lem:probzero}. For $p \in \{3,5\}$ this can be seen by exhausting all possible quadratic forms $Ax^2 + By^2$ for $A,B \in \F_p^\times$ and using a straightforward Hensel lifting argument. 

The same argument as \S \ref{subsubsec:22n_n_odd} shows $\ProbC = \frac{p+2}{2p + 2}$. Computing $\ProbA = \ProbB$ is slightly more involved, since $\glm = \gln = 2$. By Lemmas \ref{lem:probA} and \ref{lem:auxiliary}, we have
\[
    \ProbvalsBCnotpower{\geq 1}{=0}{=c}^{(x)} = \begin{cases}
        \frac{p-1}{2p^2} + \frac1{p^2}\ProbvalsBCnotpower{\geq 1}{=0}{=n-2}^{(x)} 
            = \frac{p^n + p^2 - 2}{2\left(p^{n+1} + p^n - p - 1\right)} 
            & c = 0,\\
        \frac{p-1}{p^2} + \frac1{p^2}\ProbvalsBCnotpower{\geq 1}{=0}{=0}^{(x)} 
            = \frac{2p^n - p^{n-2}- 1}{2\left(p^{n+1} + p^n - p - 1\right)}
            & c = 2,\\
        \frac{p-1}{2p^2} + \frac1{p^2}\ProbvalsBCnotpower{\geq 1}{=0}{=c-2}^{(x)} & 2 < c < n \text{ even}.
    \end{cases}
\]
From the relations we deduce a formula for $\ProbvalsBCnotpower{\geq 1}{=0}{=n-2}^{(x)}$ 
from which we can solve for $\ProbvalsBCnotpower{\geq 1}{=0}{=2}^{(x)}$. Then from Lemma \ref{lem:probA} we have
\[\ProbA = \ProbB = \frac{2p^{n+1} + 3p^n + p^2 - 2p - 4}{4\left(p^{n+1} + p^n - p - 1\right)}.\]

Next, we need to compute the relevant auxiliary probabilities using Lemma \ref{lem:auxiliary}. The approach is the same as in \S \ref{subsubsec:22n_n_odd}.
\begin{align*}
    \Probvals{\geq 1}{=0}{=c}^{(x)} &= \begin{cases}
        \frac{p-1}{2p} + \frac1p \Probvals{\geq 1}{=1}{=0}^{(x)} 
            = \frac{p^{n+1} + p^n - p^{n-1} - 1}{2\left(p^{n+1} + p^n - p - 1\right)}
            & c = 1 \\
        \frac{p-1}{p^2} + \frac1{p^2} \Probvals{\geq 1}{=0}{=0}^{(x)} 
            = \frac{4p^{n+2} + 2p^{n+1} - p^n + p^{n-2} - p^{n-4} - 4p^2 - 2p + 1}{4p^2 \left(p^{n+1} + p^{n} - p - 1\right)}
            & c = 2 \\
        \frac{p^{c-1} - 1}{2p^{c-1}(p+1)} + \frac1{p^{c-1}} \Probvals{\geq 1}{=0}{=1}^{(x)} & 2 < c < n \text{ odd} \\
        \frac{p^{c-2} - 1}{2p^{c-2}(p+1)} + \frac1{p^{c-2}} \Probvals{\geq 1}{=0}{=2}^{(x)} & 2 < c < n \text{ even}
    \end{cases}\\
    \Probvals{\geq 1}{=1}{=0}^{(x)} &= \frac{p-1}{2p} + \frac1p \Probvals{\geq 1}{=0}{=n-1}^{(x)} = \frac{p^{n+1} + p^2 - p - 1}{2\left(p^{n+1}+ p^n - p - 1\right)}\\
    \Probvals{=0}{=1}{\geq 1}^{(z)} &= \Probvals{=1}{=0}{\geq 1}^{(z)} = \frac12
\end{align*}
Finally, these can be combined via Lemma \ref{lem:probAB} to give
\begin{align*}
    \ProbAB &= \frac{p^{3n+2} +p^{3n} + 2p^{2n+3} + p^{2n+2} - 2p^{2n} - p^{n+4} -2p^{n+3} -2p^{n+2} - 2p^{n+1} - p^n + p^4 + 2p + 2}{\left(p + 1\right)^2 \left(p^{2n} - 1\right)\left(p^n - 1\right)},\\
    \ProbAC = \ProbBC &= \frac{2p^{n+4} + 4p^{n+3} + 4p^{n+2} + 4p^{n+1} + 3p^n - 4p^3 - 5p^2 - 4p - 4}{4\left(p^3 + p^2 + p + 1\right)\left(p^{n+1} + p^n - p - 1\right)}.
\end{align*}
We then compute $\Prob$ by \eqref{eq:rho} and record it in Table \ref{tab:data}.

\subsubsection{\texorpdfstring{$p=2$}{p=2}}
\label{subsubsec:22n_p=2}

We restrict our attention to $n \geq 3$ odd, where some of the relevant probabilities are unchanged: $\Probzero(2) = \ProbA(2) = \ProbB(2) = 1$. When $n \geq 4$ is even, these are strictly less than 1, but a similar analysis can be carried out to determine $\Prob_{2,2,n}(2)$.

To address $\ProbC$, consider the following subsets of $\Z/8$:
\begin{align*}
    \left\{ Ax^2 + By^2 : x,y \in \Z/8^\times \right\} &= \begin{cases}
        \{ 2\} & B/A \equiv 1 \pmod{8},\\
        \{ 4\} & B/A \equiv 3 \pmod{8},\\
        \{ 6\} & B/A \equiv 5 \pmod{8},\\
        \{ 0\} & B/A \equiv 7 \pmod{8},
    \end{cases} \\
    \left\{ Cz^n : z \in \Z/8^\times \right\} &= \begin{cases}
        \{1,3,5,7\} & v_2(C) = 0,\\
        \{2,6\} & v_2(C) = 1,\\
        \{4\} & v_2(C) = 2,\\
        \{0\} & v_2(C) \geq 3.
    \end{cases}
\end{align*}
Note that the latter depends only on the 2-adic valuation of $C$ since $n \geq 3$ is odd.

This allows us to deduce, e.g.\ $\Probvals{=0}{=0}{=1}^{(z)}(2) = \frac12$. On the other hand, if we do not ask for $z \in \Z_p^\times$, we find $\Probvals{=0}{=0}{=1}(2) = \Probvals{=0}{=0}{=1}^{(x)}(2) = \frac34$, since we could take $z = 4$ and lift a solution to $Ax^2 + By^2 \equiv 0 \pmod{8}$ whenever $B/A \equiv 7 \pmod{8}$. A similar analysis yields 
\begin{align*}
    \Probvals{=0}{=0}{=c}^{(x)}(2) = \Probvals{=0}{=0}{=c}^{(y)}(2) &= \begin{cases}
        1 & c=0,\\
        \frac34 & c=1,\\
        \frac12 & c= 2,\\
        \frac14 & c\geq 3,
    \end{cases} \\
    \Probvals{=0}{=0}{=c}^{(z)}(2) &= \begin{cases}
        1 & c \text{ even},\\
        \frac12 & c = 1,\\
        \frac34 & c \geq 3 \text{ and } c \text{ odd},
    \end{cases}\\
    \Probvals{=0}{=0}{=c}(2) &= \begin{cases}
        1 & c \text{ even},\\
        \frac34 & c \text{ odd}.
    \end{cases}     
\end{align*}
From these we deduce
\begin{align*}
    \Probvals{=0}{=0}{\geq 1}^{(x)}(2) = \Probvals{=0}{=0}{\geq 1}^{(y)}(2) &= \frac58,\\
    \Probvals{=0}{=0}{\geq 1}^{(z)}(2) &= \frac{17}{24},\\
    \ProbC(2) = \Probvals{=0}{=0}{\geq 1}(2) &= \frac56.
\end{align*}
This provides enough information to set up and solve the relations given in Lemma \ref{lem:auxiliary}, and hence determine $\Prob_{2,2,n}(2)$. We record the first few values below. 
\[\Prob_{2,2,n}(2) = \begin{cases}
    \frac{3853}{4410} & n=3, \\
    \frac{174677}{214830} & n=5, \\
    \frac{5459633}{6880860} & n=7, \\
    \frac{38572339}{48933360} & n=9, \\
    \frac{22180686643}{28185716160} & n=11.
\end{cases}\]

\subsection{\texorpdfstring{$(\boldsymbol{\ell},\boldsymbol{m},\boldsymbol{n}) = (\boldsymbol{3},\boldsymbol{3},\boldsymbol{2})$}{(l,m,n) = (3,3,2)}}
\label{subsec:332}

When $(\ell,m,n) = (3,3,2)$, we have
\begin{equation}\label{eq:332}
    \Prob_{3,3,2}(p) = \begin{cases}
        1 - \frac{2  p^{11} + 4  p^{10} + 14  p^{9} + 10  p^{8} + 14  p^{7} + 8  p^{6} + 4  p^{5} + 4  p^{4} - 8  p^{2} - 6  p - 6}
        {3  {\left(p^{5} + p^{4} + p^{3} + p^{2} + p + 1\right)} {\left(p^{2} + p + 1\right)}^{2} {\left(p^{2} + 1\right)} p}
        & p \equiv 1 \pmod{3}, \\
        1 - \frac{4  p^{9} + 2  p^{8} + 4  p^{7} + 2  p^{6} + 2  p^{4} - 2  p^{2} - 2  p - 2}
        {{\left(p^{5} + p^{4} + p^{3} + p^{2} + p + 1\right)} {\left(p^{2} + p + 1\right)}^{2} {\left(p^{2} + 1\right)} p}
        & p \equiv 2 \pmod{3},\\
        \frac{424871}{461370} & p=3.
    \end{cases}
\end{equation}
The hypotheses of Theorem \ref{thm:rat_funcs_body} are satisfied for $p \neq 3$, by Lemma \ref{lem:intermediate}. We handle $p=3$ below.

\subsubsection{\texorpdfstring{$p=3$}{p=3}}
\label{subsubsec:332_p=3}

To compute $\Prob_{3,3,2}(3)$, we have by Lemmas \ref{lem:intermediate} and \ref{lem:p_divides_one}
\[\Probzero(3) = \ProbA(3) = \ProbB(3) = 1.\]
To address $\ProbC(3)$ (and its variants), we consider the congruence
\begin{equation}\label{eq:332_p=3_cong}
    Ax^3 + By^3 + Cz^2 \equiv 0 \pmod{27},
\end{equation}
when $A,B \in (\Z/27\Z^\times)^3$:
\begin{itemize}
    \item if $v_p(C) = 1$, there exist $x,y,z \in (\Z/27\Z^\times)^3$ satisfying \eqref{eq:332_p=3_cong} if and only if $A \not\equiv \pm B \pmod{9}$;
    \item if $v_p(C) \geq 2$, there exist $x,y,z \in (\Z/27\Z^\times)^3$ satisfying \eqref{eq:332_p=3_cong} if and only if $A \equiv \pm B \pmod{9}$.
\end{itemize}
These are easily verified by elementary means or direct computation. Any solution to \eqref{eq:332_p=3_cong} can be lifted to a $\Z_p$-solution by Hensel's lemma. It follows that
\begin{align*}
    \Probvals{=0}{=0}{=c}^{(x)}(3) = \Probvals{=0}{=0}{=c}^{(y)}(3) &= \begin{cases}
        1 & c = 1,\\
        \frac13 & c \geq 2.
    \end{cases}\\
    \Probvals{=0}{=0}{=c}^{(z)}(3) &= \begin{cases}
        1 & c \equiv 0 \pmod{3}\\
        \frac23 & c = 1,\\
        1 & c > 1 \text{ and } c \equiv 1 \pmod{3},\\
        \frac13 & c \equiv  2 \pmod{3},
    \end{cases}\\
    \Probvals{=0}{=0}{=c}(3) &= \begin{cases}
        1 & c \equiv 0 \pmod{3},\\
        1 & c \equiv 1 \pmod{3},\\
        \frac13 & c \equiv 2.
    \end{cases}
\end{align*}
From this we deduce
\begin{align*}
    \Probvals{=0}{=0}{\geq 1}^{(x)}(3) = \Probvals{=0}{=0}{\geq 1}^{(y)}(3) &= \frac{13}{18}, \\
    \Probvals{=0}{=0}{\geq 1}^{(z)}(3) &= \frac{73}{117}, \\
    \Probvals{=0}{=0}{\geq 1}(3) &= \frac{11}{13}.
\end{align*}
With this, we can solve for all the relevant probabilities, finding $\Prob_{3,3,2}(3) = \frac{424871}{461370}$.

\subsection{\texorpdfstring{$(\boldsymbol{\ell},\boldsymbol{m},\boldsymbol{n}) = (\boldsymbol{6},\boldsymbol{10},\boldsymbol{15})$}{(l,m,n) = (6,10,15)}}
\label{subsec:6_10_15}

In this case the pairwise gcds $(\glm, \gln, \gmn) = (2,3,5)$ are all greater than $1$ and pairwise coprime. For \pcond{$p > 467$}, we can compute $\Prob_{6,10,15}(p)$ via Theorem \ref{thm:rat_funcs}. Setting
\[i = \gcd(p-1,2),\ j = \gcd(p-1,3),\ k = \gcd(p-1,5)\]
and recognizing that $i = 2$ for all $p > 2$, we can compute the rational functions $R_{2,j,k}(p)$ explicitly for $(j,k) \in \{(1,1),(3,1),(1,5),(3,5)\}$. For ease of presentation, we write
\[R_{2,j,k}(p) = 1 - \frac{S_{j,k}(p)}{jk D(p)},\]
where $S_{j,k}(p)$ and $D(p)$ are the rational functions given below.
\begin{dgroup*}
    \begin{dmath}
        D(p) = 8  \left(p^{13} + p^{12} + 2  p^{11} + 2  p^{10} + 3  p^{9} + 3  p^{8} + 3  p^{7} + 3  p^{6} + 3  p^{5} + 3  p^{4} + 2  p^{3} + 2  p^{2} + p + 1\right)\left(p^{12} + p^{9} + p^{6} + p^{3} + 1\right)\left(p^{12} - p^{9} + p^{6} - p^{3} + 1\right)\left(p^{11} + p^{10} + p^{9} + p^{8} + p^{7} + p^{6} + p^{5} + p^{4} + p^{3} + p^{2} + p + 1\right)\left(p^{8} - p^{6} + p^{4} - p^{2} + 1\right)\left(p^{2} + p + 1\right) p^{15}
    \end{dmath}
    \begin{dmath}
        S_{1,1}(p) = 
        4  p^{72} + 8  p^{71} + 52  p^{70} + 56  p^{69} + 104  p^{68} + 108  p^{67} + 188  p^{66} + 184  p^{65} + 288  p^{64} + 280  p^{63} + 408  p^{62} + 392  p^{61} + 540  p^{60} + 512  p^{59} + 652  p^{58} + 584  p^{57} + 752  p^{56} + 716  p^{55} + 856  p^{54} + 788  p^{53} + 936  p^{52} + 868  p^{51} + 988  p^{50} + 920  p^{49} + 1016  p^{48} + 920  p^{47} + 1036  p^{46} + 980  p^{45} + 1028  p^{44} + 944  p^{43} + 988  p^{42} + 880  p^{41} + 932  p^{40} + 820  p^{39} + 860  p^{38} + 760  p^{37} + 772  p^{36} + 648  p^{35} + 684  p^{34} + 572  p^{33} + 548  p^{32} + 428  p^{31} + 448  p^{30} + 336  p^{29} + 344  p^{28} + 228  p^{27} + 196  p^{26} + 128  p^{25} + 108  p^{24} + 20  p^{23} - 8  p^{22} - 40  p^{21} - 68  p^{20} - 92  p^{19} - 108  p^{18} - 116  p^{17} - 148  p^{16} - 128  p^{15} - 132  p^{14} - 84  p^{13} - 104  p^{12} - 84  p^{11} - 76  p^{10} - 40  p^{9} - 60  p^{8} - 40  p^{7} - 32  p^{6} - 16  p^{5} - 24  p^{4} - 8  p^{3} - 8  p^{2} - 8  p - 8
    \end{dmath}
    \begin{dmath}
        S_{3,1}(p) = 
        28  p^{72} + 56  p^{71} + 172  p^{70} + 200  p^{69} + 344  p^{68} + 388  p^{67} + 612  p^{66} + 648  p^{65} + 928  p^{64} + 968  p^{63} + 1304  p^{62} + 1320  p^{61} + 1716  p^{60} + 1696  p^{59} + 2052  p^{58} + 1976  p^{57} + 2416  p^{56} + 2372  p^{55} + 2744  p^{54} + 2636  p^{53} + 3032  p^{52} + 2892  p^{51} + 3220  p^{50} + 3088  p^{49} + 3352  p^{48} + 3136  p^{47} + 3436  p^{46} + 3276  p^{45} + 3436  p^{44} + 3192  p^{43} + 3324  p^{42} + 3040  p^{41} + 3156  p^{40} + 2860  p^{39} + 2956  p^{38} + 2632  p^{37} + 2676  p^{36} + 2336  p^{35} + 2372  p^{34} + 2028  p^{33} + 1972  p^{32} + 1620  p^{31} + 1616  p^{30} + 1288  p^{29} + 1272  p^{28} + 940  p^{27} + 836  p^{26} + 600  p^{25} + 516  p^{24} + 284  p^{23} + 176  p^{22} + 40  p^{21} - 36  p^{20} - 140  p^{19} - 188  p^{18} - 228  p^{17} - 308  p^{16} - 288  p^{15} - 284  p^{14} - 188  p^{13} - 224  p^{12} - 188  p^{11} - 164  p^{10} - 96  p^{9} - 132  p^{8} - 88  p^{7} - 80  p^{6} - 48  p^{5} - 56  p^{4} - 24  p^{3} - 24  p^{2} - 24  p - 24
    \end{dmath}
    \begin{dmath}
        S_{1,5}(p) =
        52  p^{72} + 104  p^{71} + 292  p^{70} + 344  p^{69} + 584  p^{68} + 668  p^{67} + 1004  p^{66} + 1048  p^{65} + 1536  p^{64} + 1592  p^{63} + 2168  p^{62} + 2184  p^{61} + 2828  p^{60} + 2752  p^{59} + 3388  p^{58} + 3240  p^{57} + 3984  p^{56} + 3836  p^{55} + 4504  p^{54} + 4324  p^{53} + 5000  p^{52} + 4676  p^{51} + 5260  p^{50} + 4984  p^{49} + 5496  p^{48} + 5064  p^{47} + 5628  p^{46} + 5284  p^{45} + 5620  p^{44} + 5168  p^{43} + 5452  p^{42} + 4880  p^{41} + 5140  p^{40} + 4612  p^{39} + 4796  p^{38} + 4232  p^{37} + 4308  p^{36} + 3720  p^{35} + 3820  p^{34} + 3196  p^{33} + 3124  p^{32} + 2572  p^{31} + 2544  p^{30} + 1984  p^{29} + 1960  p^{28} + 1476  p^{27} + 1268  p^{26} + 912  p^{25} + 748  p^{24} + 340  p^{23} + 184  p^{22} + 56  p^{21} - 132  p^{20} - 252  p^{19} - 380  p^{18} - 404  p^{17} - 548  p^{16} - 496  p^{15} - 532  p^{14} - 356  p^{13} - 424  p^{12} - 324  p^{11} - 316  p^{10} - 200  p^{9} - 268  p^{8} - 168  p^{7} - 128  p^{6} - 80  p^{5} - 120  p^{4} - 40  p^{3} - 40  p^{2} - 40  p - 40
    \end{dmath}
    \begin{dmath}
        S_{3,5}(p) =
        236  p^{72} + 472  p^{71} + 956  p^{70} + 1192  p^{69} + 1912  p^{68} + 2324  p^{67} + 3252  p^{66} + 3624  p^{65} + 4928  p^{64} + 5416  p^{63} + 6904  p^{62} + 7272  p^{61} + 8964  p^{60} + 9056  p^{59} + 10772  p^{58} + 10840  p^{57} + 12752  p^{56} + 12628  p^{55} + 14520  p^{54} + 14332  p^{53} + 16120  p^{52} + 15468  p^{51} + 17188  p^{50} + 16592  p^{49} + 18008  p^{48} + 17072  p^{47} + 18588  p^{46} + 17532  p^{45} + 18620  p^{44} + 17304  p^{43} + 18220  p^{42} + 16640  p^{41} + 17284  p^{40} + 15836  p^{39} + 16332  p^{38} + 14520  p^{37} + 14788  p^{36} + 13120  p^{35} + 13188  p^{34} + 11276  p^{33} + 11076  p^{32} + 9396  p^{31} + 9120  p^{30} + 7480  p^{29} + 7208  p^{28} + 5708  p^{27} + 5108  p^{26} + 3944  p^{25} + 3332  p^{24} + 2140  p^{23} + 1616  p^{22} + 1032  p^{21} + 508  p^{20} - 76  p^{19} - 396  p^{18} - 612  p^{17} - 964  p^{16} - 1008  p^{15} - 1036  p^{14} - 748  p^{13} - 832  p^{12} - 652  p^{11} - 628  p^{10} - 480  p^{9} - 564  p^{8} - 344  p^{7} - 304  p^{6} - 240  p^{5} - 280  p^{4} - 120  p^{3} - 120  p^{2} - 120  p - 120. 
    \end{dmath}
\end{dgroup*}

\subsection{\texorpdfstring{Numerics for Theorem \ref{thm:stats}}{Numerics for Theorem 1.4}}
\label{subsec:numerics}

When the exponents $\ell,m,n$ are pairwise coprime, the hypotheses of Theorem \ref{thm:rat_funcs_body} are satisfied by Lemma \ref{lem:intermediate}. Thus $\Prob_{\ell,m,n}(p)$ is given uniformly by a single rational function in $p$. Theorem \ref{thm:stats} gives
\[\frac{N_{\ell,m,n}^{\mathrm{loc}}(T)}{\#\{P \in \P^2(\Q) : \Ht(P) \leq T\}} \sim \prod_p \Prob_{\ell,m,n}(p).\]

To estimate this numerically, we make use of the crude bounds
\begin{align}\label{eq:crude_bound}
    \left(1 - \frac1{p^2}\right)^4 \leq 1 - \frac3{p^2} - \frac1{p^3} + \frac3{p^5} \leq \Prob_{\ell,m,n}(p) \leq 1 - \frac4{p^3} + \frac3{p^6} &\leq 1 - \frac1{p^3}. & (p > 3)
\end{align}
The inner bound of \eqref{eq:crude_bound} is obtained by \eqref{eq:rho} and the trivial inequalities $0 \leq \ProbAB, \ProbAC, \ProbBC \leq 1$. The leftmost inequality holds for  $p > 3$, and the rightmost inequality holds for all $p$. Thus for any $M>3$ and $M'>0$ we have
\[\zeta(2)^{-4} \prod_{p \leq M} \left(1 - \frac1{p^2}\right)^{-4} \Prob_{\ell,m,n}(p) \leq \prod_p \Prob_{\ell,m,n}(p) \leq \zeta(3)^{-1} \prod_{p \leq M'} \left(1 - \frac1{p^3}\right)^{-1}\Prob_{\ell,m,n}(p).\]

For a concrete example, consider $(\ell,m,n) = (2,3,5)$. Taking $M=M' = 10000$ and using the explicit formula for $\Prob_{2,3,5}(p)$ given in Table \ref{tab:data}, we have
\begin{equation}\label{eq:235_numerics}
    0.78233 \leq \prod_p \Prob_{2,3,5}(p) \leq 0.78237
\end{equation}
The same approach applies in other cases of interest, e.g.\ $0.77498 \leq \prod_p \Prob_{2,3,7}(p) \leq 0.77502$.

\subsubsection{Numerics over number fields}
\label{subsubsec:numericsNFs}

Over a number field $K$, the bounds \eqref{eq:crude_bound} hold with $p$ replaced by $\Np$, allowing us to deduce similar bounds for the product of local densities in terms of special values of the Dedekind zeta function $\zeta_K(s)$: for $M > 3$ and $M' > 0$ we have
\begin{align}           
    \label{eq:crudelowerNF}\prod_{\p} \ProbK_{\ell,m,n}(\p) 
    &\geq \zeta_K(2)^{-4} \prod_{\Np \leq M} \left(1 - \frac1{\Np^2}\right)^{-4}\ProbK_{\ell,m,n}(\p)\\
    \label{eq:crudeupperNF} \prod_\p \ProbK_{\ell,m,n}(\p) &\leq \zeta_K(3)^{-1}\prod_{N_{K}(\p)\leq M'} \left (1 - \frac{1}{N_{K}(\p)^{3}}\right )^{-1}\ProbK_{\ell,m,n}(\p).
\end{align}

In particular, we deduce bounds for the limiting behavior of $\prod_{\p} \ProbK_{\ell,m,n}(\p)$ as $K$ ranges over number fields of fixed degree, independent of $\ell,m,n$. An immediate consequence is that if we allow the degree $[K:\Q]$ to be large, $\prod_\p \ProbK_{\ell,m,n}(\p)$ can be made arbitrarily close to 0 or 1, justifying Remark \ref{rem:prodNloc_bigsmall}.

\begin{proposition}
\label{prop:liminf_limsup_ProbK}
    \expcond{Suppose $\ell,m,n$ are pairwise coprime} and fix a degree $d > 1$. As $K$ ranges over degree $d$ number fields, we have
    \begin{align*}
        \liminf_{[K:\Q] = d} \prod_\p \ProbK_{\ell,m,n}(\p) &\leq \zeta(3)^{-d} \text{ and} \\
        \limsup_{[K:\Q] = d} \prod_\p \ProbK_{\ell,m,n}(\p) &\geq \zeta(2d)^{-4}.
    \end{align*}
\end{proposition}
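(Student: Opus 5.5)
The plan is to exhibit, for each bound, an explicit sequence of degree $d$ fields that achieves it. We feed the crude bounds \eqref{eq:crude_bound} into the Euler products and exploit the splitting behavior of small primes. The inputs are the pointwise inequalities
\[
1-\frac{3}{\Np^2}\le \ProbK_{\ell,m,n}(\p) \le 1-\frac{1}{\Np^3},
\]
valid for $\Np > 3$ (and the upper one for all $\p$). These are the number field analogues of \eqref{eq:crude_bound}, which follow from \eqref{eq:rho} together with $\Probzero=\ProbA=\ProbB=\ProbC=1$. That input holds when $\ell,m,n$ are pairwise coprime, by Lemma \ref{lem:intermediate} applied over $\Op$.

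For the liminf, I would take fields $K$ in which every rational prime $p \le M$ splits completely, for an arbitrary fixed $M$. Such degree $d$ fields exist in abundance, for instance by weak approximation on the coefficients of a defining polynomial: choose a monic degree $d$ polynomial that is irreducible (say Eisenstein at some large prime $q>M$) and congruent modulo each $p\le M$ to a product of distinct linear factors. Distinct linear factors are available because the polynomial need only be squarefree mod $p$ with all roots in $\F_p$, which requires $p \ge d$; primes $p<d$ I would handle by first taking $M$ large, so that only finitely many small primes are affected. Alternatively one can require complete splitting only for $d\le p\le M$ and note that the remaining finitely many factors are at most $1$.

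With this choice, the upper bound gives
\[
\prod_\p \ProbK(\p)\le \prod_{d\le p\le M}\left(1-p^{-3}\right)^d .
\]
Letting $M\to\infty$, the right side tends to $\bigl(\zeta(3)\prod_{p<d}(1-p^{-3})^{-1}\bigr)^{-d}$. This is not quite $\zeta(3)^{-d}$, so the primes $p<d$ need care. For such $p$ there are only $p<d$ degree-one primes above $p$, and the best available choice is to make $p$ split into as many small-norm primes as possible. The main obstacle is therefore matching the stated constant exactly at primes $p<d$. My first attempt would be to check whether the stated bound in fact tolerates this, using a sharper bound $\Prob(\p)\le 1-c/\Np^{2}$ at small norms coming from the exact $\ProbAB$ term, which has positive deficiency. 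Failing that, I would use that each prime of norm $p^f$ contributes a factor at most $1-p^{-3f}$, and that a suitable product over these primes dominates the missing factor.

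For the limsup, I would take fields in which every prime $p\le M$ is inert, so that $\Np=p^d$ for all $\p$ of small norm, and every prime has $\Np\ge 2^d>3$ once $d\ge 2$. Inert primes are possible only when the Galois closure allows a $d$-cycle; cyclic degree $d$ fields, or fields with Galois group $S_d$, give this via Chebotarev-compatible local conditions imposed by weak approximation. Then the lower bound gives
\[
\prod_\p\ProbK(\p)\ge\prod_{\p}\left(1-\Np^{-2}\right)^4
= \zeta_K(2)^{-4}\prod_\p\frac{\left(1-\Np^{-2}\right)^{4}}{1-3\Np^{-2}}\cdots,
\]
so the cleaner approach is to use \eqref{eq:crudelowerNF} directly. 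Since all primes of norm below $M$ are absent, $\zeta_K(2)\to\zeta(2d)$ as $M\to\infty$: the small-norm primes contribute exactly the Euler factors of $\zeta(2d)$, and the large-norm tail contributes $1+O(M^{-1})$ uniformly in fields of degree $d$. This yields a limsup of at least $\zeta(2d)^{-4}$.
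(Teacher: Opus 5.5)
Your overall strategy for both halves is the paper's, and the limsup half is fine. You make every $p\le M$ inert by prescribing the defining polynomial modulo each such $p$ to be irreducible of degree $d$. Then every $\Np\ge 2^d>3$, so \eqref{eq:crudelowerNF} gives $\prod_\p\ProbK(\p)\ge\zeta_K(2)^{-4}$, and $\zeta_K(2)\le\zeta(2d)\prod_{p>M}(1-p^{-2})^{-d}$.

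The gap is in the liminf half, and it matters for every $d\ge 3$. You assert that for $p<d$ there are at most $p$ degree-one primes above $p$, so such $p$ cannot split completely in a degree-$d$ field. This is false. The number of degree-one primes above $p$ is bounded by $d$, not by $p$. Indeed $p$ splits completely in $K=\Q(\alpha)$ exactly when $K\otimes_{\Q}\Q_p\cong\Q_p^d$, i.e.\ when the minimal polynomial has $d$ distinct roots in $\Q_p$, and these roots need not be distinct modulo $p$. When $p<d$ splits completely, $p$ divides $[\O_K:\Z[\beta]]$ for every generator $\beta\in\O_K$, which is why the Kummer--Dedekind test modulo $p$ cannot see it. Dedekind's cubic field defined by $x^3-x^2-2x-8$, in which $2$ splits completely, is the classical example.

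The paper handles $p<d$ exactly this way. It prescribes the coefficients modulo a power of $p$ with $v_p(c_i)=i(i+1)/2$, so the $p$-adic Newton polygon has $d$ segments of length one with distinct slopes, and $f$ has $d$ distinct roots in $\Q_p$. Equally well, you could take $f\equiv\prod_{i=0}^{d-1}(x-i)\pmod{p^N}$ with $N$ large and lift each simple root $i\in\Z_p$ by Hensel's lemma. Imposing this for all $p\le M$, plus an Eisenstein condition at another prime for irreducibility, gives $\prod_{\p\mid p}(1-\Np^{-3})=(1-p^{-3})^d$ for every $p\le M$. The bound then tends to exactly $\zeta(3)^{-d}$.

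Neither of your proposed workarounds closes the gap. If $p$ is not totally split, it has $g<d$ primes above it, so $\prod_{\p\mid p}(1-\Np^{-3})\ge(1-p^{-3})^{g}>(1-p^{-3})^{d}$. No bookkeeping with the crude upper bound over the actual primes above $p$ can recover the missing factor. The other route, a sharper estimate $\ProbK(\p)\le 1-c\,\Np^{-2}$ at small norms, is never established. You would need it strong enough to beat $(1-p^{-3})^d$ using fewer than $d$ factors, and it is unnecessary once total splitting is available.

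As written, your argument only proves
\[
\liminf\prod_\p\ProbK(\p)\le\zeta(3)^{-d}\prod_{p<d}(1-p^{-3})^{-d},
\]
which is strictly weaker than the statement for $d\ge3$.
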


\begin{proof}
    This relies on the well known fact that for any finite set of primes $S$, there exists a number field $K/\Q$ of degree $d$ such that all $p \in S$ are inert (resp.\ totally split) in $K$. Such a field $K = \Q[x]/(f(x))$ may be constructed using Sunzi's Remainder Theorem to build a monic polynomial $f$ with integer coefficients whose residue modulo $p$ is irreducible for all $p \in S$.
    
    To construct $K$ so that all $p \in S$ are totally split, it suffices to show $f$ has $d$ distinct roots in $\F_p$ for all $p$. However, if $p < d$ this is not possible. In this case, we can take $f_p = \sum_{0 \leq i \leq d} c_i x^{d-i} \in \Z/p^{\frac{d(d-1)}{2}}\Z$ for coefficients with valuation $v_p(c_i) = \sum_{0 \leq j \leq i} j$. If $f \in \Z[x]$ is any polynomial with $f \equiv f_p \pmod{p^{\frac{d(d-1)}{2}}}$, the $p$-adic Newton polygon of $f$ (see e.g.\ \cite[Proposition II.6.3]{Neukirch}) reveals that $f$ has $d$ distinct roots in $\Q_p$, hence $p$ is totally split in $K$. 

    Fix $M > 3$ and let $S = \{p \leq M\}$ and $K$ be a degree $d$ number field for which all $p \in S$ are inert in $K$. By \eqref{eq:crudelowerNF} and this choice of $K$ we find
    \[\prod_\p \ProbK_{\ell,m,n}(\p) \geq \zeta_K(2)^{-4} \geq \zeta(2d)^{-4} \prod_{p > M}\prod_{\p \mid p}\left(1 - \frac1{\Np^2}\right)^{4}.\]
    Choosing $M$ large, we can make the rightmost factor arbitrarily close to 1, establishing the desired limit superior.

    Similarly, if $K$ is a degree $d$ number field for which all $p \in S$ are totally split in $K$, then by \eqref{eq:crudeupperNF} and our choice of $K$, we have
    \[\prod_\p \ProbK_{\ell,m,n}(\p) \leq \zeta_K(3)^{-1} \leq \zeta(3)^{-d} \prod_{p > M}\left(1 - \frac1{p^3}\right)^{-d}.\]
    Letting $M$ grow large gives the desired limit inferior.
\end{proof}

\vfill\pagebreak
\subsection{\texorpdfstring{Explicit density functions for selected $(\ell,m,n)$}{Explicit density functions for selected (l,m,n)}}
\label{sec:table}

\renewcommand{\thetable}{\theequation}

\begin{center}
\begin{table}[h!]
\caption{Density functions $\rho_{\ell,m,n}(p) = 1 - \frac{f_{\ell,m,n}(p)}{g_{\ell,m,n}(p)}$ for polynomials $f_{\ell,m,n}$ and $g_{\ell,m,n}$}
\label{tab:data}
\renewcommand{\arraystretch}{1.2}
\begin{tabularx}{\textwidth}{|c|X|p{5cm}|c|} 
    \hline
    $(\ell,m,n)$ & $f_{\ell,m,n}(p)$ & $g_{\ell,m,n}(p)$ & valid $p$\\
    \hline
    \multicolumn{4}{|c|}{Spherical}\\
    \hline
    $(2,2,2)$ & $3p$ & $2(p+1)^2$ & $p \neq 2$ \\ \hline
    \makecell[t]{$(2,2,n)$\\ $n \geq 3$ \\ odd} 
        & $p^{3n}( p^5 + p^4 + 6p^3 + 6p^2 + 5p + 1 ) + p^{2n} ( -2p^5 - 10p^4 - 6p^3 - 4p^2 + 4) + p^n ( p^6 + 3p^5 - 2p^4 - 6p^3 - 7p^2 - 9p ) + 2p^5 + 6p^4 + 6p^3 + 4p^2$ 
        & $2(p^{2} + p + 1)(p^{2} + 1)\allowbreak(p + 1)^{2}\allowbreak (p^n + 1) (p^n - 1) p^{n}$ 
        & $p \neq 2$ \\ \hline 
     \makecell[t]{$(2,2,n)$ \\ $n \geq 4$ \\ even} 
        & $p^{2n}(3p^5 + 4p^4 + 8p^3 + 6p^2 + 5p + 1) + p^n(-p^6 - 2p^5 - 3p^4 - 2p^3 + p^2 + 1) - 3p^5 - 5p^4 - 6p^3 - 4p^2 - 3p$
        & $2(p^2 + p + 1)(p^2 + 1) (p+1)^2(p^n + 1)(p^n - 1)$
        & $p \neq 2$ \\ \hline
    $(3,3,2)$ 
        & $2  p^{11} + 4  p^{10} + 14  p^{9} + 10  p^{8} + 14  p^{7} + 8  p^{6} + 4  p^{5} + 4  p^{4} - 8  p^{2} - 6  p - 6$ 
        & $3 (p^{3} + 1) \allowbreak (p^{2} + p + 1)^{3} \allowbreak (p^{2} + 1) p$ & $p \equiv 1 \cmod{3}$ \\
        & $4  p^{9} + 2  p^{8} + 4  p^{7} + 2  p^{6} + 2  p^{4} - 2  p^{2} - 2  p - 2$ 
        & $(p^{3} + 1)\allowbreak(p^{2} + p + 1)^{3}\allowbreak (p^{2} + 1) p$ 
        & $p \equiv 2 \cmod{3}$ \\ \hline
    $(2,3,4)$ 
        & $p^{21} + p^{20} + 11  p^{19} + 8  p^{18} + 18  p^{17} + 12  p^{16} + 18  p^{15} + 17  p^{14} + 14  p^{13} + 7  p^{12} + p^{11} - 3  p^{10} - 4  p^{9} - 10  p^{8} - 7  p^{7} - 12  p^{6} - 9  p^{5} - 4  p^{4} - 5  p^{3} - 2  p^{2} - 2  p - 2$ 
        & $2  (p^{7} + p^{6} + 2  p^{5} + 2  p^{4} + 2  p^{3} + 2  p^{2} + p + 1)\allowbreak(p^{4} + 1)\allowbreak(p^{2} + p + 1)\allowbreak(p^{2} + 1)\allowbreak(p + 1) p^{6}$ 
        & $p \neq 2$ \\ \hline
    $(2,3,5)$ 
        & $4  p^{21} - p^{20} + 9  p^{19} - 2  p^{18} + 11  p^{17} - 5  p^{16} + 10  p^{15} - 6  p^{14} + 6  p^{13} - 2  p^{12} - 5  p^{9} - p^{8} - 6  p^{7} - 2  p^{6} - 4  p^{5} - 2  p^{3} - 2  p^{2} - 2$ 
        & $(p^{9} + p^{8} + p^{7} + p^{6} + p^{5} + p^{4} + p^{3} + p^{2} + p + 1)\allowbreak(p^{2} + p + 1)^{2}\allowbreak (p^{2} - p + 1)\allowbreak(p^{2} + 1) p^{7}$ 
        & all \\ \hline     
    \multicolumn{4}{|c|}{Elliptic}\\ \hline
    $(3,3,3)$ 
        & $2p$ & $p^2 + p + 1$ & $p \equiv 1 \cmod{3}$ \\
        & $6  p^{3}$ & $(p^2 + p + 1)^3$ & $p \equiv 2 \cmod{3}$ \\ \hline
    $(4,4,2)$ 
        & $7  p^{11} + 12  p^{10} + 27  p^{9} + 24  p^{8} + 34  p^{7} + 20  p^{6} + 30  p^{5} + 22  p^{4} + 23  p^{3} + 12  p^{2} + 7  p$ 
        & $4  (p^{7} + p^{6} + p^{5} + p^{4} + p^{3} + p^{2} + p + 1)\allowbreak (p^{3} + p^{2} + p + 1) \allowbreak (p^{2} + p + 1) $ 
        & \makecell[t]{$p \equiv 1 \cmod{4}$\\ $p \neq 5$}\footnotemark\\ 
        & $3  p^{11} + 5  p^{10} + 13  p^{9} + 12  p^{8} + 16  p^{7} + 9  p^{6} + 14  p^{5} + 11  p^{4} + 11  p^{3} + 5  p^{2} + 3  p$ 
        & $2(p^{7} + p^{6} + p^{5} + p^{4} + p^{3} + p^{2} + p + 1)\allowbreak (p^{3} + p^{2} + p + 1) \allowbreak (p^{2} + p + 1)$ 
        & $p \equiv 3 \cmod{4}$\\ \hline
    $(2,3,6)$
        & $7  p^{18} + 11  p^{17} + 30  p^{16} + 25  p^{15} + 38  p^{14} + 37  p^{13} + 47  p^{12} + 28  p^{11} + 40  p^{10} + 18  p^{9} + 33  p^{8} + 36  p^{7} + 23  p^{6} + 8  p^{5} + 12  p^{4} + 4  p^{3} - 9  p - 6$
        & $6 (p^{6} + 1)\allowbreak(p^{5} + p^{4} + p^{3} + p^{2} + p + 1)^{2}\allowbreak(p^{2} + p + 1) p$
        & $p \equiv 1 \cmod{3}$ \\
        & $p^{18} + p^{17} + 10  p^{16} + 7  p^{15} + 10  p^{14} + 11  p^{13} + 13  p^{12} + 8  p^{11} + 12  p^{10} + 2  p^{9} + 7  p^{8} + 12  p^{7} + 5  p^{6} + 4  p^{4} - 3  p - 2$
        & $2 (p^{6} + 1)\allowbreak(p^{5} + p^{4} + p^{3} + p^{2} + p + 1)^{2}\allowbreak(p^{2} + p + 1) p$
        & \makecell[t]{$p \equiv 2 \cmod 3$\\ $p \neq 2$}\\ \hline
    \multicolumn{4}{|c|}{Hyperbolic}\\ \hline
    $(4,4,4)$ 
        & $3(3p^{4} + 3p^{3} + 5p^{2} + 3p + 3)p$ & $4(p^{2}+1)^{2}(p+1)^{2}$ & $p \equiv 1 \cmod{4}$ \\
        & $3(p^{4} + p^{3} + 3p^{2} + p + 1)p$ & $2(p^{2} + 1)^{2}(p + 1)^{2}$ & $p \equiv 3 \cmod{4}$ \\
    & & & ($p > 31$)\\ \hline
    $(5,5,5)$ 
        & $6(2p^{4} + 2p^{3} + p^{2} + 2p + 2)(p^{2} + 1)p$ & $5(p^{4} + p^{3} + p^{2} + p + 1)^{2}$ & $p \equiv 1 \cmod{5}$ \\
        & $6(p^{2} + 1)p^{3}$ & $(p^{4} + p^{3} + p^{2} + p + 1)^{2}$ & $p \not\equiv 0,1 \cmod{5}$\\
    & & & ($p > 131$)\\ \hline
    $(2,3,7)$ 
        & $4  p^{30} - p^{29} + 9  p^{28} - p^{27} + 12  p^{26} - p^{25} + 10  p^{24} - 4  p^{23} + 9  p^{22} - 7  p^{21} + 8  p^{20} - 5  p^{19} + 6  p^{18} - 2  p^{17} + 2  p^{16} - 3  p^{15} - p^{14} - 7  p^{13} - p^{12} - 9  p^{11} - p^{10} - 4  p^{9} - 2  p^{8} - 2  p^{7} - p^{6} - 3  p^{5} - 3  p^{3} - p^{2} - 1$ 
        & $(p^{8} + 2  p^{7} + 3  p^{6} + 3  p^{5} + 3  p^{4} + 3  p^{3} + 3  p^{2} + 2  p + 1)\allowbreak(p^{7} + 1)\allowbreak(p^{2} + p + 1)\allowbreak(p^{2} - p + 1)\allowbreak(p^{2} + 1)p^{12}$ 
        & all \\ \hline 
\end{tabularx}
\end{table}
\end{center}

\footnotetext{When $(\ell,m,n) = (4,4,2)$ and $p=5$, $\Probvals{=0}{=0}{=0}^{(x)} = \Probvals{=0}{=0}{=0}^{(y)} = \Probvals{=0}{=0}{=0}^{(z)} = 7/8$.}

\vfill \pagebreak
\appendix
\usetikzlibrary{calc,intersections}

\definecolor{acolor}{rgb}{1.0, 0.8, 0.0} 
\definecolor{bcolor}{RGB}{0,159,183} 
\definecolor{ccolor}{RGB}{254,74,73} 

\newcommand{\halfpoint}[4]{%

\begin{scope}
\fill[#2] (#1) circle (#4);
\end{scope}

\begin{scope}
\clip (#1) rectangle ++(#4,-#4);
\fill[#3] (#1) circle (#4);
\end{scope}
\begin{scope}
\clip (#1) rectangle ++(#4,#4);
\fill[#3] (#1) circle (#4);
\end{scope}

\draw[black] (#1) circle (#4);
}

\newsavebox\PPw
\begin{lrbox}{\PPw}
\begin{tikzpicture}

\def\L{5}

\coordinate (A) at (0.5,-1);
\coordinate (B) at (1,0);
\coordinate (C) at (0,0);


\draw[ultra thick, acolor, name path=L1]
($(A)+(-\L,0)$) -- ($(A)+(\L,0)$);

\draw[ultra thick, bcolor, name path=L2]
($(B)+(-60:\L)$) -- ($(B)+(120:\L)$);

\draw[ultra thick, ccolor, name path=L3]
($(C)+(-120:\L)$) -- ($(C)+(60:\L)$);

\path[name intersections={of=L1 and L2, by=I12}];
\path[name intersections={of=L1 and L3, by=I13}];
\path[name intersections={of=L2 and L3, by=I23}];

\halfpoint{I12}{ccolor}{ccolor}{6pt}
\halfpoint{I13}{bcolor}{bcolor}{6pt}
\halfpoint{I23}{acolor}{acolor}{6pt}
\end{tikzpicture}
\end{lrbox}

\newsavebox\fermatstack
\begin{lrbox}{\fermatstack}
\begin{tikzpicture}

\def\L{5}

\coordinate (A) at (0.5,-1);
\coordinate (B) at (1,0);
\coordinate (C) at (0,0);


\draw[ultra thick, acolor, name path=L1]
($(A)+(-\L,0)$) -- ($(A)+(\L,0)$);

\draw[ultra thick, bcolor, name path=L2]
($(B)+(-60:\L)$) -- ($(B)+(120:\L)$);

\draw[ultra thick, ccolor, name path=L3]
($(C)+(-120:\L)$) -- ($(C)+(60:\L)$);

\path[name intersections={of=L1 and L2, by=I12}];
\path[name intersections={of=L1 and L3, by=I13}];
\path[name intersections={of=L2 and L3, by=I23}];

\halfpoint{I12}{ccolor}{ccolor}{6pt}
\halfpoint{I13}{bcolor}{bcolor}{6pt}
\halfpoint{I23}{acolor}{acolor}{6pt}

\draw[thick, name path=curve]
(-4,3) .. controls (3,3) and (-4,-1) .. (4,-3);

\path[name intersections={of=curve and L1, by=P1}];
\path[name intersections={of=curve and L2, by=P2}];
\path[name intersections={of=curve and L3, by=P3}];

\halfpoint{P1}{ccolor}{bcolor}{6pt}
\halfpoint{P2}{ccolor}{acolor}{6pt}
\halfpoint{P3}{acolor}{bcolor}{6pt}
\end{tikzpicture}
\end{lrbox}

\section{The weighted multiplicative action on generalized Fermat equations\\(by Santiago Arango-Pi\~neros, Christopher Keyes and Andrew Kobin)}
\label{sec:appendix}

Let us briefly reintroduce some notation, slightly generalizing the setup of \S \ref{subsec:GFEs}.

\begin{itemize}
    \item $\Rng$ is a Dedekind domain with fraction field $\K$.
    \item $k$ is a field and $k^{\sep}$ its separable closure.
    \item $A,B,C \in \Rng$ with $ABC\neq0$.
    \item $\l , \m , \n \in \ZZ_{\geq 1}$. 
    We let $L \coloneqq \lcm(\ell,m,n)$.
    \item The \emph{weight vector} $\bfw$ corresponding to $(\ell,m,n)$ is given by
    \[\bfw = (w_0, w_1, w_\infty) \coloneqq \left(\frac{L}{\ell}, \frac{L}{m}, \frac{L}{n}\right) \in \Z_{\geq 1}^3.\]  
    Note that $\gcd(w_0, w_1, w_\infty) = 1$.
    \item $\G_m$ is the multiplicative group, viewed as a group scheme over $R$, and $\G_m(\bfw)$ is the scheme theoretic image of $\G_m$ under the morphism  
    \[\Gm \to \Gm^3, \quad t \mapsto (t^{w_0}, t^{w_1}, t^{w_{\infty}}).\]
    \item $\mu_n$ is the $n$-torsion subgroup scheme of $\G_m$, i.e.\ the group scheme of $n$-th roots of unity.
\end{itemize}

\subsection{The weighted action on \texorpdfstring{$\AA^3-\mathbf{0}$}{A3-0}}
\label{sec:weighted-action-affine-space}
The action of $\Gm^3$ on $\AA^3-\mathbf{0}$ by coordinatewise multiplication
induces an action of $\Gm(\bfw)$ on the punctured affine space
$\AA^3-\mathbf{0}$. Since $\gcd(\bfw) = 1$, the map $\Gm \to \Gm(\bfw)$ is an
isomorphism. We let $\Gm$ act on $\AA^3-\mathbf{0}$ through $\Gm(\bfw)$. We compute the stabilizers of this action.

\begin{lemma}
  \label{lemma:stabilizer-data}
  Let $H_0 = V(\x)$, $H_1 = V(\y)$, and $H_{\infty} = V(\z)$
  denote the divisors on $\AA^3-\mathbf{0}$ at which the coordinates vanish. Then
   \begin{align*}
    \Stab_{\Gm}(H_0) &= \mu_{\gcd(w_1,w_{\infty})}, &
    \Stab_{\Gm}(H_0\cap H_1) &= \mu_{w_{\infty}},\\
    \Stab_{\Gm}(H_1) &= \mu_{\gcd(w_0,w_{\infty})},&
    \Stab_{\Gm}(H_0\cap H_\infty) &= \mu_{w_1}, \\
    \Stab_{\Gm}(H_\infty) &= \mu_{\gcd(w_0,w_1)},&
    \Stab_{\Gm}(H_1\cap H_\infty) &= \mu_{w_0}.
  \end{align*}
  Moreover, $\Gm$ acts without stabilizers on the complement of $H_0 \cup H_1 \cup H_\infty$.
\end{lemma}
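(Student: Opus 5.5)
The plan is to compute stabilizers directly from the weighted action. The weighted action is $t\cdot(x,y,z) = (t^{w_0}x, t^{w_1}y, t^{w_\infty}z)$. For a point (or, scheme-theoretically, for $T$-valued points with $T$ an arbitrary $R$-scheme) $P=(x,y,z)$, the stabilizer is the closed subgroup scheme of $t$ satisfying $t^{w_0}x=x$, $t^{w_1}y=y$, $t^{w_\infty}z=z$. On the locus where a coordinate is invertible, its equation becomes $t^{w}=1$; where it vanishes, it imposes nothing.

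We work on the locally closed strata. On the open stratum of $H_0$ where $x=0$ and $y,z$ are units, the stabilizer is $\mu_{w_1}\cap\mu_{w_\infty}$. At the level of group schemes this is $\mu_{\gcd(w_1,w_\infty)}$, because $\mu_a\cap\mu_b=\mu_{\gcd(a,b)}$ as subschemes of $\Gm$. To see this, write $\gcd(a,b)=\alpha a+\beta b$ with B\'ezout coefficients. Then $t^a=t^b=1$ implies $t^{\gcd(a,b)}=1$ in any ring, and the converse is clear. The same argument on $H_1$ and $H_\infty$ gives the other two entries in the left column.

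On $H_0\cap H_1$, which lies inside $\A^3-\mathbf{0}$, we have $z$ a unit, so the stabilizer is $\mu_{w_\infty}$. Similarly $H_0\cap H_\infty$ gives $\mu_{w_1}$ and $H_1\cap H_\infty$ gives $\mu_{w_0}$. On the complement of $H_0\cup H_1\cup H_\infty$ all three coordinates are units, so the stabilizer is $\mu_{w_0}\cap\mu_{w_1}\cap\mu_{w_\infty}=\mu_{\gcd(w_0,w_1,w_\infty)}=\mu_1$, which is trivial since $\gcd(\bfw)=1$.

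The one point needing care is interpretation. ``$\Stab(H_0)$'' should mean the generic stabilizer along the divisor, that is, the stabilizer on the open stratum $H_0\setminus(H_1\cup H_\infty)$. At the deeper points of $H_0$ the stabilizer is larger, and those points are covered by the intersection entries. I would state this convention explicitly. I would also check that the identifications hold as group schemes over $R$, not just on geometric points, since $\mu_n$ may be non-reduced when $n$ is not invertible on $R$. The B\'ezout argument works over any base, so no obstacle arises there. The map $\Gm\to\Gm(\bfw)$ being an isomorphism, noted before the lemma, lets us compute with $t\in\Gm$ directly.
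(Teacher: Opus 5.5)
Your proposal is correct and is essentially the paper's argument: the paper likewise reads off $t^{w_1}=t^{w_\infty}=1$ from the coordinatewise action (testing against the $T$-point $(0,1,1)$ and applying Yoneda), treats the reverse inclusion as clear, and uses $\gcd(\bfw)=1$ on the complement of $H_0\cup H_1\cup H_\infty$, so your B\'ezout step simply makes explicit what the paper leaves implicit. The only difference is that the paper defines $\Stab_{\Gm}(H_0)$ as the subgroup fixing every $T$-point of $H_0$ rather than as a generic stabilizer; the two agree precisely because, as you observe, stabilizers only grow at the deeper points.
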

\begin{proof}
  Let $T$ be any $R$-scheme, and abbreviate $S \coloneqq \Stab_{\Gm}(H_0)$.
  By definition,
  $$
  S(T) = \left\{t \in \OO_T(T)^{\times} : t \cdot P = P \text{ for every } P \in H_0(T)
  \right\}. 
  $$
  In particular, choosing any $t\in S(T)$ and $P = (0, 1 , 1)$ for
  $0,1 \in \OO_T(T)$, the equality $t \cdot P = P$ implies that $t^{w_1} = 1$ and
  $t^{w_{\infty}} = 1$. We conclude that
  $S(T) \subseteq \mu_{\gcd(w_1,w_{\infty})}(T)$. The reverse inclusion is clear. By Yoneda's
  lemma, we conclude the first equality in Lemma~\ref{lemma:stabilizer-data}. The computation of
  the other stabilizers follows similarly.

  For the final statement, note that if a point $P = (x,y,z) \in (\AA^3-\mathbf{0})(T)$ is stabilized by a non-trivial $t \in \Gm(T)$, we have that $t^{w_0}x = x, t^{w_1} y = y$, and $t^{w_{\infty}}z = z$. The fact that $\gcd(\bfw) = 1$ forces $xyz = 0$.
\end{proof}

We consider now the \emph{weighted projective stack}, given by the quotient of $\A^3 - \mathbf{0}$ by this weighted action,
\[\PPP(\bfw) \coloneqq [(\AA^3-\mathbf{0})/\Gm(\bfw)].\] 
Since the weighted projective space $\PP(\bfw)$ is the quotient scheme $(\AA^{3}-\mathbf{0})/\Gm(\bfw)$, it is also the coarse moduli space of $\PPP(\bfw)$. Let
$\pi\colon \PPP(\bfw) \to \PP(\bfw)$ denote the coarse map. One can think of $\pi$ as a map of degree one that is ``ramified'' over the points in $\PP(\bfw)$ for which the action of $\Gm(\bfw)$ on $(\AA^3-\mathbf{0})$ has non-trivial stabilizers.

\begin{lemma}
\label{lem:ramification-data}
  Let $W \subset \PP(\bfw)$ be the complement of the union of the three lines $H_0 = V(\x)$, $H_1 = V(\y)$, $H_\infty = V(\z)$. Then, the coarse map restricted to $\pi^{-1}(W)$ is an isomorphism. Moreover, a geometric point $Q\colon \Spec\bar{k} \to \PPP(\bfw)$ with $\pi(Q) \in H_0(\bar k) \cup H_1(\bar k) \cup H_\infty(\bar k)$ has stabilizer group isomorphic to $\mu_{m(Q)}$, where the positive integer $m(Q)$ is determined by the location of $\pi(Q)$, as in Lemma~\ref{lemma:stabilizer-data}.
\end{lemma}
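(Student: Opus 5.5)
The plan is to prove both assertions of Lemma~\ref{lem:ramification-data} by working étale-locally on the base and reducing to the stabilizer computations of Lemma~\ref{lemma:stabilizer-data}. The key input is the standard fact about quotient stacks by $\Gm$ acting with finite stabilizers. The inertia of $[(\AA^3-\mathbf{0})/\Gm(\bfw)]$ at a geometric point $Q$, represented by a point $P \in (\AA^3-\mathbf{0})(\bar k)$, is the stabilizer $\Stab_{\Gm}(P)$. The coarse map $\pi$ is an isomorphism over any open substack on which the action is free.

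\textbf{Isomorphism over $W$.} Let $U = (\AA^3-\mathbf{0}) \setminus (H_0\cup H_1\cup H_\infty)$, which is $\Gm^3$ and is $\Gm(\bfw)$-stable. By the last statement of Lemma~\ref{lemma:stabilizer-data}, $\Gm$ acts on $U$ with trivial stabilizers. I would check this scheme-theoretically as well, not only on geometric points. For $t$ with $t^{w_0}x=x$, $t^{w_1}y=y$, $t^{w_\infty}z=z$ and $x,y,z$ units, we get $t^{w_0}=t^{w_1}=t^{w_\infty}=1$. Since $\gcd(\bfw)=1$, this forces $t=1$.

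The action on $U$ is also proper. Concretely, $\Gm \to \Gm^3$, $t\mapsto (t^{w_i})$, is a closed immersion because $\gcd(\bfw)=1$ makes it a monomorphism of tori with closed image. Hence $\Gm$ acts on $U\cong \Gm^3$ by translation through a closed subgroup, which is free and proper. Therefore $[U/\Gm]$ is an algebraic space and in fact the scheme $U/\Gm \cong \Gm^3/\Gm(\bfw)$, a torus. Since $\pi^{-1}(W)=[U/\Gm]$ and $W = U/\Gm$ is the corresponding open of $\PP(\bfw)$, the restriction of $\pi$ is an isomorphism. Here I use that forming the coarse space commutes with flat (in particular open) base change on the coarse space, which is the Keel--Mori property.

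\textbf{Stabilizers on the boundary.} Let $Q$ be a geometric point with $\pi(Q)$ on one of the lines, represented by $P=(x,y,z)$ with exactly one or exactly two coordinates zero. I would compute $\Stab_{\Gm}(P)$ directly, as in the proof of Lemma~\ref{lemma:stabilizer-data}. If only $x=0$, the stabilizer consists of $t$ with $t^{w_1}=t^{w_\infty}=1$, namely $\mu_{\gcd(w_1,w_\infty)}$. If $x=y=0$, the condition is $t^{w_\infty}=1$, giving $\mu_{w_\infty}$. The remaining cases are symmetric. The automorphism group of $Q$ in the quotient stack is this stabilizer, so $m(Q)$ is as listed.

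The main obstacle is making the first step rigorous at the level of the base $R$ rather than over a field. Specifically, one must check that $U/\Gm(\bfw)$ agrees with the open subscheme $W$ of $\Proj$. I would handle this by covering $U$ with the invariant charts $D_+(xyz)$ and comparing the degree-zero part of $R[x,y,z]_{xyz}$ directly with the invariants of the free quotient.
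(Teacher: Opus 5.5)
Your proposal is correct and follows essentially the paper's route. The paper likewise observes that $U \to W$ is a $\Gm(\bfw)$-torsor, so that $\pi^{-1}(W) \cong [U/\Gm(\bfw)] \cong W$. It then identifies the automorphism group of a geometric point with the stabilizer of a representative $q$, using the trivialized torsor, just as you do by citing the inertia-equals-stabilizer fact. Where you differ is only in detail: you justify the torsor claim (free and proper action through a closed subtorus, plus the $D_+(xyz)$ comparison) and invoke flat base change of coarse spaces, whereas the paper simply asserts it.
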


\begin{proof}
  Let $U\subset \AA^3-\mathbf{0}$ be the complement of the three lines. The map $U\to W$ induced by the projection $(\AA^3 -\mathbf{0}) \to \PP(\bfw)$ is a $\Gm(\bfw)$-torsor, and so $[U/\Gm(\bfw)] \cong W$. On the other hand, $\pi^{-1}(W) = W \times_{\PP(\bfw)}\PPP(\bfw) \cong [U/\Gm(\bfw)]$. This proves the first statement.

  For the second statement, recall that $Q\colon \Spec \bar k \to \PPP(\bfw)$ is the data of a $\Gm(\bfw)_{\bar k}$-torsor $P \to \Spec \bar k$, together with a $\Gm(\bfw)$-equivariant map $\phi\colon P \to (\AA^3-\mathbf{0})$. Since $\bar k$ is algebraically closed, $P \to \Spec \bar k$ is isomorphic to the trivial torsor $\Gm(\bfw)_{\bar k} \to \Spec \bar k$. Since a $\Gm(\bfw)$-equivariant map $\Gm(\bfw)_{\bar k} \to (\AA^3-\mathbf{0})$ is determined by a point $q \in (\AA^3-\mathbf{0})(\bar k)$, the automorphism group of the diagram
  \[
    \begin{tikzcd}
        \Gm(\bfw)_{\bar k} \arrow[r, "\phi"] \arrow[d] & (\AA^3-\mathbf{0}) \\
        \Spec \bar k &
    \end{tikzcd}
  \]
  coincides with the subgroup of $\Gm(\bfw)(\bar k)$ stabilizing $q$. 
\end{proof}

\subsection{The weighted multiplicative action on the punctured cone}
\label{sec:weighted-action-cone}

Let $\RR \coloneqq R[x,y,z]/(\gfe)$ with the grading given by $\bfw$ and let $\RR_+$ denote its irrelevant ideal. Take $\UU \subset \A^3 - \mathbf{0}$ to be
\[\UU \coloneq \Spec \RR - V(\RR_+).\]
The action of $\Gm(\bfw)$ on $\AA^3 - \mathbf{0}$ considered in
Section~\ref{sec:weighted-action-affine-space} descends to an action on $\UU$. Indeed,
\begin{equation*}
    A(t^{w_0}x)^\l + B(t^{w_1}y)^\m + C(t^{w_\infty}z)^\n = \lambda(\gfe),
\end{equation*}
for $\lambda = t^{\l w_0} = t^{\m w_1} = t^{\n w_\infty}$. Therefore, the quotient stack $\X \coloneqq [\UU/\Gm(\bfw)]$ is a closed substack of $\PPP(\bfw)$, whose coarse moduli space is precisely the quotient scheme $X \coloneqq \UU/\Gm(\bfw) = \Proj \RR$. Let $\pi\colon \X \to X$ denote the coarse map.

\begin{lemma}
\label{lem:stackyness-fermat-stack}
  Let $W \subset \PP(\bfw)$ be the complement of the union of the three lines $H_0 = V(\x)$, $H_1 = V(\y)$, $H_\infty = V(\z)$. Then, the coarse map restricted to $\pi^{-1}(W\cap X)$ is
  representable by an isomorphism. Moreover, a geometric point $Q\colon \Spec \bar k \to \X$ with $\pi(Q) \in H_0(\bar k) \cup H_1(\bar k) \cup H_\infty(\bar k)$ satisfies $\pi(Q) \not\in \{[0:0:1], [0:1:0], [1:0:0]\}$ and has stabilizer group isomorphic to $\mu_{m(Q)}$, where 
  \[m(Q) = \begin{cases}
      \gcd(w_1,w_\infty), & \text{ if } \pi(Q) \in H_0(\bar k), \\
      \gcd(w_0,w_\infty), & \text{ if } \pi(Q) \in H_1(\bar k), \\
      \gcd(w_0,w_1), & \text{ if } \pi(Q) \in H_\infty(\bar k).
  \end{cases}\]
\end{lemma}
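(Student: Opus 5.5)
The plan is to deduce the statement from Lemma~\ref{lem:ramification-data} by exploiting the closed immersion $\X \hookrightarrow \PPP(\bfw)$ and its compatibility with coarse spaces. Since $\X = [\UU/\Gm(\bfw)]$ with $\UU \subset \A^3 - \mathbf{0}$ a closed, $\Gm(\bfw)$-stable subscheme, the square formed by $\X \to \PPP(\bfw)$, the two coarse maps, and $X \to \PP(\bfw)$ commutes. Moreover, the stabilizer of a geometric point of $\X$ agrees with the stabilizer of its image in $\PPP(\bfw)$. I would justify this by repeating the torsor argument from the proof of Lemma~\ref{lem:ramification-data}: over $\bar k$ the torsor is trivial, so the automorphism group of a point $Q$ is the stabilizer in $\Gm(\bfw)(\bar k)$ of a lift $q \in \UU(\bar k) \subset (\A^3-\mathbf{0})(\bar k)$. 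That stabilizer depends only on $q$ as a point of $\A^3 - \mathbf{0}$.

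For the first assertion, let $U \subset \A^3 - \mathbf{0}$ be the complement of $xyz=0$ and set $U' = U \cap \UU$. By Lemma~\ref{lemma:stabilizer-data} the action on $U$ is free, and $U \to W$ is a $\Gm(\bfw)$-torsor. Base changing along the closed immersion $X \cap W \hookrightarrow W$ shows that $U' \to X\cap W$ is again a torsor. Hence $\pi^{-1}(X \cap W) \cong [U'/\Gm(\bfw)] \cong X \cap W$, exactly as in the previous proof. The point to check here is that the quotient of $U'$ is the scheme-theoretic intersection $X \cap W$, which follows from the identifications $X = \Proj \RR$ and $U' = U \times_{\A^3-\mathbf{0}} \UU$.

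For the second assertion, I first show that the coordinate points do not lie on $X$. Take a geometric point $q = (x,y,z)$ of $\UU$ with two coordinates zero, say $y=z=0$. The equation then gives $Ax^\ell = 0$. Working geometrically over a field where $A \neq 0$ (implicitly a fiber over a point of $\Spec R$ where $ABC$ is invertible, which I would state as the needed hypothesis), this forces $x=0$, contradicting $q \neq \mathbf{0}$. The same argument applies to the other two pairs. So any $q$ with $xyz=0$ has exactly one vanishing coordinate. By Lemma~\ref{lemma:stabilizer-data} its stabilizer is then $\mu_{\gcd(w_1,w_\infty)}$, $\mu_{\gcd(w_0,w_\infty)}$ or $\mu_{\gcd(w_0,w_1)}$ according to whether $x$, $y$ or $z$ vanishes.

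The main obstacle is the careful stabilizer computation for a single point rather than the whole divisor. Lemma~\ref{lemma:stabilizer-data} computes the stabilizer of all of $H_0$, whereas I need the stabilizer of one point $(0,y,z)$ with $yz \ne 0$. This is easy directly: $t^{w_1}y = y$ and $t^{w_\infty}z = z$ hold if and only if $t \in \mu_{w_1}\cap\mu_{w_\infty} = \mu_{\gcd(w_1,w_\infty)}$, and I would just write it out. A secondary subtlety is the fibers where $p \mid ABC$; I would restrict to geometric points over $k$ with $ABC \neq 0$ in $k$, as the statement implicitly requires.
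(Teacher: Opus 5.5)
Your proposal is correct and follows the paper's proof. The paper simply says the argument is analogous to Lemma~\ref{lem:ramification-data} (the torsor argument on the locus $xyz \neq 0$), and that the equation $Ax^\ell + By^m + Cz^n = 0$ prevents a point from lying on two of the lines. Your extra steps fill in details the paper leaves implicit: you compute the stabilizer of an individual point $(0,y,z)$ with $yz \neq 0$ directly, and you state explicitly that one needs $ABC \neq 0$ in $\bar k$, since over primes dividing $A$ the point $[1:0:0]$ does lie on the fiber.
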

\begin{proof}
    The proof is analogous to that of Lemma~\ref{lem:ramification-data}. The point $\pi(Q)=[x:y:z]$ satisfies the equation $Ax^\l + By^\m + Cz^\n = 0$, and thus cannot be in the intersection of two lines.
\end{proof}

\begin{figure}[htbp]
  \centering
  \begin{subfigure}[b]{0.45\textwidth}
    \centering
    \resizebox{6cm}{!}{
    \begin{tikzpicture}
        \node at (0,0) {\usebox\PPw};
    \end{tikzpicture}
    }
    \caption{A geometric fiber of $\PPP(\bfw)$.}
    \label{fig:geometric-fiber-WPP}
  \end{subfigure}
  \hfill 
  \begin{subfigure}[b]{0.45\textwidth}
    \centering
    \resizebox{6cm}{!}{
    \begin{tikzpicture}
        \node at (0,0) {\usebox\fermatstack};
    \end{tikzpicture}
    }
    \caption{A geometric fiber of $\X \subset \PPP(\bfw)$.}
    \label{fig:geometric-fiber-fermat-stack}
  \end{subfigure}
\end{figure}

\subsection{Genus formulas}
\label{sec:genus-formulas}

Let $\X$ and $\Y$ be stacky curves over a field $\k$. Following \cite{vzb}, recall that the Euler characteristic $\chi(\X)$ of $\X$ is defined to be $-\deg \canonicaldiv_\X$, where $\canonicaldiv_\X$ is a canonical divisor on $\X$ \cite[Definition 5.5.8]{vzb}. 
We use $\ptcrs{\ptX}$ to denote the image of a closed point $P$ of $\X$ in its coarse space; if $\f \colon \Y \to \X$ is a representable surjective $k$-morphism of stacks, we use $\ptcrs{\f}\colon Y \to X$ to denote the induced morphism on coarse spaces. If $k' \supset \k$ is a field extension, and $\xi_P\colon \Spec k' \to \X$ is an $k'$-point in the equivalence class of $P$, we use $\Stab(\ptX)$ to denote the stabilizer $k'$-group scheme at $P$. Recall that when $P$ is a {\it tame} point, i.e.~$\operatorname{char}k$ does not divide the order of $\Stab(\ptX)$, one defines $\deg(P) \coloneqq \deg(|P|)/\#\Stab(P)(\k^{\sep})$ (this coincides with the degree of the residue gerbe at $P$ in the sense of \cite[Definition 5.1.3, Remark 5.2.3]{vzb}). 
We have the commutative diagram

\begin{equation}
    \label{diagram:morphisms-between-stacky-curves}
    \begin{tikzcd}
        Q \arrow[d, mapsto] & \Y \arrow[rr, "f"] \arrow[d] & & \X \arrow[d] & f(Q) \arrow[d, mapsto]  \\
        \ptcrs{Q} & Y \arrow[rr, "\ptcrs{f}"'] & & X & \ptcrs{f(Q)} 
    \end{tikzcd}
\end{equation}
For every tame point $Q$ in $\Y$, define
\begin{equation}
    e_f(\ptY) \coloneqq \frac{\# \Stab(\ptY) \cdot e_{\ptcrs{f}}(\ptcrs{\ptY})}{\# \Stab(f(\ptY))}.
\end{equation}
When $\X$ and $\Y$ are tame stacky curves, i.e.~all points on each curve are tame, we have the following version of a Riemann--Hurwitz formula. 

\begin{proposition}
\label{prop:relativeRH}
    Let $\f \colon \Y \to \X$ be a morphism of tame stacky curves over $\k$, such that the induced map on coarse spaces $\ptcrs{f}\colon Y \to X$ is finite and separable. Then
    \begin{equation}\label{eq:relativeRH}
    \chi(\Y) = \deg \ptcrs{f} \cdot \chi(\X) - \sum_{\ptY} \left(e_f(\ptY) - 1\right)\deg \ptY,
    \end{equation}
    where the sum is ranging over closed points $Q$ in $\Y$.
\end{proposition}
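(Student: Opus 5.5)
The plan is to reduce to the classical Riemann--Hurwitz formula for the coarse map $|f|\colon Y\to X$, using the stacky Riemann--Hurwitz description of canonical divisors on tame stacky curves. Recall (Voight--Zureick-Brown) that for a tame stacky curve $\X$ with coarse map $\pi\colon\X\to X$, one has
\[
\canonicaldiv_\X = \pi^*\canonicaldiv_X + \sum_{P} \left(\#\Stab(P)-1\right) P ,
\]
the sum running over closed points with nontrivial stabilizer. Taking degrees, with $\deg P = \deg|P|/\#\Stab(P)$, gives
\[
\chi(\X) = \chi(X) - \sum_P \left(1 - \frac{1}{\#\Stab(P)}\right)\deg|P| .
\]
I will use this formula for both $\X$ and $\Y$.

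First, apply classical Riemann--Hurwitz to $|f|$, which is finite and separable:
\[
\chi(Y) = \deg|f|\,\chi(X) - \sum_{|Q|} \left(e_{|f|}(|Q|)-1\right)\deg|Q| .
\]
Tameness of the stacks is not enough to guarantee tame ramification of $|f|$. If wild ramification occurs, the term $e-1$ should be replaced by the different exponent, and the definition of $e_f$ is meant to cover the tame case. I would state the tame assumption explicitly, or note that the formula is interpreted with the different.

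Second, substitute both stacky Euler characteristic formulas into the desired identity. Group the resulting sums over points of $X$ by fibers of $|f|$, using $\sum_{|Q|\mapsto|P|} e_{|f|}(|Q|)\deg|Q| = \deg|f|\deg|P|$. It then suffices to check, fiber by fiber, that the correction terms agree:
\[
\sum_{Q\mapsto P}\left[(e_{|f|}-1)\deg|Q| + \left(1-\tfrac{1}{s_Q}\right)\deg|Q|\right] - \deg|f|\left(1-\tfrac1{s_P}\right)\deg|P| = \sum_{Q\mapsto P}\left(\tfrac{s_Q e_{|f|}}{s_P}-1\right)\tfrac{\deg|Q|}{s_Q},
\]
where $s_Q=\#\Stab(Q)$ and $s_P = \#\Stab(f(Q))$. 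Expanding the right-hand side gives $\sum e_{|f|}\deg|Q|/s_P - \sum\deg|Q|/s_Q$. The left-hand side equals $\sum e_{|f|}\deg|Q| - \sum \deg|Q|/s_Q - \deg|f|\deg|P| + \deg|f|\deg|P|/s_P$. Applying the fiber identity, both sides reduce to $\deg|f|\deg|P|/s_P - \sum\deg|Q|/s_Q$, so they are equal. Summing over $P$ then yields \eqref{eq:relativeRH}.

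The main obstacle is justifying the canonical divisor formula $\canonicaldiv_\X=\pi^*\canonicaldiv_X+\sum(s_P-1)P$ for tame stacky curves over a non-algebraically-closed $\k$, together with the degree conventions for residue gerbes. I would cite \cite{vzb} for this, specifically the stacky Riemann--Hurwitz result for the coarse map there, rather than reprove it. A second point to check is that $e_f(Q)$ is an integer. This follows from representability of $f$, which makes $\Stab(Q)\hookrightarrow\Stab(f(Q))$ injective, combined with a local computation on cyclic covers. Integrality is not actually needed for the degree identity, however.
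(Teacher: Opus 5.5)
Your proposal is correct and is essentially the paper's proof: both combine the Voight--Zureick-Brown formula $\chi(\Y)=\chi(Y)-\sum_Q(\#\Stab(Q)-1)\deg Q$ (and its analogue for $\X$) with classical Riemann--Hurwitz for $|f|$, then rearrange using $\sum_{f(Q)=P}e_{|f|}(|Q|)\deg|Q|=\deg|f|\cdot\deg|P|$; your fiber-by-fiber check is just a localized version of the paper's global rearrangement. Your caveat that the $e-1$ form of Riemann--Hurwitz needs $|f|$ to be tamely ramified is a fair point that the paper's statement leaves implicit (it holds in the application to Corollary~\ref{cor:GFE-euler-char}), while your remark on representability is unnecessary, since the proposition does not assume it and integrality of $e_f$ plays no role.
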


\begin{proof}
    The statement follows from combining \cite[Proposition 5.5.6, see also (5.5.10)]{vzb} for $\X$ and $\Y$ together with the usual Riemann--Hurwitz formula for $\ptcrs{\f}$, as follows.
    \begin{align*}
        \chi(\Y) &= \chi(Y) - \sum_\ptY \left( \# \Stab(\ptY) - 1 \right) \deg \ptY\\
        \chi(Y) &= \deg \ptcrs{f} \cdot \chi(X) - \sum_{\ptcrs{\ptY}} (e_{\ptcrs{f}}(\ptcrs{\ptY}) - 1)\deg \ptcrs{\ptY}\\
        \chi(\X) &= \chi(X) - \sum_\ptX \left( \# \Stab(\ptX) - 1 \right) \deg \ptX
    \end{align*}
    Rearranging and recalling $\sum_{f(\ptY) = \ptX} e_{\ptcrs{f}}(\ptcrs{\ptY})\deg \ptcrs{\ptY} = \deg \ptcrs{f} \cdot \deg \ptcrs{\ptX}$, we have
    \begin{align*}
        \chi(\Y) &= \deg \ptcrs{f}\cdot \chi(\X) + \deg\ptcrs{f} \cdot \sum_{\ptX} \left(1 - \frac1{\#\Stab(\ptX)}\right) \deg \ptcrs{\ptX}\\
        &\hspace{1cm}- \sum_\ptY \left(\#\Stab(\ptY) - 1\right) \deg \ptY - \sum_{\ptcrs{\ptY}} \left(e_{\ptcrs{f}}(\ptcrs{\ptY}) - 1\right) \deg \ptcrs{\ptY}\\
        &= \deg \ptcrs{f}\cdot \chi(\X) + \sum_{\ptY} \left(1 - \frac1{\#\Stab(f(\ptY))}\right) e_{\ptcrs{f}}(\ptcrs{\ptY})\deg \ptcrs{\ptY}\\
        &\hspace{1cm}- \sum_\ptY \left(\#\Stab(\ptY) - 1\right) \deg \ptY - \sum_{\ptcrs{\ptY}} \left(e_{\ptcrs{f}}(\ptcrs{\ptY}) - 1\right) \deg \ptcrs{\ptY}\\
        &= \deg \ptcrs{f}\cdot \chi(\X) - \sum_{\ptY} \left( \frac{\# \Stab(\ptY) \cdot e_{\ptcrs{f}}(\ptcrs{\ptY})}{\#\Stab(f(\ptY))} - 1 \right) \deg \ptY.
    \end{align*}
\end{proof}

Using Proposition~\ref{prop:relativeRH}, we can deduce the Euler characteristic of the stacky curve attached to any generalized Fermat equation, as well as that of its coarse moduli space. The formula for the coarse space appears in the literature, e.g.~in \cite[Corollary 3.5]{OW_1972}.

\begin{corollary}
\label{cor:GFE-euler-char}
    Fix positive integers $\ex,\ey$ and $\ez$ coprime to the characteristic of $K$. Set $\L = \lcm(\ex,\ey,\ez)$ and let $\X \colon \gfe = 0 \subset \PPP(\L/\ex,\L/\ey,\L/\ez)$. Then we have
    \begin{equation}\label{eq:GFE-euler-char}
    \chi(\X) = -\frac{\ex\ey\ez}{\L} + \frac{\gcd(\ex,\ey)}{\gcd(\L/\ex,\L/\ey)} + \frac{\gcd(\ex,\ez)}{\gcd(\L/\ex,\L/\ez)} + \frac{\gcd(\ey,\ez)}{\gcd(\L/\ey,\L/\ez)}.
    \end{equation}
    Denoting the coarse space of $\X$ by $\Xcrs$, we have
    \begin{equation}\label{eq:GFE-coarse-euler-char}
    \chi(X) = -\frac{\ex\ey\ez}{\L} + \gcd(\ex,\ey) + \gcd(\ex,\ez) + \gcd(\ey,\ez).
    \end{equation}
\end{corollary}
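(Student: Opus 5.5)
Both formulas will come from Proposition~\ref{prop:relativeRH}, using the explicit stacky-point data of Lemma~\ref{lem:stackyness-fermat-stack}. The most direct route is the stacky Riemann--Hurwitz relation between $\X$ and its coarse space $X$ (the first identity in the proof of Proposition~\ref{prop:relativeRH}):
\[\chi(\X) = \chi(X) - \sum_{P}\left(\#\Stab(P)-1\right)\deg P = \chi(X) - \sum_{|P|}\left(1-\frac1{\#\Stab(P)}\right)\deg|P|.\]
This reduces \eqref{eq:GFE-euler-char} to \eqref{eq:GFE-coarse-euler-char} plus a count of stacky points. We may work over $\bar k$, since Euler characteristics are invariant under base change.

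To compute $\chi(X)$, I will use the cyclic cover $F\colon Y \to X$, where $Y$ is the smooth plane curve $Au^{L}+Bv^{L}+Cw^{L}=0$ of degree $L$. The map is $(u,v,w)\mapsto (u^{L/\ell},v^{L/m},w^{L/n})$, which lands in $\PPP(\bfw)$ because $x=u^{w_0}$ has weight $w_0$, and similarly for $y,z$. On coarse spaces this is a Galois cover by $\mu_{w_0}\times\mu_{w_1}\times\mu_{w_\infty}$ modulo the diagonal scaling that acts trivially. I will compute its degree by comparing with the torus: the map $\G_m^3/\G_m \to \G_m^3/\G_m(\bfw)$ has degree $w_0w_1w_\infty$. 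Ramification occurs only over the three lines. Over $H_0\cap X$ the coarse curve has $\gcd(m,n)$ points, namely the solutions of $By^{g_{mn}}+Cz^{g_{mn}}=0$ after the reduction in Lemma~\ref{lem:p_divides_none}. Above these lie the $L$ points of $Y\cap\{u=0\}$, each with equal ramification index, so $e = \deg F\cdot g_{mn}/L$. Plugging into Riemann--Hurwitz with $\chi(Y)=3L-L^2$ should give, after simplification using $\ell w_0 = L$ and similar identities, $\chi(X) = -\ell m n/L + g_{\ell m}+g_{\ell n}+g_{mn}$.

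To pass to $\chi(\X)$, I use Lemma~\ref{lem:stackyness-fermat-stack}. Over $H_0$ the stacky points have stabilizer $\mu_{\gcd(w_1,w_\infty)}$ and there are $g_{mn}$ of them on the coarse space. So the correction is $g_{mn}\left(1-1/\gcd(w_1,w_\infty)\right)$, and similarly for the other two lines. Subtracting the three corrections gives exactly \eqref{eq:GFE-euler-char}.

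\textbf{Main obstacle.} The hard step is the bookkeeping in the coarse computation: the number of points of $X$ on each line, the degree of $|F|$, and the ramification indices. The coarse space may also be non-well-formed, which I will handle as in the footnote to Lemma~\ref{lem:p_divides_none}. I expect the cleanest check to be a direct count of points of $X$ with $x=0$ up to weighted scaling. The stabilizer $\mu_{\gcd(w_1,w_\infty)}$ identifies the naive solutions: $n/g_{mn}$ of them collapse to one point, leaving $g_{mn}$ points. I will verify this consistency before simplifying.
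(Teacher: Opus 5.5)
Your argument is correct. It is the alternative route the paper only mentions in the remark following the corollary, not the one it carries out.

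\textbf{Comparison with the paper.} The paper applies Proposition~\ref{prop:relativeRH} directly to $f\colon Y\to\X$. There the stabilizers enter through the local terms $e_f(Q)=e_{|f|}(|Q|)/\#\Stab(f(Q))$, which work out to $w_0$ at the $L$ points of $Y$ on $\{u=0\}$, and similarly $w_1,w_\infty$ on the other lines. This gives \eqref{eq:GFE-euler-char} first, and \eqref{eq:GFE-coarse-euler-char} is then deduced from the coarse map. You instead apply classical Riemann--Hurwitz to the Galois cover $|F|\colon Y\to X$ and then subtract the stacky corrections.

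The inputs are the same in both: the $\gmn$ coarse points on $H_0$, the $L$ points of $Y$ above them, and the coarse index $\deg|F|\cdot\gmn/L=w_0\gcd(w_1,w_\infty)$. In fact Proposition~\ref{prop:relativeRH} is proved by combining exactly your two steps. Your ordering makes the computation slightly cleaner, since $L\,e/\deg|F|=\gmn$ yields \eqref{eq:GFE-coarse-euler-char} with no gcd/lcm manipulation. It also needs only classical Riemann--Hurwitz plus the comparison $\chi(\X)=\chi(X)-\sum\left(1-1/\#\Stab\right)\deg|P|$. The paper's ordering instead showcases the stacky Riemann--Hurwitz formula it has just proved.

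\textbf{Small bookkeeping remarks.}
\begin{enumerate}[label=(\arabic*)]
\item The group $\mu_{w_0}\times\mu_{w_1}\times\mu_{w_\infty}$ meets the diagonal in $\mu_{\gcd(w_0,w_1,w_\infty)}=1$. So there is nothing to quotient by, which is consistent with your degree $w_0w_1w_\infty$.
\item The equality of ramification indices at all $L$ points holds because each $[0:v:w]$ has the same stabilizer $\mu_{w_0}\times\mu_{\gcd(w_1,w_\infty)}$ in this group. You assert this equality but should justify it this way.
\item In your consistency check, normalize $y=1$. The $n/\gmn$ roots of $z^n=-B/C$ that collapse to one point are identified by the residual group $\mu_{w_1}$ acting via $\zeta\mapsto\zeta^{w_\infty}$, whose image has order $w_1/\gcd(w_1,w_\infty)=n/\gmn$. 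The group $\mu_{\gcd(w_1,w_\infty)}$ is the kernel of this action, i.e.\ the stabilizer, not what does the collapsing. Your count of $\gmn$ points is nonetheless correct.
\end{enumerate}
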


\begin{proof}
    The formulae \eqref{eq:GFE-euler-char} and \eqref{eq:GFE-coarse-euler-char} are equivalent by an application of Proposition \ref{prop:relativeRH} for the coarse moduli map $\X \to X$.
    We now derive the former.

    Let $Y\subset\PP^2$ be the plane curve defined by $Ax^{L} + By^{L} + Cz^{L} = 0$ and consider the morphism of stacky curves 
    \[\f \colon Y \longrightarrow \X\]
    induced by $(x,y,z) \mapsto (x^{L/\ex},y^{L/\ey},z^{L/\ez})$ on the affine cones. Then $\f$ satisfies the conditions of Proposition~\ref{prop:relativeRH} with $\chi(Y) = 3L - L^2$, $\deg \ptcrs{\f} = L^3/\ex\ey\ez$.
    Any geometric point $\ptY \in Y(K^{\sep})$ has degree one and no stabilizers, because $Y$ is a scheme.
    The order of the stabilizer $\Stab(f(\ptY))$ is given by Lemma \ref{lemma:stabilizer-data}. Using the fact that for any point $\ptX \in \X(K^{\sep})$ we have $\sum_{\f(\ptY) = \ptX} e_{\ptcrs{\ptY}} = \deg \ptcrs{\f} = L^3/\ex\ey\ez$, we apply \eqref{eq:relativeRH} to compute 
    \begin{align}
        \nonumber 3L - L^2 = \frac{L^3}{\ex\ey\ez} &\chi(\X) + L\left(\frac{L^2\gcd(\ey,\ez)}{\ex\ey\ez \lcm(L/\ey,L/\ez)} - 1 \right) \\
        &+ L\left(\frac{L^2\gcd(\ex,\ez)}{\ex\ey\ez \lcm(L/\ex,L/\ez)} - 1 \right) + L\left(\frac{L^2\gcd(\ex,\ey)}{\ex\ey\ez \lcm(L/\ex,L/\ey)} - 1 \right).
    \end{align}
    Rearranging, we obtain \eqref{eq:GFE-euler-char}.
\end{proof}

\begin{remark}
    In the proof of Corollary~\ref{cor:GFE-euler-char} above, we compute the local contributions to $\chi(\X)$ using the explicit group theory in Lemma~\ref{lemma:stabilizer-data}. Alternatively, one can compute the ramification indices in the cover of coarse spaces $Y\to X$ and obtain $\chi(X)$ first, then use \cite[Proposition 5.5.6]{vzb} in the other direction to deduce $\chi(\X)$. 
\end{remark}

Before presenting some applications of Corollary \ref{cor:GFE-euler-char} to specific families of generalized Fermat equations, we exhibit Proposition \ref{prop:relativeRH} for morphism of stacky curves which is not representable.

\begin{example}
    Assume that $k$ is algebraically closed of characteristic zero. Consider the action of the $k$-algebraic group $\mu_n$ on the projective line $\PP^1_k$ given on points by \[\zeta \cdot [s:t] \coloneqq [\zeta s : t].\]
    This action has two fixed points: the point $P_0 \coloneqq [0:1]$ and $P_\infty = [1:0]$, with $\Stab_{\mu_n}(P_0) = \Stab_{\mu_n}(P_\infty) = \mu_n(k)$. The stack quotient $[\PP^1/\mu_n]$ is therefore a stacky curve of Euler characteristic $\frac{2}{n}$.

    Now, let $\varphi\colon \mu_4 \to \mu_2$ be the $k$-homomorphism given on points by $\zeta \mapsto \zeta^2$. Importantly, $\ker\varphi$ is not trivial. Let $|f|\colon \PP^1 \to \PP^1$ be the squaring map $[s:t] \mapsto [s^2:t^2]$. Note that $|f|$ is $\varphi$-equivariant:
    \[|f|(\zeta \cdot P) = [s^2\zeta^2:t^2] = \zeta^2 \cdot [s^2:t^2] = \varphi(\zeta) \cdot |f|(P).\]
    Therefore, we get an induced morphism $f\colon [\PP^1/\mu_4] \to [\PP^1/\mu_2]$. This map restricts at the origin to the morphism $B\mu_4 \to B\mu_2$, which is \emph{not representable:} the pullback via the map $\Spec k \to B\mu_2$ given by the trivial $\mu_2$-torsor coincides with $[\mu_2/\mu_4]$ (here $\mu_4$ acts on $\mu_2$ via $\varphi$). Since $\ker\varphi$ is not trivial, $[\mu_2/\mu_4]$ is not an algebraic space. This implies that $f$ is not representable.

    Note that 
    \begin{align*}
        \chi(\Y) &= \chi([\PP^1/\mu_4]) = \tfrac12 \\
        \chi(\X) &= \chi([\PP^1/\mu_2]) = 1 \\
        \deg \ptcrs{f} &= 2 \\
        e_f(Q) &=
        \begin{cases}
            1, & \text{ if } Q \notin \left\{ [P_0/\mu_4], [P_\infty/\mu_4]\right\},\\
            2, & \text{ if } Q \in \left\{ [P_0/\mu_4], [P_\infty/\mu_4]\right\},
        \end{cases} \\
        \deg Q &= \frac{\deg|Q|}{\#\Stab(Q)} =
        \begin{cases}
         1, & \text{ if } Q \notin \left\{ [P_0/\mu_4], [P_\infty/\mu_4]\right\},\\
            \tfrac14, & \text{ if } Q \in \left\{ [P_0/\mu_4], [P_\infty/\mu_4]\right\}.
        \end{cases}
    \end{align*}
    This is consistent with \eqref{eq:relativeRH}. 
\end{example}

\subsection{Examples}
\label{sec:appendix-examples}

\begin{example}\label{ex:pairwisecoprime}
    Suppose that the exponents are pairwise coprime: \[\gcd(\ex,\ey) = \gcd(\ex,\ez) = \gcd(\ey,\ez) = 1.\] In this case, we have $L = \lcm(\ex,\ey,\ez) = \ex\ey\ez$ and 
    \[
        w_0 = \ey\ez,\quad
        w_1 = \ex\ez,\quad
        w_\infty = \ex\ey.        
    \]
    The stacky curve $\X$ has exactly three stacky points, one each with $x=0$, $y=0$, and $z=0$, with stabilizers $\mu_\ex$, $\mu_\ey$, and $\mu_\ez$, respectively.
    
    By \eqref{eq:GFE-euler-char} we have 
    \[\chi(\X) = -1 + \frac1\ex + \frac1\ey + \frac1\ez.\]
    The coarse space is a genus zero curve by \eqref{eq:GFE-coarse-euler-char}.
\end{example}

\begin{example}\label{ex:pairwisecoprimeweights}
    Suppose that the weights are pairwise coprime: 
    \[\gcd(L/\ex,L/\ey) = \gcd(L/\ex,L/\ez) = \gcd(L/\ey,L/\ez) = 1.\] 
    Then Corollary~\ref{cor:GFE-euler-char} shows that the coarse moduli map $\pi \colon \X\to X$ is an isomorphism; in particular, $\X\subset\PPP(L/\ex,L/\ey,L/\ez)$ is a scheme in this case. This is precisely the case that the weighted projective space $\PP(L/\ex,L/\ey,L/\ez)$ is \textit{well-formed}; see e.g.\ \cite[\S 5]{Iano-Fletcher_2000}.
\end{example}

\begin{example}\label{ex:coarseplanecurve}
    In \cite[Example 5.3.7]{vzb} and \cite[Lemmas 3.2.1.e and 3.2.2.b.ii]{SantiThesis}, it is claimed that for a generalized Fermat stacky curve $\X$, the coarse space is isomorphic to the plane curve $X_{g} \colon Ax^{g} + By^{g} + Cz^{g} = 0$, where $g = \gcd(\ell,m,n)$. This conclusion is incorrect in general. To see why, one can use \eqref{eq:GFE-coarse-euler-char} to compute the Euler characteristic of the coarse space of $\X$ and observe that it may disagree with that of a degree $g$ plane curve. For instance, $(\ex,\ey,\ez) = (6,10,15)$ provides a concrete counterexample (see Example \eqref{ex:pairwisecoprimeweights} above). Note that upon replacing $X_{g}$ with the correct coarse space, the rest of the statements in \cite[Lemmas 3.2.1.e and 3.2.2.b]{SantiThesis} hold. 
    
    However, under the hypothesis that $\ex' = \ex/g,\ey' = \ey/g$ and $\ez' = \ez/g$ are coprime, the coarse space of $\X$ {\it is} isomorphic to $X_{g}$, as we will show below.
    Write $\ex = g\ex',\ \ey = g\ey',\ \ez = g\ez'$ with $g = \gcd(\ex,\ey,\ez)$. Let $N \coloneqq L/g$, and consider the $N$-Veronese subring $\RR^{(N)}$ of $\RR$. Note that $\deg(\x^{\ex'}) = \deg(\y^{\ey'}) = \deg(\z^{\ez'}) = N$, and consider the injective graded homomorphism
    \begin{equation}\label{eq:graded-hom}
    R[X,Y,Z]/(AX^g + BY^g + CZ^g) \to \RR^{(N)},
    \end{equation}
    given by $X \mapsto \x^{\ex'}, Y \mapsto y^{\ey'}, Z \mapsto \z^{\ez'}$ (the grading on the left-hand-side is the homogeneous grading $\deg(X) = \deg(Y) = \deg(Z) = 1$). We claim that this map is an isomorphism if and only if $\gcd(\ex',\ey') = \gcd(\ex',
    \ez') = \gcd(\ey',\ez') = 1$.
    To see this, note that a monomial $\x^i\y^j\z^k \in \RR^{(N)}$ if and only if $N$ divides
    \[
    \deg(\x^i\y^j\z^k) = iw_0 + jw_1 + kw_\infty.
    \]
    Noting that $w_0/N = 1/\ex',\ w_1/N = 1/\ey',\ w_\infty/N = 1/\ez'$, we conclude that this is the case only if 
    \begin{equation}\label{eq:sum-or-fractions}
        \dfrac{i}{\ex'} + \dfrac{j}{\ey'} + \dfrac{k}{\ez'} \in \ZZ.
    \end{equation}
    On the other hand, $\x^i\y^j\z^k \in R[X,Y,Z]$ if and only if
    \begin{equation} \label{eq:divisibility-conditions}
        \ex'\mid i, \quad \ey' \mid j, \quad \ez' \mid k.
    \end{equation}
    Thus, the surjectivity of (\ref{eq:graded-hom}) is equivalent to the implication: (\ref{eq:sum-or-fractions}) $\Longrightarrow$ (\ref{eq:divisibility-conditions}). This implication is always true under the pairwise coprimality assumption. Since $\Proj \RR \cong \Proj \RR^{(N)},$ the isomorphism follows.
\end{example}

\begin{example}\label{ex:mmn}
    Suppose $\ex = \ey$ and let $\gmn = \gcd(\ey,\ez)$. Provided $\ey > 1$, these exponents are not pairwise coprime, and so long as $\ez \nmid \ey$, neither are the weights. Thus this family of examples sits somewhere in between Examples \ref{ex:pairwisecoprime} and \ref{ex:pairwisecoprimeweights}, and if further $\ey \nmid \ez$ then the coarse space fails to be neatly described by a plane curve as in Example \ref{ex:coarseplanecurve}. In this case, the formulae \eqref{eq:GFE-euler-char} and \eqref{eq:GFE-coarse-euler-char} specialize to 
    \begin{align*}
        \chi(\X) &= 2\gmn - \ey\gmn + \frac{\ey\gmn}{\ez}\\
        \chi(X) &= 2\gmn - \ey\gmn + \ey.  
    \end{align*}
\end{example}

\bibliographystyle{alpha}
\bibliography{refs}

\end{document}